\newtheorem{prop}{Proposition}[section]
\newtheorem{theorem}[prop]{Theorem}
\newtheorem{corollary}[prop]{Corollary}
\newtheorem{lemma}[prop]{Lemma}
\newcommand{\diff}{\mathrm{d}}
\newcommand{\id}{\mathrm{id}}
\title{A quantitative fourth moment theorem in free probability theory}
\author{Guillaume C\'ebron \thanks{Guillaume C\'ebron was partially supported by the ERC advanced grant “non-commutative distributions in free
probability”, held by Roland Speicher.}\\IMT; UMR 5219 \\
Université de Toulouse; CNRS \\
UPS, F-31400 Toulouse, France \\
\texttt{guillaume.cebron@math.univ-toulouse.fr}}
\begin{document}

\maketitle

   \begin{abstract}A quantitative "fourth moment theorem" for any self-adjoint element in a homogeneous Wigner chaos is provided: the Wasserstein distance is controlled by the distance from the fourth moment to two. The proof uses the free counterpart of the Stein discrepancy. On the way, the free analogue of the WSH inequality is established.
      \end{abstract}

\section{Introduction}
\subsection{A quantitative fourth moment theorem}
Let $\mathcal{S}$ be a family of centered jointly semicircular variables in some tracial $W^*$-probability space $(\mathcal{A},\tau)$ such that the Hilbert space in $L^2(\mathcal{A},\tau)$ generated by $\mathcal{S}$ is separable. For any integer $n\geq 0$, let us denote by $\mathcal{P}_n$ the \emph{Wigner chaos} of order $n$, that is to say the (complex) Hilbert space in $L^2(\mathcal{A},\tau)$ generated by the set
$\{1_{\mathcal{A}}\}\cup\{S_1\cdots S_k:1\leq k\leq n,S_1,\ldots, S_k \in \mathcal{S}\}$. Let us denote by $\mathcal{H}_n$ the \emph{homogeneous Wigner chaos} of order $n$, defined by
$$\mathcal{H}_n:=\mathcal{P}_n\ominus \mathcal{P}_{n-1}\ \ (=\mathcal{P}_n\cap \mathcal{P}_{n-1}^{\perp}).$$
In this article, the following quantitative bound of the $2$-Wasserstein distance $W_2$ from an element of the  $n$-th homogeneous chaos to the semicircular variable is provided.

\begin{theorem}[see Theorem~\ref{mainth}]\label{introth}Let $n\geq 2$. Let us consider a self-adjoint element $F=F^*$ of the $n$-th homogeneous chaos $\mathcal{H}_n$ such that $\tau(F^2)=1$, and let $S$ be a standard semicircular variable. We have $$W_2(F,S) \leq  n^{3/4}\left(\tau(F^4)-2\right)^{1/4}.$$
\end{theorem}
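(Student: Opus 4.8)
The plan is to run a free version of the Malliavin--Stein method. Write $\partial$ for the free difference quotient (the free gradient), $\delta=\partial^{*}$ for the free divergence, and $N=\delta\partial$ for the free Ornstein--Uhlenbeck number operator; the homogeneous Wigner chaoses are exactly its eigenspaces, with $N=n\,\id$ on $\mathcal{H}_{n}$, so that $F=\tfrac1n\,\delta\partial F$. The object carrying the estimate is the \emph{free Stein discrepancy} of $F$ relative to the standard semicircular $S$: a free Stein kernel of $F$ is an element $\tau_{F}$ of (the $L^{2}$-completion of) $W^{*}(F)\otimes W^{*}(F)^{\mathrm{op}}$ satisfying the free integration-by-parts identity $\tau\!\bigl(F\,g(F)\bigr)=(\tau\otimes\tau)\!\bigl[\tau_{F}\cdot\partial g(F)\bigr]$ for every polynomial $g$; the standard semicircular $S$ has Stein kernel $\tau_{S}=1\otimes1$, and one sets $\Sigma^{*}(F\Vert S)=\bigl\Vert\tau_{F}-1\otimes1\bigr\Vert_{2}$. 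Starting from $F=\tfrac1n\delta\partial F$ and using the module and coproduct properties of $\partial$, one produces an explicit admissible Stein kernel for a chaos element, built from a normalised self-contraction of $\partial F$; it is this explicit form that makes the estimate of the third step below tractable.

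The second, and central, ingredient is a free analogue of the WSH transport--Stein inequality, which I would prove in a form that for the present purpose reads
$$W_{2}(F,S)\ \le\ \Sigma^{*}(F\Vert S)$$
(or, at worst, the free WSI-type bound $W_{2}(F,S)^{2}\le\Sigma^{*}(F\Vert S)^{2}\log\!\bigl(1+\Phi^{*}(F\Vert S)/\Sigma^{*}(F\Vert S)^{2}\bigr)$, with $\Phi^{*}$ the free relative Fisher information, whose logarithmic factor is controlled for an element of a fixed chaos). I would establish it by interpolation along the free Mehler semigroup: since $F$ and $S$ are single self-adjoint variables, $W_{2}(F,S)$ is the one-dimensional quadratic Wasserstein distance between their spectral distributions, so one may use the monotone (quantile) coupling; letting $F_{t}$ have the distribution of $e^{-t}F$ freely convolved with an independent semicircular element of variance $1-e^{-2t}$ (so $F_{0}\sim F$ and $F_{\infty}\sim S$), one differentiates $t\mapsto W_{2}(F_{t},S)^{2}$, describes the flow through the associated free Fokker--Planck/continuity equation, and uses the integration-by-parts identity above to bound the $t$-derivative by $\bigl\Vert\tau_{F}-1\otimes1\bigr\Vert_{2}$; integrating over $t\in(0,\infty)$ gives the inequality.

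The third ingredient is the estimate of the Stein discrepancy of a chaos element,
$$\Sigma^{*}(F\Vert S)^{2}\ \le\ n^{3/2}\,\bigl(\tau(F^{4})-2\bigr)^{1/2}.$$
Expanding $\bigl\Vert\tau_{F}-1\otimes1\bigr\Vert_{2}^{2}$ with the explicit Stein kernel and the free product (diagram) formula for $F=F^{*}\in\mathcal{H}_{n}$ writes it as a sum over $p\in\{1,\dots,n-1\}$, with weights polynomial in $p$ and $n$, of the squared norms of the non-trivial self-contractions of the kernel of $F$; the same product formula applied to $F^{2}=F\cdot F$ identifies $\tau(F^{4})-2=\tau\!\bigl((F^{2}-\tau(F^{2}))^{2}\bigr)-1$ with another such weighted sum over the \emph{same} contractions. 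Using that the contractions are norm-contractive (this is where $\tau(F^{2})=1$ is needed), together with Cauchy--Schwarz and a comparison of the two families of weights, one obtains the displayed bound: the square root -- hence the exponent $1/4$ in the theorem -- comes precisely from the Cauchy--Schwarz step, and the power $n^{3/2}$ from the weight comparison together with the factor $\tfrac1n$ carried by $N^{-1}$. Combining the three steps, $W_{2}(F,S)^{2}\le\Sigma^{*}(F\Vert S)^{2}\le n^{3/2}\bigl(\tau(F^{4})-2\bigr)^{1/2}$, which is the claim.

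I expect the main obstacle to be the free WSH inequality of the second step. Classical transport--Stein inequalities rest on an HWI/Otto--Villani argument or on a delicate semigroup interpolation, and the subtle point in the free setting is to make a non-commutative Stein kernel -- an element of a bimodule over $W^{*}(F)$ -- control the genuinely scalar quantity $W_{2}$ between spectral measures; concretely one must quantify the smoothing effect of the free difference quotient so that its $L^{2}$-defect from $1\otimes1$ dominates the transport cost. The contraction bookkeeping of the third step is heavier but essentially routine once the free product formula and the explicit chaos Stein kernel are available.
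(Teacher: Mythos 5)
Your overall architecture --- (i) bound $W_2(F,S)$ by the free Stein discrepancy via semigroup interpolation, (ii) exhibit an explicit Stein kernel for a chaos element via free Malliavin calculus, (iii) control its $L^2$-distance to $1\otimes 1$ by contractions and the fourth moment --- is exactly the paper's. Step (i) in particular is carried out in the paper just as you sketch it: the Biane--Voiculescu bound $\frac{\diff^+}{\diff t}W_2(F,F_t)\leq \Phi^*(F_t|S)^{1/2}$ along the Mehler flow, combined with the new estimate $\Phi^*(F_t|S)^{1/2}\leq \frac{e^{-2t}}{\sqrt{1-e^{-2t}}}\Sigma^*(F|S)$ (whose prefactor integrates to exactly $1$), gives $W_2(F,S)\leq\Sigma^*(F|S)$ with no logarithmic loss.

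The genuine gap is in your step (iii), and it is the choice of Stein kernel. Starting from $F=\tfrac1n\delta\partial F$, the kernel you describe --- ``a normalised self-contraction of $\partial F$'', i.e.\ $\tfrac1n\int_{\mathbb{R}_+}(\nabla_tF)\cdot(\nabla_tF)^*\,\diff t$, with the $\tfrac1n$ coming from $N^{-1}$ --- is precisely the kernel of Kemp--Nourdin--Peccati--Speicher and of Bourguin--Campese, and it is known (Remark 3.9 of \cite{BC2017}) that its squared distance to $1_{\mathcal{A}}\otimes 1_{\mathcal{A}}$ is \emph{not} controlled by $\tau(F^4)-2$ for general self-adjoint (merely mirror-symmetric) $F\in\mathcal{H}_n$; the fourth-moment bound for that kernel holds only on the strictly smaller class of fully symmetric kernels, which is exactly why \cite{BC2017} has that hypothesis. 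The step that makes the theorem work for all self-adjoint $F$ is to replace this kernel by the asymmetric one $\int_{\mathbb{R}_+}(\id\otimes\tau(\nabla_tF))\cdot(\nabla_tF)^*\,\diff t$, verify (via $\tau(AB)=\tau(\int(\id\otimes\tau(\nabla_tA))(\tau\otimes\id(\nabla_tB))\diff t)$ and the chain rule) that it is still a Stein kernel, and only then run the contraction bookkeeping; the deviation of \emph{this} kernel from $1_{\mathcal{A}}\otimes 1_{\mathcal{A}}$ is bounded by $n^{3/2}(\tau(F^4)-2)^{1/2}$, the first power (rather than the square) of the contraction norms arising from the pairing-integral (Janson-type) inequality and accounting for the exponent $1/4$ in the final statement. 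Without this replacement your third step fails on non-fully-symmetric elements, so the proposal as written proves at best the Bourguin--Campese result, not the theorem.
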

In \cite{KNPS2012}, Kemp, Nourdin, Peccati, and Speicher showed that, for a sequence of such variables in a fixed homogeneous chaos, convergence of the fourth moment controls convergence in distribution towards the semicircular law. However, they provided a quantitative bound in terms of the fourth moment only in the special case of Wigner chaos of order two: they proved that, for $F$ in the $2$-th homogeneous chaos,
$$d_{\mathcal{C}_2}(F,S)  \leq \frac{1}{2}\sqrt{\frac{3}{4}}\left(\tau(F^4)-2\right)^{1/2}$$
for an ad hoc distance $d_{\mathcal{C}_2}$ which is in fact weaker than the Wassertein distance $W_2$ (see Lemma~\ref{prop:W}). Whether a similar fourth moment bound holds for chaoses of higher orders, as in the commutative setting (see Nualart and Peccati~\cite{NP2005} and Nourdin and Peccati~\cite{NP2009}), is a question which has first been investigated by Bourguin and Campese in~\cite{BC2017}. They provided the following bound
$$d_{\mathcal{C}_2}(F,S)  \leq \sqrt{C_n}\left(\tau(F^4)-2\right)^{1/2}$$
for variables in a homogeneous Wigner chaos of order $n$ which are \emph{fully symmetric} (see Section~\ref{Fully}), and for a constant $C_n$ which grows asymptotically linearly with $n$.
Theorem~\ref{introth} completes their result: the assumption of full symmetry is removed, and the distance $d_{\mathcal{C}_2}$ is replaced by the Wasserstein distance $W_2$. The proof of Theorem~\ref{introth} uses the notions of free Stein kernel and free Stein discrepancy introduced by Fathi and Nelson in~\cite{FN2016} (see Section~\ref{Sec:Stein}), and the free Malliavin calculus as defined by Biane and Speicher in~\cite{BS1998} (see Section~\ref{Sec:Wigner}).
One can consult \cite{BC2017} and the reference therein to get a larger overview about the classical and free results related to the "fourth moment theorem".


\subsection{Notation}A \emph{non-commutative probability space} $(\mathcal{A},\tau)$ is a unital $*$-algebra with a linear functional $\tau:\mathcal{A}\to \mathbb{C}$ such that $\tau(1_{\mathcal{A}})=1$ and $\tau(A^*A)\geq 0$ for all $A\in \mathcal{A}$. Let $\left(\mathcal{A},\tau\right)$ be a \emph{tracial $W^*$-probability space}, that is to say a non-commutative probability space such that $\mathcal{A}$ is a von Neumann algebra, and $\tau$ is a faithful normal tracial state. For all $A_1,\ldots,A_n\in \mathcal{A}$,  we denote by $W^*(A_1,\ldots,A_n)$ the von Neumann subalgebra of $\mathcal{A}$ generated by $A_1,\ldots,A_n$.

Let $\mathcal{A}_1$ and $\mathcal{A}_2$ be two von Neumann subalgebras of $\mathcal{A}$. These algebras are called \emph{free} if, for all $n\in \mathbb{N}$, and all indices $i_1 \neq i_2 \neq \ldots \neq i_n$, whenever $A_j \in \mathcal{A}_{i_j}$ and $\tau(A_j) = 0$ for all $1\leq j \leq n$, we have $\tau(A_1\cdots A_n) = 0$. We say that a family $A_1,\ldots, A_n \in \mathcal{A}$ is \emph{free} from another family $B_1,\ldots, B_m \in \mathcal{A}$ if the algebras $W^*(A_1,\ldots, A_n)$ and $W^*(B_1,\ldots, B_n)$ are free.

In this article, $H_1\otimes H_2$ denotes the \emph{algebraic tensor product} of $H_1$ and $H_2$ if $H_1$ and $H_2$ are only vector spaces, and $H_1\otimes H_2$ denotes the \emph{Hilbert space tensor product} of $H_1$ and $H_2$ whenever $H_1$ and $H_2$ are two Hilbert spaces.

We denote by $L^2(\mathcal{A},\tau)$ the Hilbert space given by the completion of $\mathcal{A}$ with respect to the inner product $\langle A,B\rangle_{L^2(\mathcal{A},\tau)}=\tau(A^*B)$ (remark that if $\mathcal{A}$ is a subspace of bounded operators, $L^2(\mathcal{A},\tau)$ can be identified with a space of operators affiliated with $\mathcal{A}$). If $\mathcal{B}$ is a $W^*$-subalgebra of $\mathcal{A}$, we denote by $L^2(\mathcal{B},\tau)$ the Hilbert space generated by $\mathcal{B}$ in $L^2(\mathcal{A},\tau)$. Moreover, for all $A \in \mathcal{A}$, we denote by $\tau(B|A)$ the orthogonal projection of an element $B\in L^2(\mathcal{A},\tau)$ onto $L^2(W^*(A),\tau)$ and call it the \emph{conditional expectation} of $B$ given $A$.

Finally, given a self-adjoint element $A=A^*$ of $\mathcal{A}$, there exists a unique probability measure $\mu$ on $\mathbb{R}$, called the \emph{distribution} of $A$, such that, for all bounded Borel function $h$ on $\mathbb{R}$, we have
$$\tau(h(A))=\int_\mathbb{R}h\diff \mu.$$

\subsection{Organisation of the paper}

In Section~\ref{Sec:Stein}, the free entropy, the free Stein kernel and the free Stein discrepancy are introduced. Proposition~\ref{propWS} states that the Wasserstein distance is bounded by the Stein discrepancy, and Proposition~\ref{freeWSH} is the free analogue of the WSH inequality, that is to say an improvement of the free Talagrand inequality involving the Wasserstein distance, the Stein discrepancy and the free entropy.

In Section~\ref{Sec:Wigner}, the Wigner chaoses and the Malliavin derivative are introduced. Corollary~\ref{corStein} exhibits a Stein kernel for any element of the Wigner chaos, and Proposition~\ref{propFourthmoment} states that the free Stein discrepancy of an element in a homogeneous chaos is controlled by the distance from the fourth moment to $2$. Finally, our quantitative "fourth moment theorem" is summed up in Theorem~\ref{mainth}.

\section{Functional inequalities involving the Stein discrepancy}\label{Sec:Stein}
The \emph{noncommutative derivative}, or \emph{free difference quotient}, is the linear map $\partial$ from the set of polynomial $\mathbb{C}[X]$ to its tensor product $\mathbb{C}[X]\otimes \mathbb{C}[X]$ defined by $\partial X^n=\sum_{k=1}^n X^{k-1}\otimes X^{n-k}.$ A self-adjoint element $S$ of $(\mathcal{A},\tau)$ is called a \emph{standard semicircular variable} if the distribution of $S$ has density $\sqrt{4-t^2}/2\pi$ on $[-2,2]$. Equivalently, a self-adjoint element $S$ of $(\mathcal{A},\tau)$ is a standard semicircular variable if it satisfies the following \emph{Schwinger-Dyson equation}: for all polynomials $P\in \mathbb{C}[X]$, we have
$$\langle S,P(S) \rangle_{L^2(\mathcal{A},\tau)}=\langle 1_{\mathcal{A}}\otimes 1_{\mathcal{A}},\partial P(S) \rangle_{L^2(\mathcal{A},\tau)\otimes L^2(\mathcal{A},\tau)}.$$
In the two following sections, we present two different ways of deforming this equation. One leads to the definition of the free Fisher information, due to Voiculescu \cite{V1998}. The other leads to the definition of the free Stein discrepancy, due to Fathi and Nelson \cite{FN2016}.
\subsection{Free Fisher information and free entropy}
Let $F$ be a self-adjoint element $F$ of $(\mathcal{A},\tau)$. The \emph{conjugate variable} of $F$, if it exists, is the element $\xi\in L^2(W^*(F),\tau)$ such that, for all $P\in \mathbb{C}[X]$,
$$\langle \xi,P(F) \rangle_{L^2(\mathcal{A},\tau)}=\langle 1_{\mathcal{A}}\otimes 1_{\mathcal{A}},\partial P(F) \rangle_{L^2(\mathcal{A},\tau)\otimes L^2(\mathcal{A},\tau)}.$$
The \emph{free Fisher information} of $F$ relative to $S$ is the constant
$$\Phi^*(F|S):=\|\xi-F\|^2_{L^2(\mathcal{A},\tau)}$$
if there exists a conjugate variable of $F$, and $+\infty$ if not. Even if $\Phi^*(F|S)=+\infty$, it is possible to perturb $F$ in order to get a finite free Fisher information. Without loss of generality, let us assume that there exists a standard semicircular variable $S$ such that $F$ and $S$ are free in $(\mathcal{A},\tau)$, and set
$F_t:=e^{-t}F+\sqrt{1-e^{-2t}}S.$
For all $t>0$, 
the free Fisher information of $F_t$ is finite. The \emph{free entropy} of $F$ relative to $S$ is the quantity
$$\chi^*(F|S):=\int_0^\infty \Phi^*(F_t|S)\diff t\ \ \in [0,+\infty].$$
Denoting by $\mu$ the distribution of $F$, we have also the explicit formula
$$\chi^*(F|S)=\frac{1}{2}\int_\mathbb{R}x^2\diff \mu(x)-\iint_{\mathbb{R}^2}\log |x-y|\diff \mu(x)\diff \mu(x)-\frac{3}{4}.$$
By definition, the Wasserstein distance $W_2(A,B)$ from $A$ to $B$ is given by
$$W_2(A,B)^2=\inf_{\pi\in \Pi(A,B)}\int_{\mathbb{R}^2}|x-y|^2 d\pi(x,y),$$
where $\Pi(A,B)$ denotes the set of probability measures on $\mathbb{R}^2$ with marginals the distributions of $A$ and of $B$.
The free Fisher information and the free entropy dominate the Wasserstein distance of the distribution of $F$ to a semicircular variable $S$ thanks to the following free analogues of Talagrand's inequality and log-Sobolev inequality.
\begin{theorem}[Theorem 2.8 of \cite{BV2001} and Section 7.2. of \cite{BS2001}]For all self-adjoint element $F$ of $(\mathcal{A},\tau)$, we have
$$W_2(F,S)^2\leq 2\ \chi^*(F|S)\leq \Phi^*(F|S).$$
\end{theorem}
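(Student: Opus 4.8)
The plan is to deduce both inequalities from one fact: the functional $E(\mu):=\chi^*(\mu|S)$ on probability measures on $\mathbb{R}$, which by the stated explicit formula equals $\tfrac12\int x^2\,\diff\mu-\iint\log|x-y|\,\diff\mu\,\diff\mu-\tfrac34$, is \emph{$1$-displacement convex}: along every $W_2$-geodesic $(\mu_s)_{s\in[0,1]}$ one has $\tfrac{\diff^2}{\diff s^2}E(\mu_s)\ge W_2(\mu_0,\mu_1)^2$. Two preliminary remarks: $E\ge 0$ since $E(\mu)=\int_0^\infty\Phi^*(\mu_t|S)\,\diff t$, and $E(\sigma)=0$ for the standard semicircular law $\sigma$ by direct evaluation, so $\sigma$ is a global minimiser of $E$. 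For the convexity, decompose $E=E_1+E_2-\tfrac34$ with $E_1(\mu)=\tfrac12\int x^2\,\diff\mu$ and $E_2(\mu)=-\iint\log|x-y|\,\diff\mu\,\diff\mu$, and recall that a $W_2$-geodesic on $\mathbb{R}$ has the form $\mu_s=\big((1-s)\,\id+sT\big)_\#\mu_0$ with $T$ the nondecreasing monotone transport map from $\mu_0$ to $\mu_1$. Then $\tfrac{\diff^2}{\diff s^2}E_1(\mu_s)=\int(T(x)-x)^2\,\diff\mu_0(x)=W_2(\mu_0,\mu_1)^2$, and for $x>y$ the quantity $(1-s)(x-y)+s(T(x)-T(y))$ stays strictly positive by monotonicity of $T$, so $s\mapsto -\log\big((1-s)(x-y)+s(T(x)-T(y))\big)$ is convex and hence $s\mapsto E_2(\mu_s)$ is convex.

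The free Talagrand inequality $W_2(F,S)^2\le 2\chi^*(F|S)$ now follows by running the geodesic from $\mu_0=\sigma$ to $\mu_1=$ the law of $F$. With $g(s):=E(\mu_s)$ and $w:=W_2(F,S)$, we have $g(0)=0$, $g'(0^+)\ge 0$ because $\sigma$ minimises $E$, and $g''\ge w^2$; integrating twice gives $\chi^*(F|S)=g(1)\ge g(0)+g'(0^+)+\tfrac12 w^2\ge\tfrac12 W_2(F,S)^2$.

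The free logarithmic Sobolev inequality $2\chi^*(F|S)\le\Phi^*(F|S)$ follows from the free HWI inequality, itself a consequence of the same convexity (we may assume $\Phi^*(F|S)<\infty$, else there is nothing to prove). The first variation of $E$ is $\tfrac{\delta E}{\delta\mu}(x)=\tfrac12 x^2-2\int\log|x-y|\,\diff\mu(y)$, whose derivative in $x$ is $x-\xi(x)$ with $\xi(x)=2\,\mathrm{p.v.}\!\int(x-y)^{-1}\,\diff\mu(y)$; a one-line moment computation from the defining identity $\langle\xi,P(F)\rangle_{L^2(\mathcal{A},\tau)}=\langle 1_\mathcal{A}\otimes 1_\mathcal{A},\partial P(F)\rangle$ identifies $\xi(F)$ as the conjugate variable of $F$. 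Hence the Wasserstein gradient of $E$ satisfies $|\nabla_{W_2}E(\mu)|^2=\int(x-\xi(x))^2\,\diff\mu(x)=\|\xi(F)-F\|_{L^2(\mathcal{A},\tau)}^2=\Phi^*(F|S)$. Running once more the geodesic from $\mu_0=\sigma$ to $\mu_1=$ the law of $F$, convexity of $g$ gives $g(0)\ge g(1)-g'(1^-)+\tfrac12 w^2$, and since $g'(1^-)\le|\nabla_{W_2}E(\mu_1)|\,w=\sqrt{\Phi^*(F|S)}\,w$ by Cauchy--Schwarz in the tangent space, we obtain the free HWI inequality $\chi^*(F|S)\le W_2(F,S)\sqrt{\Phi^*(F|S)}-\tfrac12 W_2(F,S)^2$. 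As $\max_{a\ge 0}\big(a\sqrt{\Phi^*(F|S)}-\tfrac12 a^2\big)=\tfrac12\Phi^*(F|S)$, this yields $2\chi^*(F|S)\le\Phi^*(F|S)$.

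The main obstacle is the rigour of the two displacement-convexity steps, the difficulties being the logarithmic singularity of $E_2$ and the possible non-compactness of supports. One reduces to $\chi^*(F|S)<\infty$ (both inequalities being trivial otherwise) and must then verify, along the relevant geodesic, that $\int x^2\,\diff\mu_s<\infty$, so that $\iint\log|x-y|\,\diff\mu_s\,\diff\mu_s$ is finite and the second-derivative bounds are licit; that $\sigma$ is a genuine minimiser so that $g'(0^+)\ge 0$; and that the first-variation computation of $|\nabla_{W_2}E|^2$ is valid, including the principal value. Two alternatives worth recording: for the Talagrand inequality one may instead, following Biane--Voiculescu, transfer the classical Talagrand inequality for the Gaussian law on $N\times N$ Hermitian matrices to the empirical spectral distribution of a GUE matrix and let $N\to\infty$, using the large-deviation principle whose rate function is (essentially) $-\chi^*$; for the logarithmic Sobolev inequality one may instead, following Biane--Speicher, prove the exponential decay $\Phi^*(F_t|S)\le e^{-2t}\Phi^*(F|S)$ of the free Fisher information along $F_t=e^{-t}F+\sqrt{1-e^{-2t}}S$ by a free de Bruijn identity together with a noncommutative Cauchy--Schwarz inequality for the free difference quotient $\partial$, and then integrate using $\chi^*(F|S)=\int_0^\infty\Phi^*(F_t|S)\,\diff t$.
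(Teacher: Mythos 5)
Your argument is sound in outline, but it takes a genuinely different route from the paper, which does not prove this theorem at all: it imports the Talagrand inequality from Theorem 2.8 of \cite{BV2001} (whose original proof approximates the semicircular law by the empirical spectral distribution of a GUE matrix and transfers the classical Gaussian Talagrand inequality through the large-deviation rate function) and the log-Sobolev inequality from \cite{BS2001} (whose proof runs the free Ornstein--Uhlenbeck flow $F_t=e^{-t}F+\sqrt{1-e^{-2t}}S$, establishes exponential decay of $\Phi^*(F_t|S)$, and integrates via the free de Bruijn formula) --- precisely the two ``alternatives'' you record at the end. What you propose instead is the one-dimensional mass-transport proof: $1$-displacement convexity of $\mu\mapsto\frac12\int x^2\,\diff\mu-\iint\log|x-y|\,\diff\mu(x)\,\diff\mu(y)-\frac34$ along monotone-rearrangement geodesics, yielding the free HWI inequality $\chi^*\le W_2\sqrt{\Phi^*}-\frac12W_2^2$ and hence both stated inequalities at once. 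This is a legitimate and known strategy (it is carried out rigorously by Ledoux and by Ledoux--Popescu, and by Hiai--Petz--Ueda), and its advantage is that it is self-contained, purely one-dimensional, avoids random matrix approximation, and produces the stronger HWI statement as a by-product; its cost is exactly the technical burden you identify, namely justifying the second-derivative computations through the logarithmic singularity, the finiteness of the logarithmic energy along the geodesic, the identification of the conjugate variable with $2\,\mathrm{p.v.}\!\int(x-y)^{-1}\diff\mu(y)$ (which requires $\mu$ to have an $L^3$ density, guaranteed when $\Phi^*(F|S)<\infty$), and the endpoint bound $g'(1^-)\le\sqrt{\Phi^*(F|S)}\,W_2$. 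Since you have reduced to the case $\chi^*(F|S)<\infty$ and $\Phi^*(F|S)<\infty$ and the endpoint measures are compactly supported (as $F\in\mathcal{A}$ is bounded), these verifications go through, so the sketch is acceptable as a proof of a result the paper itself only cites.
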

Note that the proof of the first inequality above uses the following lemma, where $\frac{\diff^+}{\diff t}$ stands for the upper right derivative.
\begin{lemma}[Lemma 2.7 of \cite{BV2001}]\label{lemmaWI}For all self-adjoint element $F$ of $(\mathcal{A},\tau)$, we have
$$\frac{\diff^+}{\diff t}W_2(F,F_t)\leq (\Phi^*(F_t|S))^{1/2}.$$
\end{lemma}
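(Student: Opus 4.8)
The plan is to reduce the statement to a bound on the $W_2$-metric speed of the curve $s\mapsto\mu_s$, where $\mu_s$ denotes the distribution of $F_s$. Since $W_2$ is a metric, the triangle inequality gives $W_2(F,F_{t+h})-W_2(F,F_t)\le W_2(\mu_t,\mu_{t+h})$ for $h>0$, hence
$$\frac{\diff^+}{\diff t}W_2(F,F_t)\le\limsup_{h\to0^+}\frac1h\,W_2(\mu_t,\mu_{t+h}),$$
and it suffices to bound this last quantity by $(\Phi^*(F_t|S))^{1/2}$; that is, to produce a transport map from $\mu_t$ to $\mu_{t+h}$ of cost $h^2\,\Phi^*(F_t|S)+o(h^2)$.

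To identify the right transport map I would first recall that for $t>0$ the conjugate variable $\xi_t$ of $F_t$ exists in $L^2(W^*(F_t),\tau)$ (equivalently, $\Phi^*(F_t|S)<\infty$); since $F_t$ is self-adjoint, $W^*(F_t)$ is abelian and $L^2(W^*(F_t),\tau)$ identifies with $L^2(\mathbb{R},\mu_t)$, so $\xi_t=\mathcal J_t(F_t)$ for a function $\mathcal J_t\in L^2(\mu_t)$ — the free score function $\mathcal J_t(x)=2\,\mathrm{p.v.}\!\int(x-y)^{-1}\diff\mu_t(y)$, read off from the Cauchy transform of $\mu_t$. The free Ornstein--Uhlenbeck flow $s\mapsto\mu_s$ then solves the free Fokker--Planck equation: combining Voiculescu's complex Burgers equation $\partial_s G_s+G_s\,\partial_z G_s=0$ for the Cauchy transform of $F+\sqrt sS$ with the dilation/time-change identity $\mu_s=(D_{e^{-s}})_*\big(\text{law of }F+\sqrt{e^{2s}-1}\,S\big)$ (where $D_c(x)=cx$), one obtains, in the distributional sense,
$$\partial_s\mu_s=\partial_x\big((\id-\mathcal J_s)\,\mu_s\big),$$
that is, the continuity equation $\partial_s\mu_s+\partial_x(v_s\mu_s)=0$ with velocity field $v_s=\mathcal J_s-\id$. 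The identity that makes everything fit is
$$\|v_t\|_{L^2(\mu_t)}^2=\int_{\mathbb{R}}(\mathcal J_t(x)-x)^2\,\diff\mu_t(x)=\|\xi_t-F_t\|_{L^2(\mathcal A,\tau)}^2=\Phi^*(F_t|S).$$

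With this in hand I would conclude by the standard fact that a curve in $\mathcal P_2(\mathbb{R})$ solving a continuity equation with velocity field in $L^2(\mu_s)$ has $W_2$-metric speed at most $\|v_s\|_{L^2(\mu_s)}$; self-containedly, one pushes $\mu_t$ forward by $T_h:=\id+h\,v_t$, notes that the associated graph coupling has cost $\int|h\,v_t(x)|^2\,\diff\mu_t(x)=h^2\,\Phi^*(F_t|S)$, and uses the continuity equation to see that $(T_h)_*\mu_t$ and $\mu_{t+h}$ agree to first order in $h$, so that $W_2(\mu_t,\mu_{t+h})\le h\,(\Phi^*(F_t|S))^{1/2}+o(h)$. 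Together with the first display this gives $\frac{\diff^+}{\diff t}W_2(F,F_t)\le(\Phi^*(F_t|S))^{1/2}$.

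The main obstacle is regularity. One knows only (Biane) that $\mu_s$ has a density for $s>0$, a density that may vanish and that behaves like a square root at the edge of its compact support, so making rigorous sense of $\mathcal J_s$, of the distributional Fokker--Planck equation, and of the transport estimate near $\partial\,\mathrm{supp}\,\mu_s$ is delicate. What rescues the argument is exactly the hypothesis $\Phi^*(F_s|S)<\infty$ for $s>0$: it says precisely that $v_s=\mathcal J_s-\id\in L^2(\mu_s)$, which is the integrability required both for the continuity equation and for the metric-speed inequality. One must also check that the $o(h)$ above is uniform in a neighbourhood of $t$ — equivalently, that $s\mapsto\mu_s$ is an absolutely continuous curve of $(\mathcal P_2(\mathbb{R}),W_2)$ on every compact subinterval of $(0,\infty)$, which follows once $\int(\Phi^*(F_s|S))^{1/2}\,\diff s<\infty$ there.
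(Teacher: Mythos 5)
The paper does not prove this lemma; it is quoted verbatim from Biane--Voiculescu \cite{BV2001}, so the only meaningful comparison is with the proof in that reference. Your route is the same one: identify the velocity field of the free Ornstein--Uhlenbeck flow from the complex Burgers equation, observe that it equals $\mathcal J_t-\id$ where $\mathcal J_t$ is the conjugate-variable function of $\mu_t$, note that its $L^2(\mu_t)$-norm is exactly $\Phi^*(F_t|S)^{1/2}$, and bound the metric speed of $t\mapsto\mu_t$ by that norm. All of these identities are correct, including the normalisation $\mathcal J_t(x)=2\,\mathrm{p.v.}\!\int(x-y)^{-1}\diff\mu_t(y)$ and the sign in the continuity equation $\partial_s\mu_s=\partial_x\bigl((\id-\mathcal J_s)\mu_s\bigr)$, and the reduction via the triangle inequality is fine.

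The one step that is not a proof as written is the final implication: ``$(T_h)_*\mu_t$ and $\mu_{t+h}$ agree to first order in $h$, so $W_2(\mu_t,\mu_{t+h})\le h\,\Phi^*(F_t|S)^{1/2}+o(h)$.'' First-order agreement against each fixed test function is a distributional statement and does not by itself control a Wasserstein distance, which involves a supremum over couplings (equivalently, over Lipschitz or Kantorovich potentials); two measures can be $o(h)$-close weakly while being order-$h$ apart in $W_2$. You essentially acknowledge this in your last paragraph, and the correct repairs are exactly the ones you gesture at: either invoke the metric-derivative theorem for absolutely continuous curves in $(\mathcal P_2(\mathbb R),W_2)$ solving a continuity equation with velocity in $L^2(\mu_s)$ (Ambrosio--Gigli--Savar\'e), after checking local absolute continuity from $\int\Phi^*(F_s|S)^{1/2}\diff s<\infty$ (which holds since $\Phi^*(F_s|S)^{1/2}\lesssim(1-e^{-2s})^{-1/2}$ by Voiculescu's estimate), or do what \cite{BV2001} actually do, namely exploit Biane's regularity of free convolution with a semicircular (continuous density, analytic and positive on a finite union of intervals) to construct explicit near-optimal monotone transport maps between $\mu_t$ and $\mu_{t+h}$ and estimate their cost directly. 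With either of these inserted, your argument is complete; without one of them, the key quantitative step is asserted rather than proved.
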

Finally, let us state the free version of the de Bruijn’s formula.
\begin{lemma}[Lemma 2.1 of \cite{FN2016}]\label{deBruijn}For all self-adjoint element $F$ of $(\mathcal{A},\tau)$, we have
$$\frac{\diff}{\diff t}\chi^*(F_t|S)= -\Phi^*(F_t|S).$$
\end{lemma}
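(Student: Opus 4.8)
The plan is to prove the formula by showing that the distributions $(\mu_{F_t})_{t\ge 0}$ evolve under a \emph{free Ornstein--Uhlenbeck semigroup}; this turns $\chi^*(F_t|S)$ into a tail integral of $\Phi^*$, and the statement then drops out of the fundamental theorem of calculus. Two preliminary observations make the argument clean. First, both $\Phi^*(\,\cdot\,|S)$ and hence $\chi^*(\,\cdot\,|S)$ depend only on the $*$-distribution of their argument, since the conjugate variable $\xi$ is characterised by an identity ranging over polynomials in $F$ alone. Second, we may therefore freely enlarge $(\mathcal{A},\tau)$ by a free product so that it carries a standard semicircular $S'$ free from $W^*(F,S)$, without affecting any quantity in the statement.

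The heart of the proof is the semigroup identity: for all $s,t\ge 0$, setting $(F_t)_s:=e^{-s}F_t+\sqrt{1-e^{-2s}}\,S'$, one has $\mu_{(F_t)_s}=\mu_{F_{t+s}}$. Expanding,
$$(F_t)_s=e^{-(t+s)}F+\Big(e^{-s}\sqrt{1-e^{-2t}}\,S+\sqrt{1-e^{-2s}}\,S'\Big).$$
Since $S$ is free from $F$ and $S'$ is free from $W^*(F,S)$, the family $\{S,S'\}$ is free from $F$; consequently the bracketed term is a semicircular element, free from $F$, with variance $e^{-2s}(1-e^{-2t})+(1-e^{-2s})=1-e^{-2(t+s)}$. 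Hence $\big(F,\text{bracketed term}\big)$ has the same joint distribution as $\big(F,\sqrt{1-e^{-2(t+s)}}\,S\big)$, which gives the identity.

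Applying the definition of free entropy to $F_t$ (with $S'$ as the auxiliary semicircular), then the semigroup identity together with the distribution-invariance of $\Phi^*$, and finally the substitution $u=t+s$, we obtain
$$\chi^*(F_t|S)=\int_0^\infty\Phi^*\big((F_t)_s\,|\,S\big)\,\diff s=\int_0^\infty\Phi^*(F_{t+s}|S)\,\diff s=\int_t^\infty\Phi^*(F_u|S)\,\diff u.$$
For every $u>0$ the free Fisher information $\Phi^*(F_u|S)$ is finite (as recalled above) and the map $u\mapsto\Phi^*(F_u|S)$ is continuous on $(0,\infty)$; the formula follows by differentiating the last expression.

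The main obstacle is precisely this continuity input. The semigroup identity is a short, robust computation, but to conclude with a genuine derivative at each $t>0$ --- rather than merely an almost-everywhere statement obtained from local integrability of $u\mapsto\Phi^*(F_u|S)$ (which is itself immediate once one knows $\chi^*(F_t|S)<\infty$ for $t>0$) --- one must invoke the continuity of the free Fisher information along the free heat flow, a consequence of the regularity (in fact analyticity) of the density of $\mu_{F_u}$ for $u>0$. This is the one point where a result lying outside the present excerpt is needed.
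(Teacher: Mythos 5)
The paper does not prove this lemma; it is quoted verbatim from Fathi--Nelson \cite{FN2016}, so there is no in-paper argument to compare against. Your proof is the standard one and, given the definition of $\chi^*(F|S)$ adopted here (namely $\chi^*(F|S)=\int_0^\infty \Phi^*(F_t|S)\,\diff t$), it is essentially forced: the distribution-invariance of $\Phi^*(\cdot|S)$ is correct (the conjugate variable is characterised by moment identities), the semigroup computation $\mu_{(F_t)_s}=\mu_{F_{t+s}}$ is right (the bracketed term is semicircular of variance $1-e^{-2(t+s)}$ and free from $F$, and a joint distribution of free elements is determined by the marginals), and this yields $\chi^*(F_t|S)=\int_t^\infty\Phi^*(F_u|S)\,\diff u$. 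You are also right that the only genuine analytic content is the continuity of $u\mapsto\Phi^*(F_u|S)$ on $(0,\infty)$: lower semicontinuity of $\Phi^*$ under convergence in distribution only gives one-sided information, so full differentiability (rather than an a.e.\ statement or a right-derivative statement) does require the regularity of the free heat flow --- concretely, Voiculescu's formula $\Phi^*(G)=\tfrac{4\pi^2}{3}\int p^3$ together with Biane's results on the densities $p_u$ of free convolutions with semicirculars (uniformly bounded, compactly and uniformly supported since $F\in\mathcal{A}$ is bounded, and continuous in $u$), which gives continuity by dominated convergence. Flagging that input rather than proving it is acceptable for a quoted lemma of this kind; with that reference supplied, the argument is complete.
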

\subsection{Free Stein discrepancy}
A \emph{free Stein kernel} of $F$, if it exists, is an element $A\in L^2(\mathcal{A},\tau)\otimes L^2(\mathcal{A},\tau)$ such that, for all $P\in \mathbb{C}[X]$,
$$\langle F,P(F) \rangle_{L^2(\mathcal{A},\tau)}=\langle A,\partial P(F) \rangle_{L^2(\mathcal{A},\tau)\otimes L^2(\mathcal{A},\tau)}.$$
The \emph{free Stein discrepancy} $\Sigma^*(F|S)$ of $F$ relative to $S$ is defined as the constant
$$\Sigma^*(F|S)=\inf_A\|A-1_{\mathcal{A}}\otimes 1_{\mathcal{A}}\|_{L^2(\mathcal{A},\tau)\otimes L^2(\mathcal{A},\tau)},$$
where the infimum is taken over all the free Stein kernels $A$ of $F$. We have the following free analogue of the HSI inequality, which is an improvement of the free log-Sobolev inequality $2\ \chi^*(F|S)\leq \Phi^*(F|S)$, because $\log(1+x)\leq x$.
\begin{theorem}[Theorem 2.6 of \cite{FN2016}]\label{thHSI}For all self-adjoint element $F$ of $(\mathcal{A},\tau)$, we have
$$2\ \chi^*(F|S)\leq \Sigma^*(F|S)^2 \log\left(1+\frac{\Phi^*(F|S)}{\Sigma^*(F|S)^2}\right).$$
\end{theorem}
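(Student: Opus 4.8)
The plan is to transport to the free setting the proof of the (commutative) HSI inequality of Ledoux, Nourdin and Peccati. The main tool will be a Riccati-type differential inequality for the free Fisher information along the free Ornstein--Uhlenbeck semigroup, fed into the integral representation $\chi^*(F|S)=\int_0^\infty\Phi^*(F_t|S)\,\diff t$ of the free entropy (the integrated form of the free de~Bruijn formula, Lemma~\ref{deBruijn}). We may assume $\Phi^*(F|S)<+\infty$ and $0<\Sigma^*(F|S)<+\infty$: if $\Phi^*(F|S)=+\infty$ the right-hand side is infinite, while $\Sigma^*(F|S)=0$ forces $1_{\mathcal{A}}\otimes 1_{\mathcal{A}}$ to be a free Stein kernel of $F$, so that $F$ is semicircular by the Schwinger--Dyson equation and both sides vanish. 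Fix a free Stein kernel $A$ of $F$ with $\|A-1_{\mathcal{A}}\otimes 1_{\mathcal{A}}\|=\Sigma^*(F|S)$ (the infimum, being a distance to a closed affine subspace of $L^2(\mathcal{A},\tau)\otimes L^2(\mathcal{A},\tau)$, is attained), take $S$ free from $F$ as in the definition of the free entropy, and set $F_t:=e^{-t}F+\sqrt{1-e^{-2t}}\,S$ and $h(t):=\Phi^*(F_t|S)$; it is then enough to bound $\int_0^\infty h(t)\,\diff t$.

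The heart of the argument is the differential inequality, valid for every $t>0$,
$$\frac{\diff}{\diff t}\,\Phi^*(F_t|S)\ \leq\ -2\,\Phi^*(F_t|S)\ -\ \frac{2}{\Sigma^*(F|S)^{2}}\,\Phi^*(F_t|S)^{2}.$$
Write $\xi_t$ for the conjugate variable of $F_t$ relative to $S$, so $\Phi^*(F_t|S)=\|\xi_t-F_t\|^2$. I would obtain the inequality in three steps. First, using the free Malliavin calculus of Biane--Speicher and the explicit form of the free Ornstein--Uhlenbeck semigroup, compute the evolution of the free Fisher information as $\frac{\diff}{\diff t}\|\xi_t-F_t\|^2=-2\|\xi_t-F_t\|^2-2\|\partial(\xi_t-F_t)\|^2$, the first term reflecting the unit curvature of the semigroup and the second its dissipation. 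Second, subtracting the relations defining $\xi_t$ and a free Stein kernel $A_t$ of $F_t$ gives $\langle \xi_t-F_t,\,P(F_t)\rangle=-\langle A_t-1_{\mathcal{A}}\otimes 1_{\mathcal{A}},\,\partial P(F_t)\rangle$ for all $P\in\mathbb{C}[X]$; approximating $\xi_t-F_t$ in $L^2(W^*(F_t),\tau)$ by polynomials in $F_t$ (for $t>0$ the finiteness of $\Phi^*(F_t|S)$ together with the closability of $\partial$ ensures that $\xi_t-F_t$ belongs to the domain of $\partial$ and that the approximation passes to the limit), then applying Cauchy--Schwarz and taking the infimum over the free Stein kernels of $F_t$, one gets $\Phi^*(F_t|S)\leq \Sigma^*(F_t|S)\,\|\partial(\xi_t-F_t)\|$, hence $\|\partial(\xi_t-F_t)\|^2\geq \Phi^*(F_t|S)^2/\Sigma^*(F_t|S)^2$. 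Third, the free Stein discrepancy is non-increasing along the semigroup: transporting $A$ through the free Mehler-type formula produces the free Stein kernel $A_t=(E\otimes E)\bigl(e^{-2t}A+(1-e^{-2t})\,1_{\mathcal{A}}\otimes 1_{\mathcal{A}}\bigr)$ of $F_t$, with $E$ the ($\tau$-preserving, hence $L^2$-contractive) conditional expectation onto $W^*(F_t)$, so $\|A_t-1_{\mathcal{A}}\otimes 1_{\mathcal{A}}\|\leq e^{-2t}\|A-1_{\mathcal{A}}\otimes 1_{\mathcal{A}}\|$ and thus $\Sigma^*(F_t|S)\leq \Sigma^*(F|S)$. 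Combining the three steps — inserting the lower bound on $\|\partial(\xi_t-F_t)\|^2$ from the second and third into the evolution equation of the first — gives the displayed inequality.

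It remains to integrate. With $v(t):=1/h(t)$ the inequality becomes $v'(t)\geq 2v(t)+2/\Sigma^*(F|S)^2$, so $\bigl(e^{-2t}v(t)\bigr)'\geq 2e^{-2t}/\Sigma^*(F|S)^2$; integrating from $0$ to $t$ and using $v(0)=1/\Phi^*(F|S)$ gives
$$h(t)\ \leq\ \frac{\Phi^*(F|S)\,\Sigma^*(F|S)^2}{\bigl(\Sigma^*(F|S)^2+\Phi^*(F|S)\bigr)\,e^{2t}-\Phi^*(F|S)}.$$
The substitution $w=e^{2t}$ and a partial-fraction decomposition then yield $\int_0^\infty h(t)\,\diff t\leq \tfrac12\,\Sigma^*(F|S)^2\log\!\bigl(1+\Phi^*(F|S)/\Sigma^*(F|S)^2\bigr)$, which is the claim after multiplication by $2$.

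I expect the main difficulty to lie in the first two steps of the differential inequality. Making rigorous the evolution equation for $\Phi^*(F_t|S)$, and in particular the step of testing the Stein-kernel identity against the conjugate variable $\xi_t-F_t$ itself, requires careful control of the (unbounded) domain of the free difference quotient $\partial$ and of the approximation of $\xi_t-F_t$ by polynomials in $F_t$ — genuinely $L^2$-level matters rather than formal manipulations. The monotonicity of the free Stein discrepancy is likewise delicate: free Stein kernels live in the Hilbert tensor product $L^2(\mathcal{A},\tau)\otimes L^2(\mathcal{A},\tau)$, on which one must construct the transported kernel $A_t$ and verify that the relevant conditional expectation acts as an $L^2$-contraction — this is exactly where the structure of the free Ornstein--Uhlenbeck semigroup and the free Malliavin calculus of Biane--Speicher do the real work.
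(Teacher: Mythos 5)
The theorem you are asked to prove is quoted by the paper from \cite{FN2016} without proof, so the benchmark is the Fathi--Nelson argument, both ingredients of which are actually visible elsewhere in the paper. Your architecture (interpolate along $F_t=e^{-t}F+\sqrt{1-e^{-2t}}\,S$, use $\chi^*(F|S)=\int_0^\infty\Phi^*(F_t|S)\,\diff t$, bound the integrand) is the right one, your preliminary reductions are fine, your Step 3 is exactly Lemma~\ref{lemmaSigma}, and I have checked that your Riccati inequality does integrate to the stated bound. The genuine gap is Step 1: the evolution identity $\frac{\diff}{\diff t}\Phi^*(F_t|S)=-2\Phi^*(F_t|S)-2\|\partial(\xi_t-F_t)\|^2$ is asserted ``using the free Malliavin calculus and the explicit form of the free Ornstein--Uhlenbeck semigroup'' but never derived, and it is not a citable fact. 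It is the free analogue of the Bakry--\'Emery identity $I'=-2\int\Gamma_2(\log h_t)h_t\,\diff\gamma$, i.e.\ a \emph{second-order} statement about the free Fisher information along the flow; the literature this paper rests on only establishes the integrated, first-order de Bruijn formula (Lemma~\ref{deBruijn}). Even the differentiability of $t\mapsto\Phi^*(F_t|S)=\|(1-e^{-2t})^{-1/2}\tau(S|F_t)-F_t\|^2$ is unproven (one would have to differentiate the conditional expectation onto $W^*(F_t)$ in $t$), and identifying the derivative with $-2\Phi^*-2\|\partial(\xi_t-F_t)\|^2$ requires $\xi_t-F_t$ to lie in the domain of the closure of the free difference quotient --- a regularity property that does not follow from $\Phi^*(F_t|S)<\infty$ and that your Step 2 also silently needs. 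In short, the crux of your argument is a statement strictly harder than the theorem itself.

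The actual proof avoids all second-order analysis. It uses two soft pointwise bounds: first, $\Phi^*(F_t|S)\le e^{-2t}\Phi^*(F|S)$, because $\xi_t-F_t=e^{-t}\,\tau(\xi-F\,|\,F_t)$ (take the convex combination $\xi_t=\tau(e^{-2t}\cdot e^{t}\xi+(1-e^{-2t})(1-e^{-2t})^{-1/2}S\,|\,F_t)$ and note that the $S$-terms cancel against those of $F_t$) together with the $L^2$-contractivity of the conditional expectation; second, $\Phi^*(F_t|S)\le\frac{e^{-4t}}{1-e^{-2t}}\Sigma^*(F|S)^2$, which is precisely Lemma~\ref{lemmaIS}. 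One then splits $\int_0^\infty\Phi^*(F_t|S)\,\diff t$ at a point $u$, applies the first bound on $[0,u]$ and the second on $[u,\infty)$, and optimizes over $u$ (the optimum is at $1-e^{-2u}=\Sigma^{*2}/(\Phi^*+\Sigma^{*2})$), which yields exactly $\tfrac12\Sigma^*(F|S)^2\log(1+\Phi^*(F|S)/\Sigma^*(F|S)^2)$. This is the same ``integrate elementary pointwise bounds on $\Phi^*(F_t|S)$'' technique the paper itself uses to prove Proposition~\ref{freeWSH}. Replacing your Steps 1--2 by these two bounds closes the proof with no new analytic input.
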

Note that, if $A$ is a free Stein kernel of $F$, the orthogonal projection of $A$ into $L^2(W^*(F),\tau)\otimes L^2(W^*(F),\tau)$ is also a free Stein kernel of $F$. Furthermore, if $F$ is free from a self-adjoint element $G\in \mathcal{A}$, and $A\in L^2(W^*(F),\tau)\otimes L^2(W^*(F),\tau)$ is a free Stein kernel of $F$, the intertwining relation between $\partial$ and the conditional expectation found by Voiculescu in \cite{V2000} (see also \cite[Lemma 20]{MS2017}) gives the following fact: for all $P\in \mathbb{C}[X]$,
$$\langle F,P(F+G) \rangle_{L^2(\mathcal{A},\tau)}=\langle A,\partial P(F+G) \rangle_{L^2(\mathcal{A},\tau)\otimes L^2(\mathcal{A},\tau)}.$$
As a consequence, we get the following decay of the Stein discrepancy.
\begin{lemma}\label{lemmaSigma}Let $F$ be a self-adjoint element of $(\mathcal{A},\tau)$, $S$ be a standard semicircular variable free from $F$ and, for all $t\geq 0$, set $F_t:=e^{-t}F+\sqrt{1-e^{-2t}}S$.
Then, for all $t\geq 0$, the element $$e^{-2t}A+(1-e^{-2t})\cdot 1_{\mathcal{A}}\otimes 1_{\mathcal{A}}$$ is a free Stein kernel of $F_t$ whenever $A\in L^2(W^*(F),\tau)\otimes L^2(W^*(F),\tau)$ is a free Stein kernel of $F$. Moreover,
$$\Sigma^*(F_t|S)\leq e^{-2t} \Sigma^*(F|S).$$
\end{lemma}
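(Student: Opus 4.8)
The plan is to assemble a free Stein kernel of $F_t$ out of the given one of $F$ and the canonical one of $S$, using two elementary facts. The first is that free Stein kernels behave simply under real rescaling: if $B$ is a free Stein kernel of a self-adjoint element $G$ and $c\in\mathbb{R}$, then $c^2B$ is a free Stein kernel of $cG$. Indeed, writing $Q(X)=P(cX)$ one has $\partial Q=c\,(\partial P)(cX,cX)$ (check it on monomials, then extend by linearity), so $\langle cG,P(cG)\rangle=c\,\langle G,Q(G)\rangle=c\,\langle B,\partial Q(G)\rangle=c^2\langle B,\partial P(cG)\rangle=\langle c^2B,\partial P(cG)\rangle$. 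The second fact is that the Schwinger--Dyson equation says precisely that $1_{\mathcal{A}}\otimes 1_{\mathcal{A}}$ is a free Stein kernel of $S$.

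Applying the first fact with $c=e^{-t}$ and with $c=\sqrt{1-e^{-2t}}$ shows that $e^{-2t}A$ is a free Stein kernel of $e^{-t}F$ and that $(1-e^{-2t})\,1_{\mathcal{A}}\otimes 1_{\mathcal{A}}$ is a free Stein kernel of $\sqrt{1-e^{-2t}}S$. Since $F$ and $S$ are free, so are $e^{-t}F$ and $\sqrt{1-e^{-2t}}S$ (rescaling does not enlarge the generated von Neumann algebras); moreover $e^{-2t}A\in L^2(W^*(F),\tau)\otimes L^2(W^*(F),\tau)=L^2(W^*(e^{-t}F),\tau)\otimes L^2(W^*(e^{-t}F),\tau)$, and $(1-e^{-2t})\,1_{\mathcal{A}}\otimes 1_{\mathcal{A}}$ trivially lies in $L^2(W^*(\sqrt{1-e^{-2t}}S),\tau)\otimes L^2(W^*(\sqrt{1-e^{-2t}}S),\tau)$. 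Hence, splitting $\langle F_t,P(F_t)\rangle=\langle e^{-t}F,P(F_t)\rangle+\langle\sqrt{1-e^{-2t}}S,P(F_t)\rangle$ and applying the intertwining identity recalled above to each summand (with the other summand in the role of $G$), we get
$$\langle F_t,P(F_t)\rangle=\big\langle e^{-2t}A+(1-e^{-2t})\,1_{\mathcal{A}}\otimes 1_{\mathcal{A}},\ \partial P(F_t)\big\rangle$$
for every $P\in\mathbb{C}[X]$, which is exactly the claim that $e^{-2t}A+(1-e^{-2t})\,1_{\mathcal{A}}\otimes 1_{\mathcal{A}}$ is a free Stein kernel of $F_t$.

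For the inequality $\Sigma^*(F_t|S)\leq e^{-2t}\Sigma^*(F|S)$, I would first note that, since projecting a free Stein kernel of $F$ onto $L^2(W^*(F),\tau)\otimes L^2(W^*(F),\tau)$ again yields a free Stein kernel of $F$ and cannot increase its distance to $1_{\mathcal{A}}\otimes 1_{\mathcal{A}}$ (which belongs to this subspace), the infimum defining $\Sigma^*(F|S)$ is unchanged when restricted to kernels $A\in L^2(W^*(F),\tau)\otimes L^2(W^*(F),\tau)$. For any such $A$, the element $\tilde A:=e^{-2t}A+(1-e^{-2t})\,1_{\mathcal{A}}\otimes 1_{\mathcal{A}}$ is a free Stein kernel of $F_t$ by the previous paragraph and satisfies $\|\tilde A-1_{\mathcal{A}}\otimes 1_{\mathcal{A}}\|=e^{-2t}\,\|A-1_{\mathcal{A}}\otimes 1_{\mathcal{A}}\|$; taking the infimum over $A$ gives the bound (the case $\Sigma^*(F|S)=+\infty$ being vacuous).

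The argument is essentially bookkeeping around the intertwining relation, so I do not expect a genuine obstacle; the only points needing care are that the rescalings disturb neither the freeness of the two summands nor the subalgebra in which $A$ is required to live, and that the two invocations of the intertwining identity in the second paragraph are set up with the correct summand in the role of $G$.
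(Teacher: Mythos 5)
Your proof is correct and follows essentially the same route as the paper: the paper obtains the lemma as an immediate consequence of the intertwining relation stated just before it (splitting $\langle F_t,P(F_t)\rangle$ into the two summands, each carrying its rescaled kernel), together with the observation that the infimum defining $\Sigma^*$ may be restricted to kernels in $L^2(W^*(F),\tau)\otimes L^2(W^*(F),\tau)$. You have merely made explicit the rescaling identity $\partial(P(c\,\cdot))=c\,(\partial P)(c\,\cdot,c\,\cdot)$ and the bookkeeping that the paper leaves implicit.
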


\subsection{Wasserstein distance and free Stein discrepancy}
We have the following inequality, which is an extension of  \cite[Lemma 2.4]{FN2016} in the semicircular case.
\begin{lemma}\label{lemmaIS}Let $F$ be a self-adjoint element of $(\mathcal{A},\tau)$ and let $S$ be a standard semicircular variable free from $F$. For all $t\geq 0$, setting $F_t=e^{-t}F+\sqrt{1-e^{-2t}}S$, we have
$$\Phi^*(F_t|S)^{1/2}\leq \frac{e^{-2t}}{\sqrt{1-e^{-2t}}}\Sigma^*(F|S) .$$
\end{lemma}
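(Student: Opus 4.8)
The plan is to interpolate between the free Stein kernel of $F$ and that of $S$ along the Ornstein–Uhlenbeck flow $F_t$, and to compare the resulting quantity with the conjugate variable of $F_t$. Recall that for $t>0$ the free Fisher information $\Phi^*(F_t|S)$ is finite, so $F_t$ admits a conjugate variable $\xi_t$, and by definition $\Phi^*(F_t|S)=\|\xi_t-F_t\|^2_{L^2}$. The Schwinger–Dyson characterisation says that for all $P\in\mathbb{C}[X]$,
$$\langle \xi_t,P(F_t)\rangle_{L^2}=\langle 1_{\mathcal{A}}\otimes 1_{\mathcal{A}},\partial P(F_t)\rangle_{L^2\otimes L^2},$$
and combining with the free Stein kernel relation for $F_t$ one gets, for all $P$,
$$\langle \xi_t-F_t,P(F_t)\rangle_{L^2}=\langle (1_{\mathcal{A}}\otimes 1_{\mathcal{A}})-A_t,\partial P(F_t)\rangle_{L^2\otimes L^2},$$
where $A_t=e^{-2t}A+(1-e^{-2t})\cdot 1_{\mathcal{A}}\otimes 1_{\mathcal{A}}$ is the free Stein kernel of $F_t$ supplied by Lemma~\ref{lemmaSigma}, with $A$ a free Stein kernel of $F$ lying in $L^2(W^*(F),\tau)^{\otimes 2}$.

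First I would fix such an $A$ and the corresponding $A_t$. The key observation is that $\partial$ composed with the map $P\mapsto P(F_t)$ has a bounded behaviour that one can quantify using the free Poincaré-type estimate hidden in the Schwinger–Dyson structure; more concretely, I would show that $\xi_t-F_t$ equals the image of $(1_{\mathcal{A}}\otimes 1_{\mathcal{A}})-A_t$ under the adjoint of the operator $P(F_t)\mapsto \partial P(F_t)$, restricted to the relevant closed subspace, and that this adjoint is a contraction up to an explicit factor. Since $A_t-1_{\mathcal{A}}\otimes 1_{\mathcal{A}}=e^{-2t}(A-1_{\mathcal{A}}\otimes 1_{\mathcal{A}})$, this would yield
$$\Phi^*(F_t|S)^{1/2}=\|\xi_t-F_t\|_{L^2}\leq c_t\,\|A_t-1_{\mathcal{A}}\otimes 1_{\mathcal{A}}\|_{L^2\otimes L^2}= c_t\,e^{-2t}\|A-1_{\mathcal{A}}\otimes 1_{\mathcal{A}}\|_{L^2\otimes L^2},$$
and taking the infimum over all free Stein kernels $A$ of $F$ would give $\Phi^*(F_t|S)^{1/2}\leq c_t\,e^{-2t}\,\Sigma^*(F|S)$. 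It then remains to identify the constant as $c_t=(1-e^{-2t})^{-1/2}$.

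To pin down $c_t$, I would exploit the semicircular decomposition $F_t=e^{-t}F+\sqrt{1-e^{-2t}}\,S$ with $S$ free from $F$: the free Brownian component contributes a "spectral gap" of size exactly $1-e^{-2t}$ on the Wigner chaos decomposition of $W^*(F_t)$. Concretely, writing $G=\sqrt{1-e^{-2t}}\,S$, the conditional-expectation intertwining of $\partial$ used just above Lemma~\ref{lemmaSigma} lets one transfer the computation of $\xi_t-F_t$ to a computation where the operator $P\mapsto\partial P(F_t)$ is tested against the free semicircular $G$; there the number operator on the Wigner chaos generated by $S$ is bounded below by $1$ on its nonconstant part, which after rescaling by $\sqrt{1-e^{-2t}}$ produces the factor $(1-e^{-2t})^{-1/2}$. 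This is the step I expect to be the main obstacle: making the "division by the number operator / spectral gap" rigorous in the operator-theoretic $L^2$ setting, i.e. verifying that $\xi_t-F_t$ genuinely lies in the range of the relevant operator and that the pseudo-inverse has the claimed norm, rather than merely bounding a bilinear form. Once that boundedness-and-range argument is in place, the conclusion $\Phi^*(F_t|S)^{1/2}\leq \frac{e^{-2t}}{\sqrt{1-e^{-2t}}}\Sigma^*(F|S)$ follows by the chain of inequalities above, completing the proof of Lemma~\ref{lemmaIS}.
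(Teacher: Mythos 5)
Your setup is sound and runs parallel to the paper's: you combine the conjugate-variable relation with the Stein kernel relation for $F_t$ to get $\langle \xi_t-F_t,P(F_t)\rangle=\langle 1_{\mathcal{A}}\otimes 1_{\mathcal{A}}-A_t,\partial P(F_t)\rangle$, note that $A_t-1_{\mathcal{A}}\otimes 1_{\mathcal{A}}=e^{-2t}(A-1_{\mathcal{A}}\otimes 1_{\mathcal{A}})$, and reduce everything to showing that the adjoint of $P(F_t)\mapsto \partial P(F_t)$ is bounded by $(1-e^{-2t})^{-1/2}$ on the relevant vectors. But that last step, which you yourself flag as ``the main obstacle,'' is the entire content of the lemma, and the mechanism you propose for it does not work. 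A ``spectral gap'' or Poincar\'e-type lower bound on the number operator gives a lower bound on $T:P(F_t)\mapsto \partial P(F_t)$, hence controls a left inverse of $T$; it does not give an upper bound on $T^*$. Indeed $\partial$ is an unbounded operator, so $T^*$ is certainly not bounded by any constant on all of $L^2(\mathcal{A},\tau)^{\otimes 2}$, and no global spectral argument can yield the claimed contraction property. What saves the day is a restriction of domain that your sketch never isolates.

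The paper's actual argument is the following. Write $\partial P(F_t)=(1-e^{-2t})^{-1/2}\,\partial_S\bigl(P(F_t)\bigr)$, where $\partial_S$ is Voiculescu's free difference quotient with respect to $S$ inside $W^*(F,S)$. By Voiculescu's formula, $\partial_S^*(P_1\otimes P_2)=P_1SP_2$ for $P_1,P_2\in\mathbb{C}\langle F\rangle$, and since each word $P_1SP_2$ contains exactly one letter $S$, one gets $\partial_S\partial_S^*=\mathrm{id}$ on $\mathbb{C}\langle F\rangle^{\otimes 2}$; hence $\partial_S^*$ is an \emph{isometry} on $L^2(W^*(F),\tau)\otimes L^2(W^*(F),\tau)$. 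Because $A$ (and therefore $1_{\mathcal{A}}\otimes 1_{\mathcal{A}}-A$) can be taken in exactly this subspace, one obtains $\xi_t-F_t=\frac{e^{-2t}}{\sqrt{1-e^{-2t}}}\,\tau\bigl(\partial_S^*(1_{\mathcal{A}}\otimes 1_{\mathcal{A}}-A)\,\big|\,F_t\bigr)$ and then Cauchy--Schwarz plus the isometry gives the bound with no loss. So the missing idea in your proposal is precisely the identification of the correct subspace ($L^2(W^*(F),\tau)^{\otimes 2}$, not $L^2(W^*(F_t),\tau)^{\otimes 2}$) together with the exact identity $\partial_S\partial_S^*=\mathrm{id}$ there; without it, the constant $c_t=(1-e^{-2t})^{-1/2}$ cannot be extracted, and the ``range and pseudo-inverse'' route you outline would at best recover inequalities pointing in the wrong direction.
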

\begin{proof}We can assume that $\Sigma^*(F|S)<+\infty$. Let us denote by $A$ a Stein kernel of $F$ and, because the orthogonal projection of a Stein kernel into $L^2(W^*(F),\tau)\otimes L^2(W^*(F),\tau)$ is a Stein kernel, we assume without loss of generality  that $A\in L^2(W^*(F),\tau)\otimes L^2(W^*(F),\tau)$. Because $F$ is free from $S$, we can consider the map $$\partial_S:L^2(W^*(F,S),\tau)\to L^2(W^*(F,S),\tau)\otimes L^2(W^*(F,S),\tau)$$ (respectively its adjoint $\partial_S^*$) defined in Section $3$ of \cite{V1998} as a closed unbounded operator whose domain contains $\mathbb{C}\langle F,S\rangle$ (respectively $\mathbb{C}\langle F,S\rangle^{\otimes 2}$) . The map $\partial_S$ is given, for all monomial $M$ in the variables $F$ and $S$, by
$$\partial_SM=\sum_{M=M_1SM_2}M_1\otimes M_2, $$
where the sum runs over all decompositions of
$M$ in the form $M=M_1SM_2$ with some monomials $M_1$ and $M_2$. By \cite[Proposition 4.3]{V1998} The map $\partial_S^*$ is given, for polynomials $P_1$ and $P_2$ in the variables $F$, by
$$\partial_S^*(P_1\otimes P_2)=P_1SP_2.$$ 
As a consequence, $\partial_S\partial_S^*$ is the identity on $\mathbb{C}\langle F\rangle^{\otimes 2}$, and, for all element $B\in \mathbb{C}\langle F\rangle^{\otimes 2}$, we have
$$\|\partial_S^*B\|_{L^2(\mathcal{A}, \tau)}^2=\langle\partial_S\partial_S^*B,B\rangle_{L^2(\mathcal{A}, \tau)\otimes L^2(\mathcal{A}, \tau)}^2=\|B\|_{L^2(\mathcal{A}, \tau)\otimes L^2(\mathcal{A}, \tau)}^2.$$
It implies that all elements in $L^2(W^*(F),\tau)\otimes L^2(W^*(F),\tau)$ are in the domain of $\partial_S^*$, and that $\partial_S^*$ maps isometrically $L^2(W^*(F),\tau)\otimes L^2(W^*(F),\tau)$ into $L^2(W^*(F,S),\tau)$.

As particular cases, $1_{\mathcal{A}}\otimes 1_{\mathcal{A}}$ and $A$ are in the domain of $\partial_S^*$. We know that $\partial_S^*(1_{\mathcal{A}}\otimes 1_{\mathcal{A}})=S$, but the computation of $\partial_S^*(A)$ is more difficult. However, we can say that under the conditional expectation $\tau(\cdot|F_t)$, we have
$$\sqrt{1-e^{-2t}}\tau(F|F_t)=e^{-t}\tau(\partial_S^*A|F_t).$$
Indeed, for all polynomial $P$, we have
\begin{multline*}
$$\sqrt{1-e^{-2t}}\langle P(F_t),F\rangle_{L^2(\mathcal{A}, \tau)}=\sqrt{1-e^{-2t}}e^{-t}\langle (\partial P)(F_t),A\rangle_{L^2(\mathcal{A}, \tau)}\\=e^{-t}\langle \partial_S (P(F_t)),A\rangle_{L^2(\mathcal{A}, \tau)}=e^{-t}\langle P(F_t),\partial_S^*A\rangle_{L^2(\mathcal{A}, \tau)}.
\end{multline*}
Finally, \cite[Corollary 3.9]{V1998} tells us that
$$\xi_t:=\frac{1}{\sqrt{1-e^{-2t}}}\tau(S|F_t)$$ is the conjugate variable of $F_t$.

In the following computation, the right member of the scalar product is always in $W^*(F_t)$. Thus, we have the freedom of replacing $\xi_t$ by $\frac{1}{\sqrt{1-e^{-2t}}}S$ and $\sqrt{1-e^{-2t}}F$ by $e^{-t}\partial_S^*A$ in the left member of the scalar product because they coincide under the conditional expectation $\tau(\cdot|F_t)$. We can compute
\begin{align*}\Phi^*(F_t|S)=& \left\langle \xi_t-F_t,\xi_t-F_t\right\rangle_{L^2(\mathcal{A}, \tau)}\\
=&\left\langle \xi_t-e^{-t}F-\sqrt{1-e^{-2t}}S,\xi_t-F_t\right\rangle_{L^2(\mathcal{A}, \tau)}\\
=&\frac{1}{\sqrt{1-e^{-2t}}}\left\langle S-\sqrt{1-e^{-2t}}e^{-t}F-(1-e^{-2t})S,\xi_t-F_t\right\rangle_{L^2(\mathcal{A}, \tau)}\\
=&\frac{1}{\sqrt{1-e^{-2t}}}\left\langle e^{-2t}S-e^{-2t}\partial_S^*A,\xi_t-F_t\right\rangle_{L^2(\mathcal{A}, \tau)}\\
=&\frac{e^{-2t}}{\sqrt{1-e^{-2t}}} \left\langle \partial_S^*(1_{\mathcal{A}}\otimes 1_{\mathcal{A}}-A),\xi_t-F_t\right\rangle_{L^2(\mathcal{A}, \tau)}\\
\leq& \frac{e^{-2t}}{\sqrt{1-e^{-2t}}}\| \partial_S^*(1_{\mathcal{A}}\otimes 1_{\mathcal{A}}-A)\|_{L^2(\mathcal{A}, \tau)}\Phi^*(F_t|S)^{1/2}\\
&\hspace{2cm} =\frac{e^{-2t}}{\sqrt{1-e^{-2t}}}\|1_{\mathcal{A}}\otimes 1_{\mathcal{A}}-A\|_{L^2(\mathcal{A}, \tau)\otimes L^2(\mathcal{A}, \tau)}\Phi^*(F_t|S)^{1/2},
\end{align*}
and we conclude by minimizing over $A$.
\end{proof}
We get the following proposition, which is the free counterpart of \cite[Proposition 3.1]{LNP2015}.
\begin{prop}\label{propWS}For all self-adjoint random variable $F$ of $(\mathcal{A},\tau)$, we have
$$ W_2(F,S) \leq \Sigma^*(F|S).$$
\end{prop}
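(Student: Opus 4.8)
The plan is to interpolate between $F$ and the target $S$ along the free Ornstein--Uhlenbeck path $F_t=e^{-t}F+\sqrt{1-e^{-2t}}S$ and to integrate the two derivative bounds established above, exactly as in the classical argument for \cite[Proposition 3.1]{LNP2015}. If $\Sigma^*(F|S)=+\infty$ there is nothing to prove, so I would assume $\Sigma^*(F|S)<+\infty$; taking $P(X)=X$ in the definition of a free Stein kernel then forces $\tau(F^2)<+\infty$, so $F\in L^2(\mathcal{A},\tau)$. Since $W_2(F,S)$ and $\Sigma^*(F|S)$ depend only on the distribution of $F$, I would also enlarge the ambient space so as to assume, as in Lemma~\ref{lemmaIS}, that $F$ is free from a standard semicircular variable $S$, and form the path $F_t$.

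First I would combine Lemma~\ref{lemmaWI} with Lemma~\ref{lemmaIS} to get, for every $t>0$,
$$\frac{\diff^+}{\diff t}W_2(F,F_t)\ \leq\ \Phi^*(F_t|S)^{1/2}\ \leq\ \frac{e^{-2t}}{\sqrt{1-e^{-2t}}}\,\Sigma^*(F|S).$$
The function $t\mapsto W_2(F,F_t)$ is continuous on $[0,+\infty)$, since $|W_2(F,F_s)-W_2(F,F_t)|\leq W_2(F_s,F_t)\leq\|F_s-F_t\|_{L^2(\mathcal{A},\tau)}$ and $F,S\in L^2(\mathcal{A},\tau)$, and it vanishes at $t=0$. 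The bound on the right is continuous on $(0,+\infty)$, nonnegative, and equivalent to a constant times $s^{-1/2}$ as $s\to 0^+$, hence integrable near the origin. Integrating the differential inequality first on $[\varepsilon,t]$, where the comparison principle for the upper right derivative applies to a continuous integrand, and then letting $\varepsilon\to 0$ using continuity at $0$ together with monotone convergence, I would obtain
$$W_2(F,F_t)\ \leq\ \Sigma^*(F|S)\int_0^t\frac{e^{-2s}}{\sqrt{1-e^{-2s}}}\,\diff s.$$

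The substitution $u=1-e^{-2s}$ evaluates the integral to $\sqrt{1-e^{-2t}}\leq 1$, so $W_2(F,F_t)\leq\Sigma^*(F|S)$ for all $t>0$. Finally, since $\|F_t-S\|_{L^2(\mathcal{A},\tau)}\leq e^{-t}\|F\|_{L^2(\mathcal{A},\tau)}+\bigl(1-\sqrt{1-e^{-2t}}\bigr)\to 0$ as $t\to\infty$, we have $W_2(F_t,S)\to 0$, and the triangle inequality for $W_2$ gives
$$W_2(F,S)\ \leq\ W_2(F,F_t)+W_2(F_t,S)\ \leq\ \Sigma^*(F|S)+W_2(F_t,S)\ \xrightarrow[t\to\infty]{}\ \Sigma^*(F|S).$$
The only delicate point I anticipate is the passage from the pointwise bound on the upper right derivative to the integral inequality: one must accommodate the (integrable) blow-up of the bound at $t=0$, which is why I would run the comparison on $[\varepsilon,t]$ and pass to the limit. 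Everything else is a routine substitution together with the triangle inequality for $W_2$.
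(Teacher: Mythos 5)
Your proposal is correct and follows essentially the same route as the paper: combine Lemma~\ref{lemmaWI} with Lemma~\ref{lemmaIS} and integrate the resulting differential inequality over $[0,+\infty)$, the integral of $e^{-2s}/\sqrt{1-e^{-2s}}$ evaluating to $1$. The extra care you take with the integrable singularity at $t=0$ and with the limit $W_2(F_t,S)\to 0$ as $t\to\infty$ merely fills in details the paper leaves implicit.
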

\begin{proof}Let $S$ be a standard semicircular variable free from $F$. For all $t\geq 0$, set $F_t=e^{-t}F+\sqrt{1-e^{-2t}}S.$ Lemma~\ref{lemmaWI} and Lemma~\ref{lemmaIS} tell us that
$$\frac{d^+}{dt}W_2(F,F_t)\leq (\Phi^*(F_t|S))^{1/2}\leq  \frac{e^{-2t}}{\sqrt{1-e^{-2t}}}\Sigma^*(F|S).$$
Finally, we integrate on $[0,+\infty[$ and we get $ W_2(F,S) \leq \Sigma^*(F|S).$
\end{proof}
\subsection{The free WSH inequality}We have also the following free analogue of the WSH inequality (see \cite[Theorem 3.2]{LNP2015}), which is an improvement of the free Talagrand inequality $W_2(F,S)^2\leq 2\ \chi^*(F|S)$, because $\arccos(e^{-x})\leq \sqrt{2x}$.
\begin{prop}\label{freeWSH}For all self-adjoint element $F$ of $(\mathcal{A},\tau)$, we have
$$ W_2(F,S) \leq \Sigma^*(F|S)\arccos\left(e^{-\frac{\chi^*(F|S)}{\Sigma^*(F|S)^2}}\right).$$
\end{prop}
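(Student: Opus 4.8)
The plan is to mimic the proof of Proposition~\ref{propWS}, but to integrate the differential inequality only up to a finite time $T$ and then estimate the remaining Wasserstein distance $W_2(F_T,S)$ by the free Talagrand inequality applied to $F_T$, finally optimizing over $T$. The key point is that both the Stein discrepancy and the free entropy of $F_t$ decay, and combining Lemma~\ref{lemmaWI}, Lemma~\ref{lemmaIS}, Lemma~\ref{lemmaSigma} and Lemma~\ref{deBruijn} will produce the constant $\arccos(e^{-\chi^*(F|S)/\Sigma^*(F|S)^2})$ after the optimization.

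First I would assume $\Sigma^*:=\Sigma^*(F|S)<\infty$ and $\chi^*:=\chi^*(F|S)<\infty$ (otherwise there is nothing to prove, the free Talagrand inequality already giving $W_2(F,S)^2\le 2\chi^*(F|S)$ and $\arccos(e^{-x})\le\sqrt{2x}$ covering the case $\Sigma^*<\infty$, $\chi^*=\infty$ vacuously). By the triangle inequality for $W_2$, for any $T>0$,
$$W_2(F,S)\leq W_2(F,F_T)+W_2(F_T,S).$$
For the first term, Lemma~\ref{lemmaWI} and Lemma~\ref{lemmaIS} give
$$W_2(F,F_T)\leq \int_0^T (\Phi^*(F_t|S))^{1/2}\diff t\leq \Sigma^*\int_0^T\frac{e^{-2t}}{\sqrt{1-e^{-2t}}}\diff t=\Sigma^*\left(1-\sqrt{1-e^{-2T}}\right).$$
For the second term, the free Talagrand inequality applied to $F_T$ gives $W_2(F_T,S)\leq (2\chi^*(F_T|S))^{1/2}$. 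Now by Lemma~\ref{deBruijn} and Lemma~\ref{lemmaIS},
$$\chi^*(F_T|S)=\int_T^\infty \Phi^*(F_t|S)\diff t\leq (\Sigma^*)^2\int_T^\infty\frac{e^{-4t}}{1-e^{-2t}}\diff t,$$
and this last integral evaluates, via the substitution $u=e^{-2t}$, to $\tfrac12\int_0^{e^{-2T}}\frac{u}{1-u}\diff u=\tfrac12\left(-e^{-2T}-\log(1-e^{-2T})\right)$; alternatively, and more cleanly, one may use Lemma~\ref{lemmaSigma} together with de Bruijn directly: $\chi^*(F_T|S)=\int_T^\infty\Phi^*(F_t|S)\diff t$ and bound $\Phi^*(F_t|S)\leq$ the expression coming from Lemma~\ref{lemmaIS} restarted from time $T$, i.e. treating $F_T$ as the new initial condition, $\Phi^*(F_T{}_{,s}|S)^{1/2}\leq \frac{e^{-2s}}{\sqrt{1-e^{-2s}}}\Sigma^*(F_T|S)\leq \frac{e^{-2s}}{\sqrt{1-e^{-2s}}}e^{-2T}\Sigma^*$, so that $\chi^*(F_T|S)\leq (e^{-2T}\Sigma^*)^2\cdot\tfrac12$ after integrating $\left(\frac{e^{-2s}}{\sqrt{1-e^{-2s}}}\right)^2$... wait, that integral diverges at $s=0$, so one must instead integrate $\Phi^*$ itself, giving $\chi^*(F_T|S)\le (e^{-2T}\Sigma^*)^2\int_0^\infty \frac{e^{-4s}}{1-e^{-2s}}\diff s$, which again diverges. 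So the correct route is the direct computation: $\chi^*(F_T|S)=\int_T^\infty\Phi^*(F_t|S)\diff t\le(\Sigma^*)^2\int_T^\infty\frac{e^{-4t}}{1-e^{-2t}}\diff t$.

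Putting both bounds together, with $r:=\sqrt{1-e^{-2T}}\in(0,1)$ ranging over all of $(0,1)$ as $T$ ranges over $(0,\infty)$:
$$W_2(F,S)\leq \Sigma^*(1-r)+\left(2\chi^*(F_T|S)\right)^{1/2}.$$
The delicate point — and I expect it to be the main obstacle — is to get the clean constant $\arccos(e^{-\chi^*/(\Sigma^*)^2})$ rather than some messier bound. Following the classical WSH proof of Ledoux–Nourdin–Peccati, one should not bound $W_2(F_T,S)$ by $(2\chi^*(F_T|S))^{1/2}$ but rather keep the de Bruijn identity $\tfrac{\diff}{\diff t}\chi^*(F_t|S)=-\Phi^*(F_t|S)$ in play and set up a combined differential inequality for $g(t):=W_2(F,F_t)+$ (something built from $\chi^*(F_t|S)$). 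Concretely: let $h(t)$ satisfy $h'(t)=-(\Phi^*(F_t|S))^{1/2}$ with $h(\infty)$ related to $W_2(F_\infty,S)=0$; using $\Phi^*(F_t|S)^{1/2}\le \frac{e^{-2t}}{\sqrt{1-e^{-2t}}}\Sigma^*$ and the de Bruijn formula, parametrize by the free entropy and recognize that $\int$ of $\frac{e^{-2t}}{\sqrt{1-e^{-2t}}}\Sigma^*\diff t$ against the constraint $\chi^*(F|S)=\int_0^\infty\Phi^*(F_t|S)\diff t$ is maximized, by a Cauchy–Schwarz / Lagrange-multiplier argument on the profile $t\mapsto \Phi^*(F_t|S)$, exactly when the bound of Lemma~\ref{lemmaIS} is saturated, yielding the substitution $\chi^*(F|S)=(\Sigma^*)^2(1-\cos\theta)$ hmm — more precisely one finds that $W_2(F,S)\le \Sigma^*\int_0^{t_0}\frac{e^{-2t}}{\sqrt{1-e^{-2t}}}\diff t$ where $t_0$ is chosen so that $(\Sigma^*)^2\int_{t_0}^\infty\frac{e^{-4t}}{1-e^{-2t}}\diff t=\chi^*(F_{t_0}|S)$ is consistent with $\chi^*(F|S)$. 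Carrying out the change of variables $e^{-2t}=\cos^2(\phi/2)$ or similar turns $\int\frac{e^{-2t}}{\sqrt{1-e^{-2t}}}\diff t$ into an arccosine, and matching the entropy constraint fixes the argument of the arccosine as $e^{-\chi^*(F|S)/\Sigma^*(F|S)^2}$. The routine but careful part is verifying these elementary integrals and the optimization; I would present it as: reduce to the extremal profile, compute the two resulting integrals in closed form, and check the identity $\arccos(e^{-x})=\int$(the Stein-discrepancy integral) holds under the entropy normalization — and note at the end that $\arccos(e^{-x})\le\sqrt{2x}$ recovers the free Talagrand inequality, as claimed in the statement.
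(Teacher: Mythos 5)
Your proposal does not close. The essential ingredient of the paper's proof is the free HSI inequality (Theorem~\ref{thHSI}), which you never invoke. Applied to $F_t$ and combined with the decay $\Sigma^*(F_t|S)\le\Sigma^*(F|S)$ from Lemma~\ref{lemmaSigma}, HSI gives the \emph{pointwise lower bound} $\Phi^*(F_t|S)\ge\Sigma^*(F|S)^2\bigl(e^{2\chi^*(F_t|S)/\Sigma^*(F|S)^2}-1\bigr)$, equivalently
$$\sqrt{\Phi^*(F_t|S)}\ \le\ \frac{\Phi^*(F_t|S)}{\Sigma^*(F|S)\sqrt{e^{2\chi^*(F_t|S)/\Sigma^*(F|S)^2}-1}},$$
and by de Bruijn (Lemma~\ref{deBruijn}) the right-hand side is exactly $-\frac{\diff}{\diff t}\bigl[\Sigma^*(F|S)\arccos\bigl(e^{-\chi^*(F_t|S)/\Sigma^*(F|S)^2}\bigr)\bigr]$; integrating $\frac{\diff^+}{\diff t}W_2(F,F_t)\le\sqrt{\Phi^*(F_t|S)}$ then yields the statement. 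None of the routes you sketch can substitute for this. The triangle-inequality route only uses the upper bound of Lemma~\ref{lemmaIS}, so your estimate of $\chi^*(F_T|S)$ is a function of $\Sigma^*(F|S)$ and $T$ alone and cannot reintroduce the dependence on $\chi^*(F|S)$ needed in the final constant. Your fallback, optimizing over profiles $t\mapsto\Phi^*(F_t|S)$ subject only to $\int_0^\infty\Phi^*(F_t|S)\diff t=\chi^*(F|S)$ and the Lemma~\ref{lemmaIS} ceiling, is a well-posed variational problem but its value is not $\Sigma^*\arccos(e^{-\chi^*/\Sigma^{*2}})$ (for instance, as $\chi^*\to\infty$ it tends to $\Sigma^*$, whereas the arccos bound tends to $\tfrac{\pi}{2}\Sigma^*$), and your assertion that the maximizer saturates the Lemma~\ref{lemmaIS} bound is impossible, since that saturation forces $\int_0^\infty\Phi^*(F_t|S)\diff t=+\infty$. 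What is missing, in one phrase, is any pointwise relation between $\Phi^*(F_t|S)$ and $\chi^*(F_t|S)$ along the flow; HSI is precisely that relation.

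Two smaller points. First, $\int_0^T\frac{e^{-2t}}{\sqrt{1-e^{-2t}}}\diff t=\sqrt{1-e^{-2T}}$, not $1-\sqrt{1-e^{-2T}}$; your expression vanishes as $T\to\infty$, which would absurdly give $W_2(F,S)\le 0$, whereas the correct value tends to $1$ and recovers Proposition~\ref{propWS}. Second, you correctly identify that the argument should follow Ledoux--Nourdin--Peccati's proof of the classical WSH inequality, and the paper says exactly this; but that proof is the HSI-driven differential inequality described above, not a time-splitting or a Lagrange-multiplier optimization.
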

The proof is mutatis mutandis the one of \cite[Theorem 3.2]{LNP2015}.
\begin{proof}Let $S$ be a standard semicircular variable free from $F$. For all $t\geq 0$, set $F_t=e^{-t}F+\sqrt{1-e^{-2t}}S.$ Theorem~\ref{thHSI} applied to $F_t$ gives us
$$\chi^*(F_t|S)\leq \frac{1}{2}\Sigma^*(F_t|S)^2 \log\left(1+\frac{\Phi^*(F_t|S)}{\Sigma^*(F_t|S)^2}\right).$$
Now, $\Sigma^*(F_t|S)\leq \Sigma^*(F|S)$ by Lemma~\ref{lemmaSigma} and $r\mapsto r\log(1+\Phi^*(F_t|S)/r)$ is increasing, from which it follows that
$$\chi^*(F_t|S)\leq \frac{1}{2}\Sigma^*(F|S)^2 \log\left(1+\frac{\Phi^*(F_t|S)}{\Sigma^*(F|S)^2}\right).$$
This inequality is equivalent to
$$\sqrt{\Phi^*(F_t|S)}\leq \frac{\Phi^*(F_t|S)}{\Sigma^*(F|S)\sqrt{e^{\frac{2\chi^*(F_t|S)}{\Sigma^*(F|S)^2}}-1}}.$$
Using Lemma~\ref{lemmaWI} and Lemma~\ref{deBruijn}, we get
$$\frac{d^+}{dt}W_2(F,F_t)\leq \sqrt{\Phi^*(F_t|S)}\leq  \frac{-\frac{\diff}{\diff t}\chi^*(F_t|S)}{\Sigma^*(F|S)\sqrt{e^{\frac{2\chi^*(F_t|S)}{\Sigma^*(F|S)^2}}-1}}=-\frac{\diff}{\diff t} \left(\Sigma^*(F|S)\arccos\left(e^{-\frac{\chi^*(F_t|S)}{\Sigma^*(F|S)^2}}\right)\right).$$
Finally, we integrate on $[0,+\infty[$ and we get the result because $\chi^*(S|S)=0$.
\end{proof}
\subsection{The multivariate case}
All the definitions of the current section make perfectly sense for a $n$-tuple $F=(F_1,\ldots,F_n)$ of self-adjoint elements in $(\mathcal{A}, \tau)$ and a $n$-tuple of free standard semicircular variables $S=(S_1,\ldots,S_n)$.
We refer to \cite{FN2016} for the precise definitions of the relative \emph{free Fisher information} $\Phi^*(F|S)$, the relative \emph{free entropy} $\chi^*(F|S)$ and the relative \emph{free Stein discrepancy} $\Sigma^*(F|S)$ in the multivariate setting. Note the following difference of notation:  in \cite{FN2016}, $\Phi^*(F|S)$ is denoted by $\Phi^*(F|V_1)$, $\chi^*(F|S)$ corresponds to $\chi^*(F|V_1)-\chi^*(S|V_1)$ and $\Sigma^*(F|S)$ is denoted by $\Sigma^*(F|V_1)$.

Beyond the definitions, the results of the current section extend to $n$-tuples $F=(F_1,\ldots,F_n)$ of self-adjoint elements. Lemma~\ref{deBruijn} and Theorem~\ref{thHSI} are stated in the multivariate case in the original article~\cite{FN2016}. The free Talagrand inequality and  Lemma~\ref{lemmaWI} are also true in the multivariate setting, as proved in~\cite[Theorem 26 and its proof]{D2010}. Furthermore, it is straightforward to adapt the proof of Lemma~\ref{lemmaSigma}, Lemma~\ref{lemmaIS}, Proposition~\ref{propWS} and Proposition~\ref{freeWSH}.

Finally, the free WS inequality and the free WSH inequality are true: for all $n$-tuples $F=(F_1,\ldots,F_n)$ of self-adjoint elements of $(\mathcal{A},\tau)$, we have
$$ W_2(F,S) \leq \Sigma^*(F|S)\ \ \ \text{and}\ \ \ W_2(F,S) \leq \Sigma^*(F|S)\arccos\left(e^{-\frac{\chi^*(F|S)}{\Sigma^*(F|S)^2}}\right),$$
where $S$ is a $n$-tuple of free standard semicircular variables and $W_2$ is the free $2$-Wasserstein distance of \cite{BV2001}.
\section{Stein discrepancy in the Wigner chaos}\label{Sec:Wigner}
Let $\mathcal{P}_2(n)$ be the set of pairings of $\{1,\ldots,n\}$. Let $\pi$ be a pairing of $\{1,\ldots,n\}$. A quadruplet $1\leq i< j < k < l \leq n$ is called a crossing of $\pi$ if $\{i,k\}\in \pi$ and $\{j,l\}\in \pi$. We denote by $\mathcal{NC}_2(n)$ the set of pairings of $\{1,\ldots,n\}$ which have no crossings. A subset $\mathcal{S}$ of self-adjoint elements of $\mathcal{A}$ is said to be \emph{centred jointly semicircular} if, for all $S_1,\ldots,S_n\in \mathcal{S}$, we have
\begin{equation*}
\tau(S_1\cdots S_n)=\sum_{\pi\in \mathcal{NC}_2(n)}\prod_{\{i,j\}\in \pi}\tau(S_i S_j).\end{equation*}
Note that this definition is the analogue of the Wick formula for a family $\mathcal{N}$ of centred jointly Gaussian random variables: for all $N_1,\ldots,N_n\in \mathcal{N}$, we have
\begin{equation*}
\mathbb{E}(N_1\cdots N_n)=\sum_{\pi\in \mathcal{P}_2(n)}\prod_{\{i,j\}\in \pi}\mathbb{E}(N_i N_j).\end{equation*}
Furthermore, every element of a family of centred jointly semicircular variables is a standard semicircular variable up to a multiplicative constant.

In order to study families of centred jointly semicircular variables, we will use free stochastic analysis, and for this reason, in the three next sections, we introduce the general framework of Wigner-It\^o integrals and the very useful tool of free Malliavin derivative.

\subsection{The semicircular field over $L^2_{\mathbb{R}}(\mathbb{R}_+)$}
Let $\mathcal{S}$ be a family of centred jointly semicircular variables in some tracial $W^*$-probability space $(\mathcal{A},\tau)$. For any integer $n\geq 0$, we denote by $\mathcal{P}_n$ the \emph{Wigner chaos} of order $n$, that is to say the Hilbert space in $L^2(\mathcal{A},\tau)$ generated by the set
$\{1_{\mathcal{A}}\}\cup\{S_1\cdots S_k:1\leq k\leq n,S_1,\ldots, S_k \in \mathcal{S} \}.$
We denote by $\mathcal{H}_n$ the \emph{homogeneous Wigner chaos} of order $n$, defined by
$$\mathcal{H}_n:=\mathcal{P}_n\ominus \mathcal{P}_{n-1}\ \ (=\mathcal{P}_n\cap \mathcal{P}_{n-1}^{\perp}),$$
and by $\pi_n$ the orthogonal projection of $L^2(\mathcal{A},\tau)$ onto $\mathcal{H}_n$. It follows that
$\mathcal{P}_n=\bigoplus_{k=0}^n\mathcal{H}_k.$
As examples, $\mathcal{H}_0$ is the Hilbert space $\mathbb{C}\cdot 1_\mathcal{A}$, and $\mathcal{H}_1$ is the Hilbert space in $L^2(\mathcal{A},\tau)$ generated by $\mathcal{S}$.

From now, we will assume that $\mathcal{H}_1$ is separable.  Moreover, enlarging $\mathcal{S}$ (and $\mathcal{A}$) if necessary, we will assume that $\mathcal{H}_1$ is an infinite separable Hilbert space and that $\mathcal{S}$ is the set of self-adjoint elements of $\mathcal{H}_1$. In this case, $\mathcal{S}$ is an infinite separable real Hilbert space. Every infinite separable real Hilbert space is isometrically isomorphic to $L^2_{\mathbb{R}}(\mathbb{R}_+)$, the space of real-valued measurable functions on $\mathbb{R}_+$ which are square-integrable. As a consequence, $\mathcal{S}$ is a \emph{semicircular field over $L^2_{\mathbb{R}}(\mathbb{R}_+)$}, in the sense that $\mathcal{S}$ is a centred jointly semicircular family which can be encoded as $\mathcal{S}=\{S(h):h\in L^2_{\mathbb{R}}(\mathbb{R}_+)\}$, where $h\mapsto S(h)$ is an isomorphism of Hilbert space  from $L^2_{\mathbb{R}}(\mathbb{R}_+)$ to $\mathcal{S}$. This convention will make life easier in the future, and has the advantage to stay in the framework of \cite{BS1998,KNPS2012,M2015}.

\subsection{The Wigner chaos}Our references for this section and the next one are \cite{BS1998,KNPS2012,M2015}. The fact that $\mathcal{S}$ is a semicircular field over $L^2_{\mathbb{R}}(\mathbb{R}_+)$ allows us to encode the homogeneous Wigner chaoses $\mathcal{H}_n$ by the spaces $L^2(\mathbb{R}_+^n)$, the spaces of complex-valued measurable functions on $\mathbb{R}_+^n$ which are square-integrable.

More precisely, we denote by $H=L^2(\mathbb{R}_+)$ the complexified of $L^2_{\mathbb{R}}(\mathbb{R}_+)$. We set $H^{\otimes 0}=\mathbb{C}$.
The mapping$$I_n:h_1\otimes \cdots \otimes h_n\mapsto\pi_n(S(h_1)\cdots S(h_n))$$ for $h_1,\ldots,h_n\in L^2_{\mathbb{R}}(\mathbb{R}_+)$ can be uniquely extended to a linear isometry between $H^{\otimes n}$ and $\mathcal{H}_n$, or between $L^2(\mathbb{R}_+^n)$  (which is a concrete realization of $H^{\otimes n}$) and $\mathcal{H}_n$. This map $I_n:L^2(\mathbb{R}_+^n)\to \mathcal{H}_n$ is called the \emph{Wick map}, or the multiple \emph{Wigner-It\^o integral} (with the convention that $I_0(1)=1_{\mathcal{A}}$). A priori, a multiple Wigner-It\^o integral is not supposed to belongs to $\mathcal{A}$, but, thanks to the following version of Haagerup's inequality, it does.
\begin{theorem}[Theorem 5.3.4 of \cite{BS1998}]\label{Haagerup}For all $A\in \mathcal{H}_n$, $A$ is in $\mathcal{A}$ and we have
 $$\|A\|_{\mathcal{A}} \leq (n+1)\|A\|_{L^2(\mathcal{A},\tau)}.$$
\end{theorem}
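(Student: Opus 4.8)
The plan is to make everything concrete on the full Fock space $\mathcal{F}(H)=\bigoplus_{m\geq 0}H^{\otimes m}$, identified with $L^2(\mathcal{A},\tau)$ so that the vacuum $\Omega$ corresponds to $1_{\mathcal{A}}$, each $\mathcal{H}_m$ to $H^{\otimes m}$, and $S(h)$ acts as $\ell(h)+\ell(h)^{*}$ (left creation plus annihilation). Under this identification $\|I_n(f)\|_{L^2(\mathcal{A},\tau)}=\|f\|_{H^{\otimes n}}$ because $I_n$ is an isometry, and for $A\in\mathcal{A}$ the norm $\|A\|_{\mathcal{A}}$ is the operator norm of the left-multiplication operator $L_A$ on $\mathcal{F}(H)$. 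So it suffices to prove $\|L_{I_n(f)}\,\xi\|\leq (n+1)\,\|f\|_{H^{\otimes n}}\,\|\xi\|$ for all $\xi\in\mathcal{F}(H)$.

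First I would reduce to $f$ a finite linear combination of simple tensors, for which $I_n(f)$ is a polynomial in the $S(h)$'s corrected by lower-order chaos terms, hence manifestly an element of $\mathcal{A}$; the general case then follows at the end by density, since the asserted bound makes $f\mapsto I_n(f)$ continuous for the operator norm, the $S(h)$-polynomials are operator-norm Cauchy, and $\mathcal{A}$ is norm-closed, so the limit lies in $\mathcal{A}$ and agrees with $I_n(f)$ in $L^2$. The workhorse is the multiplication formula for Wigner--It\^o integrals: for $g\in H^{\otimes m}$,
$$I_n(f)\cdot I_m(g)=\sum_{q=0}^{\min(n,m)} I_{n+m-2q}\bigl(f\frown_q g\bigr),$$
where $f\frown_q g\in H^{\otimes(n+m-2q)}$ is the free, "nested" contraction of the last $q$ legs of $f$ against the first $q$ legs of $g$. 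Equivalently, writing $\xi=\sum_m\xi_m$ with $\xi_m\in H^{\otimes m}$, we have $L_{I_n(f)}\,\xi=\sum_m\sum_{q=0}^{\min(n,m)}f\frown_q\xi_m$ with $f\frown_q\xi_m\in H^{\otimes(n+m-2q)}$.

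Next I would estimate, for each fixed $q\in\{0,\dots,n\}$, the contraction map $\xi_m\mapsto f\frown_q\xi_m$. Viewing $f\in H^{\otimes n}=H^{\otimes(n-q)}\otimes H^{\otimes q}$ as a Hilbert--Schmidt operator $T_q\colon H^{\otimes q}\to H^{\otimes(n-q)}$ (after reversing the order of its last $q$ legs, which is unitary and hence leaves the Hilbert--Schmidt norm unchanged), the contraction is precisely $T_q\otimes\id_{H^{\otimes(m-q)}}$ acting on $H^{\otimes m}=H^{\otimes q}\otimes H^{\otimes(m-q)}$, so its operator norm is $\|T_q\|_{\mathrm{op}}\leq\|T_q\|_{\mathrm{HS}}=\|f\|_{H^{\otimes n}}$, uniformly in $m$. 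To combine these bounds I group the terms of $L_{I_n(f)}\xi$ by target degree $d=n+m-2q$: each degree-$d$ component is $\sum_{q=0}^{n}f\frown_q\xi_{d-n+2q}$ (with the obvious range restrictions), a sum of at most $n+1$ vectors, and components of distinct degrees are orthogonal. Applying $\|\sum_{q=0}^{n}v_q\|^2\leq(n+1)\sum_q\|v_q\|^2$ and then observing that, for fixed $q$, the index $d-n+2q$ runs over distinct values as $d$ varies, one gets
$$\|L_{I_n(f)}\xi\|^2\leq(n+1)\sum_{q=0}^{n}\|f\|_{H^{\otimes n}}^2\sum_{d\geq0}\|\xi_{d-n+2q}\|^2\leq(n+1)^2\,\|f\|_{H^{\otimes n}}^2\,\|\xi\|^2,$$
which is exactly the claim $\|I_n(f)\|_{\mathcal{A}}\le (n+1)\|I_n(f)\|_{L^2(\mathcal{A},\tau)}$.

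The main obstacle is obtaining the constant $n+1$ rather than the weaker $\sqrt{n+1}$: since $I_n(f)$ simultaneously raises and lowers the particle number, the contributions of $\xi_m$ and $\xi_{m+2}$ (through contraction orders $q$ and $q+1$) land in the same degree and interfere, so one cannot simply argue degree-by-degree on the source side. The correct bookkeeping is the double grouping above, which costs one Cauchy--Schwarz factor $n+1$ for the interference inside each fixed target degree and a second factor $n+1$ — this one merely as a sum — from the $n+1$ admissible contraction orders. A secondary, purely notational point is to track the leg-reversal implicit in the free contraction; since reversal is unitary it is invisible to every norm used here, so it causes no real difficulty.
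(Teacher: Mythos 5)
Your proof is correct. Note that the paper does not prove this statement at all --- it is imported verbatim as Theorem 5.3.4 of \cite{BS1998} --- and what you have reconstructed is essentially the Biane--Speicher (Haagerup-type) argument: realize $I_n(f)$ as a left-multiplication operator on the full Fock space, split it via the multiplication formula into the $n+1$ contraction operators of fixed order $q$, and bound each by $\|f\|$ using the Hilbert--Schmidt-dominates-operator-norm estimate for $T_q\otimes\id$. One small remark: your double bookkeeping (Cauchy--Schwarz inside each target degree, then resummation over $q$) can be shortcut. For fixed $q$, the operator $A_q:\xi\mapsto\sum_m f\frown_q\xi_m$ already satisfies $\|A_q\|_{\mathrm{op}}\leq\|f\|$ on all of $\mathcal{F}(H)$, because as $m$ varies both the sources $H^{\otimes m}$ and the targets $H^{\otimes(n+m-2q)}$ are pairwise orthogonal, so $\|A_q\xi\|^2=\sum_m\|f\frown_q\xi_m\|^2\leq\|f\|^2\|\xi\|^2$; the triangle inequality $\|\sum_{q=0}^{n}A_q\|\leq(n+1)\|f\|$ then gives the constant $n+1$ directly, without any interference analysis. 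Your version arrives at the same constant and is sound, including the density step that places the limit in the norm-closed algebra $\mathcal{A}$.
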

\begin{corollary}Let $n\geq 0$. Then, $\mathcal{P}_n\subset \mathcal{A}$ holds. Furthermore, if $A,B\in \mathcal{P}_n$, then $AB\in \mathcal{P}_{2n}$ and 
$$\|AB\|_{L^2(\mathcal{A},\tau)}\leq (n+1)^2\|A\|_{L^2(\mathcal{A},\tau)}\|B\|_{L^2(\mathcal{A},\tau)}.$$
\end{corollary}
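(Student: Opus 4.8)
The plan is to prove the two assertions in order: first that the product of two elements of $\mathcal{P}_n$ lands in $\mathcal{P}_{2n}$, and then the $L^2$-norm bound.

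\textbf{Step 1: $AB\in\mathcal{P}_{2n}$.} By bilinearity and density it suffices to check this on a spanning set. Recall $\mathcal{P}_n$ is generated as a Hilbert space by $1_{\mathcal{A}}$ together with products $S_1\cdots S_k$ with $1\le k\le n$ and $S_i\in\mathcal{S}$. Given such a product $S_1\cdots S_k$ with $k\le n$ and another $S'_1\cdots S'_{k'}$ with $k'\le n$, their product is $S_1\cdots S_k S'_1\cdots S'_{k'}$, a word of length $k+k'\le 2n$ in elements of $\mathcal{S}$, hence lies in the linear span of the generating set of $\mathcal{P}_{2n}$. To pass from the generating set to all of $\mathcal{P}_n$ one uses that multiplication is separately continuous from $L^2$ to $L^2$ on each $\mathcal{P}_n$ — which is precisely the content of the quantitative bound in Step 2 — so once that bound is in hand, the inclusion $AB\in\mathcal{P}_{2n}$ follows for arbitrary $A,B\in\mathcal{P}_n$ by approximating in $L^2$ and using that $\mathcal{P}_{2n}$ is closed. (Alternatively, since $\mathcal{P}_n\subset\mathcal{A}$ by the corollary's first assertion — itself immediate from Theorem~\ref{Haagerup} and $\mathcal{P}_n=\bigoplus_{k=0}^n\mathcal{H}_k$ — the product $AB$ is a genuine element of $\mathcal{A}$, and the spectral/chaos decomposition places it in $\mathcal{P}_{2n}$.)

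\textbf{Step 2: the norm bound.} This is where Haagerup's inequality (Theorem~\ref{Haagerup}) does the work. Write $\|AB\|_{L^2(\mathcal{A},\tau)}$ and apply the Cauchy–Schwarz-type estimate $\|AB\|_{L^2}\le\|A\|_{\mathcal{A}}\,\|B\|_{L^2}$, valid because $\tau$ is a trace: indeed $\|AB\|_{L^2}^2=\tau(B^*A^*AB)\le\|A^*A\|_{\mathcal{A}}\,\tau(B^*B)=\|A\|_{\mathcal{A}}^2\|B\|_{L^2}^2$. Now $A\in\mathcal{P}_n=\bigoplus_{k=0}^n\mathcal{H}_k$, so decompose $A=\sum_{k=0}^n\pi_k(A)$; by Theorem~\ref{Haagerup} applied to each homogeneous piece, $\|A\|_{\mathcal{A}}\le\sum_{k=0}^n\|\pi_k(A)\|_{\mathcal{A}}\le\sum_{k=0}^n(k+1)\|\pi_k(A)\|_{L^2}$. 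The main technical point is to bound $\sum_{k=0}^n(k+1)\|\pi_k(A)\|_{L^2}$ by a clean multiple of $\|A\|_{L^2}$. By Cauchy–Schwarz in the index $k$, $\sum_{k=0}^n(k+1)\|\pi_k(A)\|_{L^2}\le\bigl(\sum_{k=0}^n(k+1)^2\bigr)^{1/2}\bigl(\sum_{k=0}^n\|\pi_k(A)\|_{L^2}^2\bigr)^{1/2}$, and the second factor is exactly $\|A\|_{L^2}$ by orthogonality of the chaoses. This would give a constant $\bigl(\sum_{k=0}^n(k+1)^2\bigr)^{1/2}$, which is of order $n^{3/2}$ — slightly worse than the claimed $(n+1)^2$ but the same order is not quite matched; to get the stated $(n+1)^2$ one instead uses the cruder but sufficient estimate $\|A\|_{\mathcal{A}}\le(n+1)\sum_{k=0}^n\|\pi_k(A)\|_{L^2}\le(n+1)\sqrt{n+1}\,\|A\|_{L^2}$... so one must be a little careful. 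The cleanest route giving exactly $(n+1)^2$: bound $\|A\|_{\mathcal{A}}\le(n+1)\|A\|_{L^2}$ directly — this holds for all $A\in\mathcal{P}_n$ (not just homogeneous $A$), because $\max_{0\le k\le n}(k+1)=n+1$ and one can check that the $\mathcal{A}$-norm estimate from Theorem~\ref{Haagerup} extends from $\mathcal{H}_n$ to $\mathcal{P}_n$ with the same constant $n+1$ (this is in fact the form in which Haagerup's inequality is usually stated, and is what Theorem 5.3.4 of \cite{BS1998} delivers). Then $\|AB\|_{L^2}\le\|A\|_{\mathcal{A}}\|B\|_{L^2}\le(n+1)\|A\|_{L^2}\|B\|_{L^2}$, which is even stronger than claimed; symmetrizing (or just keeping this) yields $\|AB\|_{L^2}\le(n+1)^2\|A\|_{L^2}\|B\|_{L^2}$ a fortiori.

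\textbf{Main obstacle.} The only delicate point is reconciling the constant: Theorem~\ref{Haagerup} as quoted is stated for homogeneous $A\in\mathcal{H}_n$, and one must either invoke the (standard) fact that the same bound $\|A\|_{\mathcal{A}}\le(n+1)\|A\|_{L^2}$ holds for all of $\mathcal{P}_n$, or else absorb the loss from summing over $k=0,\dots,n$ into the generous constant $(n+1)^2$. Everything else — the trace inequality $\|AB\|_{L^2}\le\|A\|_{\mathcal{A}}\|B\|_{L^2}$ and the chaos-length bookkeeping $AB\in\mathcal{P}_{2n}$ — is routine.
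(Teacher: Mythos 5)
Your Step 1 and the overall architecture (decompose into homogeneous pieces, apply Haagerup's inequality, trade the operator norm for the $L^2$ norm) match the paper's proof. The genuine gap is in your ``cleanest route'': the inequality $\|A\|_{\mathcal{A}}\le(n+1)\|A\|_{L^2(\mathcal{A},\tau)}$ for all $A\in\mathcal{P}_n$ is false, and it is not what Theorem~\ref{Haagerup} delivers --- that theorem (Theorem 5.3.4 of \cite{BS1998}) is genuinely a statement about the homogeneous chaos $\mathcal{H}_n$. A counterexample already at $n=1$: for a standard semicircular $S$, the element $A=1_{\mathcal{A}}+2S\in\mathcal{P}_1$ has $\|A\|_{L^2(\mathcal{A},\tau)}=\sqrt5$ while $\|A\|_{\mathcal{A}}=\sup_{x\in[-2,2]}|1+2x|=5>2\sqrt5$. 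More generally, $A=\sum_{k=0}^n(k+1)U_k(S)$ with $U_k$ the Chebyshev polynomials of the second kind (so that $U_k(S)\in\mathcal{H}_k$ are orthonormal and $U_k(2)=k+1$) shows that the optimal constant on $\mathcal{P}_n$ is exactly $\bigl(\sum_{k=0}^n(k+1)^2\bigr)^{1/2}\sim n^{3/2}/\sqrt3$; in other words, your Cauchy--Schwarz bound is sharp and cannot be improved to $n+1$.

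The good news is that the argument you discarded already finishes the proof: you mis-identified the target. Since you pay the operator norm only on the factor $A$, via $\|AB\|_{L^2(\mathcal{A},\tau)}\le\|A\|_{\mathcal{A}}\|B\|_{L^2(\mathcal{A},\tau)}$, the budget for $\|A\|_{\mathcal{A}}/\|A\|_{L^2(\mathcal{A},\tau)}$ is $(n+1)^2$, not $(n+1)$, and indeed $\bigl(\sum_{k=0}^n(k+1)^2\bigr)^{1/2}=\bigl(\tfrac{(n+1)(n+2)(2n+3)}{6}\bigr)^{1/2}\le(n+1)^2$ for every $n\ge0$. So the correct assembly is: one-sided trace inequality, Haagerup on each $\mathcal{H}_k$, Cauchy--Schwarz over $k$, done; Step 1 then goes through by continuity exactly as you say. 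For comparison, the paper instead bounds $\|AB\|_{L^2(\mathcal{A},\tau)}\le\|A\|_{\mathcal{A}}\|B\|_{\mathcal{A}}$ symmetrically and writes $\sum_k\|A_k\|_{L^2(\mathcal{A},\tau)}=\|A\|_{L^2(\mathcal{A},\tau)}$, which is the same oversight in different clothing (the correct estimate is $\le\sqrt{n+1}\,\|A\|_{L^2(\mathcal{A},\tau)}$, and the symmetric route then only yields $(n+1)^3$); your one-sided trace inequality combined with your own Cauchy--Schwarz step is actually the cleaner way to reach the stated constant $(n+1)^2$.
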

\begin{proof}Let us write $A=\sum_{k=0}^nA_k$, where each $A_k\in \mathcal{H}_k$. By Theorem~\ref{Haagerup}, we have
$$\|A\|_{\mathcal{A}} \leq \sum_{k=0}^n\|A_k\|_{\mathcal{A}}\leq (n+1)\sum_{k=0}^n\|A_k\|_{L^2(\mathcal{A},\tau)}=(n+1)\|A\|_{L^2(\mathcal{A},\tau)}.$$
Finally,
$\|AB\|_{L^2(\mathcal{A},\tau)}\leq\|AB\|_{\mathcal{A}}\leq \|A\|_{\mathcal{A}}\|B\|_{\mathcal{A}}\leq (n+1)^2\|A\|_{L^2(\mathcal{A},\tau)}\|B\|_{L^2(\mathcal{A},\tau)}$. Moreover, if $A,B\in\{1_{\mathcal{A}}\}\cup\{S_1\cdots S_k:1\leq k\leq n,S_1,\ldots, S_k \in \mathcal{S} \}$, then $AB\in \mathcal{P}_{2n}$, and we conclude by linearity and continuity.
\end{proof}
There is no reason that the product should behave well with the Wick map $I_n$. It is possible to get rid of this difficulty thanks to the following notion of contraction. Given $f\in L^2(\mathbb{R}_+^n)$ and $g\in L^2(\mathbb{R}_+^m)$, for every $0\leq p \leq n \wedge m$, the \emph{contraction} of $f$ and $g$ of order $p$ is the element of $L^2(\mathbb{R}_+^{n+m-2p})$ defined almost everywhere by
\begin{multline*}
f\overset{p}{\frown} g(t_1,\ldots,t_{n+m-2p}) \\=\int_{\mathbb{R}_+^{p}}f(t_1,\ldots,t_{n-p},s_p,\ldots,s_1)g(s_1,\ldots,s_{p},t_{n-p+1},\ldots,t_{n+m-2p})\diff s_1 \cdots \diff s_p.
\end{multline*}
\begin{prop}[Proposition 5.3.3 of \cite{BS1998}]\label{productofI}For all $f\in L^2(\mathbb{R}_+^n)$ and $g\in L^2(\mathbb{R}_+^m)$,
 $$I_n(f)I_m(g)=\sum_{p=0}^{n \wedge m}I_{n+m-2p}(f\overset{p}{\frown} g).$$
 \end{prop}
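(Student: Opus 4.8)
The plan is to pass to the concrete full‑Fock‑space model of the semicircular field, in which a multiple Wigner–Itô integral is an explicit sum of creation and annihilation operators, and then to compute the product of two such operators by normal ordering.

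\emph{Step 1: reduction to simple tensors.} Both sides of the asserted identity are bilinear in $(f,g)$; the contraction map $(f,g)\mapsto f\overset{p}{\frown}g$ is bounded from $L^2(\mathbb{R}_+^n)\times L^2(\mathbb{R}_+^m)$ to $L^2(\mathbb{R}_+^{n+m-2p})$ by Cauchy–Schwarz, each $I_k$ is an isometry by construction, and the product $(A,B)\mapsto AB$ is bounded on the relevant Wigner chaoses by the boundedness estimate established above (via Theorem~\ref{Haagerup}). Hence it suffices to treat $f=f_1\otimes\cdots\otimes f_n$ and $g=g_1\otimes\cdots\otimes g_m$ with $f_i,g_j\in L^2_{\mathbb{R}}(\mathbb{R}_+)$, in which case the definition of the contraction gives $f\overset{p}{\frown}g=\big(\prod_{j=1}^{p}\langle f_{n+1-j},g_j\rangle\big)\,f_1\otimes\cdots\otimes f_{n-p}\otimes g_{p+1}\otimes\cdots\otimes g_m$.

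\emph{Step 2: Fock realization and the Wick operator formula.} Realize $\mathcal{S}$ on $\mathcal{F}(H)=\bigoplus_{k\geq0}H^{\otimes k}$ with vacuum $\Omega$, taking $S(h)=\ell(h)+\ell(h)^*$, where $\ell(h)^*\xi=h\otimes\xi$ and $\ell(h)(\xi_1\otimes\cdots\otimes\xi_k)=\langle h,\xi_1\rangle\,\xi_2\otimes\cdots\otimes\xi_k$, $\ell(h)\Omega=0$. Then $\tau=\langle\Omega,\cdot\,\Omega\rangle$, the map $A\mapsto A\Omega$ identifies $L^2(\mathcal{A},\tau)$ with $\mathcal{F}(H)$ and $\mathcal{H}_k$ with $H^{\otimes k}$, and one has the ``free CCR'' relation $\ell(a)\ell(b)^*=\langle a,b\rangle\,\id$ on all of $\mathcal{F}(H)$. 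The key structural input is the Wick operator formula
$$I_n(f_1\otimes\cdots\otimes f_n)=\sum_{k=0}^{n}\ell(f_1)^*\cdots\ell(f_k)^*\,\ell(f_{k+1})\cdots\ell(f_n),$$
which I would prove by induction on $n$ through the recursion $I_n(f_1\otimes\cdots\otimes f_n)=S(f_1)\,I_{n-1}(f_2\otimes\cdots\otimes f_n)-\langle f_1,f_2\rangle\,I_{n-2}(f_3\otimes\cdots\otimes f_n)$: using the CCR relation to normal order $S(f_1)$ against the inductive expansion, one checks that the right‑hand side lies in $\mathcal{P}_n\subset\mathcal{A}$ and has $L^2$‑class $f_1\otimes\cdots\otimes f_n\in H^{\otimes n}=\mathcal{H}_n$, hence coincides with $I_n(f_1\otimes\cdots\otimes f_n)$ by faithfulness of $\tau$.

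\emph{Step 3: multiply and normal order.} In the product of the $k$‑th term of the Wick expansion of $I_n(f)$ with the $l$‑th term of that of $I_m(g)$, the $n-k$ annihilations $\ell(f_{k+1})\cdots\ell(f_n)$ meet the $l$ creations $\ell(g_1)^*\cdots\ell(g_l)^*$; applying $\ell(a)\ell(b)^*=\langle a,b\rangle\,\id$ repeatedly contracts $p:=\min(n-k,l)$ adjacent pairs $f_{n+1-j}\leftrightarrow g_j$, yielding the scalar $\prod_{j=1}^{p}\langle f_{n+1-j},g_j\rangle$ and a single normal‑ordered word of the form $\ell(\cdot)^*\cdots\ell(\cdot)^*\,\ell(\cdot)\cdots\ell(\cdot)$ built on $f_1\otimes\cdots\otimes f_{n-p}\otimes g_{p+1}\otimes\cdots\otimes g_m$. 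For each fixed $p\in\{0,\dots,n\wedge m\}$, the pairs $(k,l)$ with $\min(n-k,l)=p$ split into the case $l\geq n-k$ (forcing $k=n-p$, $l\in\{p,\dots,m\}$) and the case $l<n-k$ (forcing $l=p$, $k\in\{0,\dots,n-p-1\}$); a short bookkeeping argument shows these words are exactly, each occurring once, the $n+m-2p+1$ terms of the Wick expansion of $I_{n+m-2p}\big(f_1\otimes\cdots\otimes f_{n-p}\otimes g_{p+1}\otimes\cdots\otimes g_m\big)$, which together with the scalar is $I_{n+m-2p}(f\overset{p}{\frown}g)$. Summing over $p$ finishes the proof. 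I expect the main obstacle to be pinning down the Wick operator formula precisely — the ordering of the annihilation operators and the conjugation conventions in the inner products must be tracked carefully — together with the index bookkeeping in Step 3; the rest is routine.
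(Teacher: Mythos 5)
This proposition is quoted in the paper from \cite{BS1998} (Proposition 5.3.3 there) without proof, so there is no in-paper argument to compare against; judged on its own, your proof is correct and is essentially the standard one for this product formula. The reduction in Step 1 is legitimate because both sides are jointly continuous bilinear maps (contractions are contractive by Cauchy--Schwarz, $I_k$ is isometric, and the product is $L^2$-bounded on $\mathcal{P}_n\times\mathcal{P}_m$ by the corollary to Theorem~\ref{Haagerup}), and real simple tensors span a dense subspace. The Wick operator formula in Step 2 is the right structural input: the recursion $I_n(f_1\otimes\cdots\otimes f_n)=S(f_1)I_{n-1}(f_2\otimes\cdots\otimes f_n)-\langle f_1,f_2\rangle I_{n-2}(f_3\otimes\cdots\otimes f_n)$ both proves the formula by induction and shows the normal-ordered sum is a polynomial in the $S(f_i)$, hence lies in $\mathcal{P}_n\subset\mathcal{A}$, after which $W\Omega=f_1\otimes\cdots\otimes f_n$ and the identification $\mathcal{H}_k\Omega=H^{\otimes k}$ pin it down as $I_n(f_1\otimes\cdots\otimes f_n)$. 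The bookkeeping in Step 3 checks out: for fixed $p$ the case $k=n-p$, $l\in\{p,\ldots,m\}$ produces the Wick words with $n+l-2p\in\{n-p,\ldots,n+m-2p\}$ creators and the case $l=p$, $k\in\{0,\ldots,n-p-1\}$ produces those with $0,\ldots,n-p-1$ creators, so each of the $n+m-2p+1$ terms of $I_{n+m-2p}(f\overset{p}{\frown}g)$ occurs exactly once with the common scalar $\prod_{j=1}^{p}\langle f_{n+1-j},g_j\rangle$. Two small points worth making explicit in a final write-up: passing to the Fock realization is justified because the joint distribution of a centred semicircular family is determined by its covariance, so $W^*(\mathcal{S})$ is isomorphic to the Fock model; and $\ell(h)$ is antilinear in $h$, which is harmless only because Step 1 reduces to real-valued $f_i,g_j$.
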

In particular,
 $\tau(I_n(f)I_m(g))=\delta_{n=m}\cdot (f\overset{n}{\frown} g).$ Given $f\in L^2(\mathbb{R}_+^n)$, the \emph{adjoint} of $f$ is the function $f^*(t_1,\ldots,t_n)=\overline{f(t_n,\ldots,t_1)}$ in $L^2(\mathbb{R}_+^n)$, in such a way that $I_n(f)^*=I_n(f^*)$. In particular, $I_n(f)$ is self-adjoint if and only if $f=f^*$. The notion of contraction is useful to compute the norm of Wigner-Itô integrals.
  \begin{prop}[Proof of Theorem 1.6 of \cite{KNPS2012}]\label{propfourth}Let $f\in L^2(\mathbb{R}_+^n)$ such that $\|f\|_{L^2(\mathbb{R}_+^{n})}=1$. For all $1\leq p \leq n-1$, we have
  $$\tau(|I_n(f)|^4)=2+\sum_{p=1}^{n-1}\|f \overset{p}{\frown} f^*\|_{L^2(\mathbb{R}_+^{2n-2p})}^2.$$
 \end{prop}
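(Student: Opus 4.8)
The plan is to expand $\tau(|I_n(f)|^4)=\tau\bigl((I_n(f)I_n(f)^*)^2\bigr)$ (the two factorisations $A^*AA^*A$ and $AA^*AA^*$ of $|A|^4$ agree by traciality) using the product formula of Proposition~\ref{productofI}, and then read off which terms survive the trace. Since $I_n(f)^*=I_n(f^*)$, Proposition~\ref{productofI} gives $I_n(f)I_n(f^*)=\sum_{p=0}^{n}I_{2n-2p}(c_p)$ with $c_p:=f\overset{p}{\frown}f^*\in L^2(\mathbb{R}_+^{2n-2p})$, so that
$$\tau(|I_n(f)|^4)=\sum_{p=0}^{n}\sum_{q=0}^{n}\tau\bigl(I_{2n-2p}(c_p)\,I_{2n-2q}(c_q)\bigr).$$

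By the orthogonality relation recorded just after Proposition~\ref{productofI}, $\tau(I_a(u)I_b(v))=0$ unless $a=b$, so only the terms with $p=q$ remain, and each of them equals the full contraction $c_p\overset{2n-2p}{\frown}c_p$. To turn this scalar into a genuine squared norm I would use two facts. First, the same trace identity rewrites $c_p\overset{2n-2p}{\frown}c_p=\langle c_p^*,c_p\rangle_{L^2(\mathbb{R}_+^{2n-2p})}$. Second, taking adjoints in the product formula yields the general identity $(u\overset{p}{\frown}v)^*=v^*\overset{p}{\frown}u^*$; applied with $u=f$, $v=f^*$ it shows $c_p^*=c_p$, i.e.\ each $c_p$ is self-adjoint. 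Hence $c_p\overset{2n-2p}{\frown}c_p=\|c_p\|_{L^2(\mathbb{R}_+^{2n-2p})}^2$, and $\tau(|I_n(f)|^4)=\sum_{p=0}^{n}\|f\overset{p}{\frown}f^*\|_{L^2(\mathbb{R}_+^{2n-2p})}^2$.

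It remains to peel off the two extreme indices. For $p=0$ one has $f\overset{0}{\frown}f^*=f\otimes f^*$, whose norm is $\|f\|\,\|f^*\|=1$; for $p=n$ the contraction is the scalar $\int_{\mathbb{R}_+^n}|f(s_n,\ldots,s_1)|^2\,\diff s_1\cdots\diff s_n=\|f\|^2=1$, again of norm $1$ (these are distinct indices as soon as $n\geq 1$). Removing them leaves exactly $2+\sum_{p=1}^{n-1}\|f\overset{p}{\frown}f^*\|^2$, as claimed.

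The argument is essentially bookkeeping with the product and orthogonality formulas; the only step needing a little care — and the one I would treat as the main point — is the reduction of the diagonal full contraction to an honest $L^2$-norm, which rests on the self-adjointness of $c_p=f\overset{p}{\frown}f^*$ (equivalently on the adjoint–contraction identity $(u\overset{p}{\frown}v)^*=v^*\overset{p}{\frown}u^*$) and on matching the index conventions in the definition of $\overset{p}{\frown}$ so that both extreme terms evaluate to $1$.
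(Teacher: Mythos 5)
Your proof is correct, and it follows the same route as the source the paper cites for this statement (the paper itself gives no proof of Proposition~\ref{propfourth}, attributing it to the proof of Theorem 1.6 of \cite{KNPS2012}): expand $\tau\bigl((I_n(f)I_n(f)^*)^2\bigr)$ by the product formula, discard the off-diagonal terms by orthogonality of the chaoses, and identify each diagonal full contraction with $\|f\overset{p}{\frown}f^*\|^2$ via the mirror symmetry $(u\overset{p}{\frown}v)^*=v^*\overset{p}{\frown}u^*$. The bookkeeping at the extreme indices $p=0$ and $p=n$ (each contributing $\|f\|^4=1$) is also right, so nothing further is needed.
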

 It is also possible to define more sophisticated contractions. Let $n_1,\ldots,n_r$ be positive integers. We denote by $\mathcal{P}_2(n_1\otimes \cdots \otimes n_r)$ the set of pairings $\pi$ of $\mathcal{P}_2(n_1+\cdots+n_r)$ such that no block of $\pi$ contains more than one element from each interval set
$$\{1,\ldots,n_1\},\{n_1+1,\ldots,n_1+n_2\},\ldots,\{n_1+\cdots+n_{r-1}+1,\ldots,n_1+\cdots+n_r\}.$$
For all $f_1\in L^2(\mathbb{R}_+^{n_1}),\ldots,f_r\in L^2(\mathbb{R}_+^{n_r})$, we define the pairing integral of $f_1\otimes\cdots \otimes f_r$ with respect to $\pi \in \mathcal{P}_2(n_1\otimes \cdots \otimes n_r)$ to be the constant
$$\int_\pi f_1\otimes\cdots \otimes f_r:=\idotsint\limits_{\mathbb{R}_+^{(n_1+\cdots+n_r)/2}}(f_1\otimes\cdots \otimes f_r)(t_1,t_2,\ldots,t_{n_1+\cdots+n_r})\prod_{\{i,j\}\in \pi}\diff t_i,$$
where $t_i$ and $t_j$ are identified whenever $\{i,j\}\in \pi$.
 \begin{prop}[Lemma 2.1 of \cite{KNPS2012}]\label{propJanson}Let $n_1,\ldots,n_r>0$, $f_1\in L^2(\mathbb{R}_+^{n_1}),\ldots,f_r\in L^2(\mathbb{R}_+^{n_r})$ and $\pi \in \mathcal{P}_2(n_1\otimes \cdots \otimes n_r)$. Then,
 
 $$\left|\int_\pi f_1\otimes\cdots \otimes f_r\right|\leq \|f_1\|_{L^2(\mathbb{R}_+^{n_1})}\cdots \|f_r\|_{L^2(\mathbb{R}_+^{n_r})}.$$
 \end{prop}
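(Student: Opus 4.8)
The plan is to recast the pairing integral as a ``graph integral'' and then establish the bound by induction on $r$, at each step integrating out the variables of one factor by means of the Cauchy--Schwarz inequality.

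First I would attach to $\pi\in\mathcal{P}_2(n_1\otimes\cdots\otimes n_r)$ the multigraph $G$ on the vertex set $\{1,\dots,r\}$ carrying, for each block $\{a,b\}\in\pi$ with $a$ in the $i$-th interval $\{n_1+\cdots+n_{i-1}+1,\dots,n_1+\cdots+n_i\}$ and $b$ in the $j$-th interval, one edge between $i$ and $j$. The defining property of $\mathcal{P}_2(n_1\otimes\cdots\otimes n_r)$ forces $i\neq j$, so $G$ has no loops, and vertex $i$ has degree exactly $n_i$, its $n_i$ elements lying in $n_i$ distinct blocks of $\pi$. Relabelling the $n_i$ arguments of $f_i$ by the edges of $G$ incident to $i$ amounts to a permutation of the arguments of $f_i$, which leaves $\|f_i\|_{L^2}$ unchanged, and under this identification
$$\int_\pi f_1\otimes\cdots\otimes f_r=\int \prod_{i=1}^{r} f_i\bigl((x_e)_{e\ni i}\bigr)\ \prod_{e\in E(G)}\diff x_e,$$
with one integration variable $x_e$ per edge $e$ of $G$. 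Both sides of the claimed inequality are multiplicative over the connected components of $G$, so I may assume $G$ connected; since each $n_i\geq 1$ this forces $r\geq 2$.

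The base case is $r=2$: every edge then joins the two vertices, so after relabelling $f_1$ and $f_2$ are functions of the same $n_1=n_2$ variables, the integral is an inner product of $f_1$ with a reordering of $\overline{f_2}$, and the bound is Cauchy--Schwarz. For the inductive step $r\geq 3$, pick an edge of $G$ and call its endpoints $1$ and $2$; let $m\geq 1$ be the number of edges between them, let $(x_e)_e$ denote the corresponding $m$ variables, and let $y$ and $z$ collect the edge-variables at vertices $1$ and $2$ other than those $m$. Put
$$F(y,z):=\int_{\mathbb{R}_+^{m}} f_1\bigl((x_e)_e,y\bigr)\,f_2\bigl((x_e)_e,z\bigr)\ \prod_{e}\diff x_e.$$
Cauchy--Schwarz in the variables $(x_e)_e$ together with Tonelli's theorem gives $\|F\|_{L^2}\leq\|f_1\|_{L^2}\,\|f_2\|_{L^2}$. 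Merging the vertices $1$ and $2$ of $G$ and deleting the $m$ edges between them yields a multigraph $G'$ on $r-1$ vertices, which is again connected and all of whose degrees remain positive: the merged vertex has degree equal to the number of $y$- and $z$-edges, which is nonzero because $G$ is connected with $r\geq 3$ vertices. By construction $\int_\pi f_1\otimes\cdots\otimes f_r$ is exactly the graph integral over $G'$ of the functions $F,f_3,\dots,f_r$, so the induction hypothesis gives
$$\Bigl|\int_\pi f_1\otimes\cdots\otimes f_r\Bigr|\leq\|F\|_{L^2}\prod_{i=3}^{r}\|f_i\|_{L^2}\leq\prod_{i=1}^{r}\|f_i\|_{L^2},$$
closing the induction.

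The analytic content is just two applications of the Cauchy--Schwarz inequality, one in the base case and one to estimate $\|F\|_{L^2}$. The delicate point is the combinatorial bookkeeping: checking that $G$ is loopless with the correct degree sequence, that the pairing integral really is the stated integral over $G$ up to harmless permutations of arguments, and above all that the vertex-merge preserves both connectedness and the positivity of every degree, so that the induction hypothesis genuinely applies at each stage.
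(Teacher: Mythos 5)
Your proof is correct. Note that the paper does not prove this statement itself: it is imported verbatim as Lemma 2.1 of \cite{KNPS2012}, and your argument --- encoding the pairing as a loopless multigraph with degree sequence $(n_1,\ldots,n_r)$, reducing to connected components, and contracting one adjacent pair of vertices at a time via Cauchy--Schwarz --- is essentially the standard proof given in that reference (going back to Janson's treatment of Gaussian Hilbert spaces). The only point worth tightening is integrability: the Fubini step identifying $\int_\pi f_1\otimes\cdots\otimes f_r$ with the graph integral of $F,f_3,\ldots,f_r$ over $G'$ presupposes absolute convergence, which you can secure by first running the identical induction with $|f_1|,\ldots,|f_r|$ in place of $f_1,\ldots,f_r$; this shows the full integrand is in $L^1$ and at the same time delivers the claimed bound.
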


\subsection{The free Malliavin derivative}
The (free) \emph{Malliavin derivative} is the unique unbounded operator
$$\begin{array}{crcl}
\nabla: & L^2(\mathcal{S},\tau) & \to & L^2(\mathbb{R}_+;L^2(\mathcal{S},\tau)\otimes L^2(\mathcal{S},\tau)) \\ 
 & A & \mapsto & \nabla A=(\nabla_t A)_{t\geq 0} \\ 
\end{array}  $$
such that, for all $h\in L^2_{\mathbb{R}}(\mathbb{R}_+)$, we have
$\nabla(S(h))=h\cdot 1 \otimes 1$ and such that $\nabla$ satisfies the following derivation rule: for all $A$ and $B$ in the algebra generated by $\mathcal{S}$, we have
$\nabla(AB)=\nabla(A)\cdot B+A\cdot \nabla(B)$, where the left and right actions of $\mathcal{S}$ are given by multiplication on the left leg and by opposite multiplication on the right leg.
\begin{prop}[Proposition 5.3.10 of \cite{BS1998}]\label{domainnabla}Let $n\geq 0$. The domain of $\nabla$ contains $\mathcal{P}_n$, and the restriction of $\nabla$ to $\mathcal{P}_n$ is a bounded linear operator.
\end{prop}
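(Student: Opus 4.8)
The plan is to pass to the homogeneous chaoses, compute $\nabla$ explicitly there, and then estimate its norm. Since $\mathcal{P}_n=\bigoplus_{k=0}^n\mathcal{H}_k$ is an orthogonal decomposition and $\nabla$ is linear (and closable, so that we may replace it by its closure), it suffices to bound $\nabla$ on each $\mathcal{H}_k$, $0\le k\le n$, and then recombine over $k$. On $\mathcal{H}_n$ it is in turn enough to control $\nabla$ on the dense subspace $D_n$ spanned by the vectors $I_n(h_1\otimes\cdots\otimes h_n)=\pi_n(S(h_1)\cdots S(h_n))$ with $h_1,\dots,h_n\in L^2_{\mathbb{R}}(\mathbb{R}_+)$: these are polynomials in the $S(h_i)$, hence lie in the domain of $\nabla$, and once $\nabla|_{D_n}$ is shown to be bounded, closability forces $\mathcal{H}_n\subseteq\mathrm{dom}(\nabla)$ with the same bound (the graph of the bounded extension of $\nabla|_{D_n}$ sits inside the closed graph of $\nabla$).

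First I would establish, for $f=h_1\otimes\cdots\otimes h_n$, the explicit formula
$$\nabla_t I_n(f)=\sum_{k=1}^n h_k(t)\,I_{k-1}(h_1\otimes\cdots\otimes h_{k-1})\otimes I_{n-k}(h_{k+1}\otimes\cdots\otimes h_n)\qquad\text{for a.e. }t.$$
Applying the derivation rule to the product $S(h_1)\cdots S(h_n)$ gives $\nabla_t(S(h_1)\cdots S(h_n))=\sum_{k=1}^n h_k(t)\,S(h_1)\cdots S(h_{k-1})\otimes S(h_{k+1})\cdots S(h_n)$; one then has to replace $S(h_1)\cdots S(h_n)$ by its projection $\pi_n(\cdot)=I_n(f)$ onto $\mathcal{H}_n$ and verify that the lower-order corrections — which, by the product formula of Proposition~\ref{productofI}, are scalar multiples (with coefficients of the form $\langle h_i,h_j\rangle$) of shorter products of the $S(h_\ell)$ — reorganize, after differentiation, into exactly the projections $\pi_{k-1}$ and $\pi_{n-k}$ on the two legs. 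This is a bookkeeping induction on $n$. Writing $f^{(k)}_t\in L^2(\mathbb{R}_+^{k-1})\otimes L^2(\mathbb{R}_+^{n-k})\cong L^2(\mathbb{R}_+^{n-1})$ for $f$ with its $k$-th variable frozen to the value $t$, the formula becomes $\nabla_t I_n(f)=\sum_{k=1}^n(I_{k-1}\otimes I_{n-k})(f^{(k)}_t)$, which extends by linearity and continuity to arbitrary $f\in L^2(\mathbb{R}_+^n)$, Fubini guaranteeing $f^{(k)}_t\in L^2(\mathbb{R}_+^{n-1})$ for a.e. $t$.

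The norm estimate then follows. For fixed $t$ the $k$-th summand lies in $\mathcal{H}_{k-1}\otimes\mathcal{H}_{n-k}$, and since the subspaces $\mathcal{H}_a\otimes\mathcal{H}_b$ of $L^2(\mathcal{S},\tau)\otimes L^2(\mathcal{S},\tau)$ are pairwise orthogonal for distinct pairs $(a,b)$, the summands are mutually orthogonal; using that each $I_{k-1}\otimes I_{n-k}$ is an isometry (a tensor product of the isometries $I_j$), one gets $\|\nabla_t I_n(f)\|^2=\sum_{k=1}^n\|f^{(k)}_t\|_{L^2(\mathbb{R}_+^{n-1})}^2$. Integrating in $t$ and using Fubini term by term ($\int_0^\infty\|f^{(k)}_t\|^2\,\diff t=\|f\|_{L^2(\mathbb{R}_+^n)}^2$) yields $\|\nabla I_n(f)\|_{L^2(\mathbb{R}_+;\,L^2(\mathcal{S},\tau)^{\otimes 2})}^2=n\,\|f\|_{L^2(\mathbb{R}_+^n)}^2=n\,\|I_n(f)\|_{L^2(\mathcal{A},\tau)}^2$, so $\nabla|_{D_n}$ is bounded with norm $\sqrt n$ and extends to all of $\mathcal{H}_n$. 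Finally, for $A=\sum_{k=0}^n A_k\in\mathcal{P}_n$ with $A_k\in\mathcal{H}_k$, the same formula shows $\nabla_t A_k\in\bigoplus_{a+b=k-1}\mathcal{H}_a\otimes\mathcal{H}_b$, so the $\nabla A_k$ are pairwise orthogonal and $\|\nabla A\|^2=\sum_{k=0}^n\|\nabla A_k\|^2=\sum_{k=0}^n k\,\|A_k\|^2\le n\sum_{k=0}^n\|A_k\|^2=n\,\|A\|_{L^2(\mathcal{A},\tau)}^2$; hence $\mathcal{P}_n\subseteq\mathrm{dom}(\nabla)$ and $\|\nabla A\|\le\sqrt n\,\|A\|_{L^2(\mathcal{A},\tau)}$ on $\mathcal{P}_n$.

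The hard part will be the explicit formula of the second step. A crude term-by-term bound from the derivation rule applied to the unprojected product $S(h_1)\cdots S(h_n)$ controls $\|\nabla_t(\cdot)\|$ only by a multiple of $\prod_i\|h_i\|$, which may be far larger than $\|S(h_1)\cdots S(h_n)\|_{L^2(\mathcal{A},\tau)}$; it is precisely the cancellations encoded in the projection $\pi_n$ that make $\nabla$ bounded for the $L^2(\mathcal{A},\tau)$-norm, and unwinding them is what forces one to invoke the product formula and carry out the induction.
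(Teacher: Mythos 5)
Your argument is correct and is essentially the proof of Proposition 5.3.10 in \cite{BS1998}, which this paper only cites: the explicit chaos formula combined with the pairwise orthogonality of the subspaces $\mathcal{H}_a\otimes\mathcal{H}_b$ and the fact that each $I_{k-1}\otimes I_{n-k}$ is an isometry yields $\|\nabla A\|^2=\sum_{k=0}^n k\,\|A_k\|^2\le n\,\|A\|^2_{L^2(\mathcal{A},\tau)}$, and closability then gives $\mathcal{P}_n\subseteq\mathrm{dom}(\nabla)$. The only remark worth making is that the ``hard part'' you isolate, namely the formula $\nabla_t I_n(f)=\sum_{k=1}^n(I_{k-1}\otimes I_{n-k})(f^k_t)$, is exactly Proposition~\ref{nabla} (Proposition 5.3.9 of \cite{BS1998}), stated in the paper immediately afterwards, so in this context it can simply be invoked rather than re-established by induction on the product formula.
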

It is possible to describe explicitely the action of $\nabla$ on $\mathcal{P}_n$. For all $n,m\geq 0$, $I_n$ and $I_m$ induce the linear map $I_n\otimes I_m$ from the Hilbert space tensor product $L^2(\mathbb{R}_+^{n})\otimes L^2(\mathbb{R}_+^{m})$ to $\mathcal{H}_n\otimes \mathcal{H}_m$. The concrete realization $L^2(\mathbb{R}_+^{n+m})$ of the tensor product $L^2(\mathbb{R}_+^{n})\otimes L^2(\mathbb{R}_+^{m})$ allows us to consider $I_n\otimes I_m$ as a map
$$I_n\otimes I_m:L^2(\mathbb{R}_+^{n+m})\to\mathcal{H}_n\otimes \mathcal{H}_m$$
defined on $L^2(\mathbb{R}_+^{n+m})$ such that
$$I_n\otimes I_m:h_1\otimes \cdots \otimes h_{n+m}\mapsto\pi_n(S(h_1)\cdots S(h_n))\otimes \pi_n(S(h_{n+1})\cdots S(h_{n+m}))$$ for $h_1,\ldots,h_{n+m}\in L^2_{\mathbb{R}}(\mathbb{R}_+)$.

Finally, for all $f\in L^2(\mathbb{R}_+^{n})$, and for almost all $t\in \mathbb{R}_+$, we define  $f^k_t\in L^2(\mathbb{R}_+^{n-1})$ by the following equality, true for almost all $t\geq 0$:
$$f(t_1,\ldots,t_{k-1},t,t_{k+1},\ldots,t_n)=f^k_t(t_1,\ldots,t_{k-1},t_{k+1},\ldots,t_n).$$
\begin{prop}[Proposition 5.3.9 of \cite{BS1998}]\label{nabla}Let $n\geq 0$. The Malliavin derivative $\nabla$ maps $\mathcal{P}_n$ into $L^2(\mathbb{R}_+;\mathcal{P}_n\otimes \mathcal{P}_n)$. More precisely, if $f\in L^2(\mathbb{R}_+^{n})$, then, for almost all $t\geq 0$,
$$\nabla_t(I_n(f))=\sum_{k=1}^n (I_{k-1}\otimes I_{n-k})(f^k_t).$$
\end{prop}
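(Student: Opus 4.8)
\emph{Plan.} I would prove this by induction on $n$, reducing to product functions and then exploiting the basic recursion for Wigner--Itô integrals together with the derivation rule for $\nabla$.

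For $n=0$ one has $\nabla(I_0(1))=\nabla(1_{\mathcal{A}})=0$ while the right-hand side is an empty sum, and for $n=1$ the identity is precisely the defining relation $\nabla_t(S(h))=h(t)\,1_{\mathcal{A}}\otimes 1_{\mathcal{A}}$. So fix $n\geq 2$ and assume the statement for all smaller orders. Both sides of the claimed identity are bounded linear maps from $L^2(\mathbb{R}_+^n)$ to $L^2(\mathbb{R}_+;\mathcal{P}_n\otimes\mathcal{P}_n)$: the left-hand side by Proposition~\ref{domainnabla} together with the fact that $I_n$ is an isometry, and the right-hand side because $\int_0^\infty\|f^k_t\|_{L^2(\mathbb{R}_+^{n-1})}^2\,\diff t=\|f\|_{L^2(\mathbb{R}_+^n)}^2$ by Fubini while each $I_{k-1}\otimes I_{n-k}$ is an isometry. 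Since the functions $f=h_1\otimes\cdots\otimes h_n$ with $h_1,\dots,h_n\in L^2_{\mathbb{R}}(\mathbb{R}_+)$ span a dense subspace of $L^2(\mathbb{R}_+^n)$, it is enough to establish the identity for such an $f$.

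Given such an $f$, set $g=h_1\otimes\cdots\otimes h_{n-1}$, $g'=h_1\otimes\cdots\otimes h_{n-2}$ and $c=\int_{\mathbb{R}_+}h_{n-1}h_n$. Applying the product formula (Proposition~\ref{productofI}) to $I_{n-1}(g)$ and $I_1(h_n)=S(h_n)$ gives the recursion $I_n(f)=I_{n-1}(g)\,S(h_n)-c\,I_{n-2}(g')$. All elements occurring here lie in the algebra generated by $\mathcal{S}$, so we may apply the derivation rule; using $\nabla_t(S(h_n))=h_n(t)\,1_{\mathcal{A}}\otimes 1_{\mathcal{A}}$ and the induction hypothesis for $I_{n-1}(g)$ and $I_{n-2}(g')$ we obtain
$$\nabla_t I_n(f)=\Big(\sum_{k=1}^{n-1}(I_{k-1}\otimes I_{n-1-k})(g^k_t)\Big)\cdot S(h_n)+h_n(t)\,I_{n-1}(g)\otimes 1_{\mathcal{A}}-c\sum_{j=1}^{n-2}(I_{j-1}\otimes I_{n-2-j})((g')^j_t).$$
The middle summand equals $(I_{n-1}\otimes I_0)(f^n_t)$, which is the $k=n$ term of the claim. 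In the first summand, $(I_{k-1}\otimes I_{n-1-k})(g^k_t)=h_k(t)\,I_{k-1}(h_1\otimes\cdots\otimes h_{k-1})\otimes I_{n-1-k}(h_{k+1}\otimes\cdots\otimes h_{n-1})$, and right multiplication by $S(h_n)$ acts on the right leg; expanding $I_{n-1-k}(h_{k+1}\otimes\cdots\otimes h_{n-1})\,S(h_n)$ by the product formula yields $I_{n-k}(h_{k+1}\otimes\cdots\otimes h_n)$ plus, when $k\leq n-2$, the correction $c\,I_{n-2-k}(h_{k+1}\otimes\cdots\otimes h_{n-2})$. The ``principal'' pieces assemble into $\sum_{k=1}^{n-1}(I_{k-1}\otimes I_{n-k})(f^k_t)$, i.e.\ the terms $k=1,\dots,n-1$ of the claim, and the correction pieces assemble (relabelling $k$ as $j$) into exactly $c\sum_{j=1}^{n-2}(I_{j-1}\otimes I_{n-2-j})((g')^j_t)$, cancelling the last summand above. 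This gives $\nabla_t I_n(f)=\sum_{k=1}^n(I_{k-1}\otimes I_{n-k})(f^k_t)$, and the general case follows by density.

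The step I expect to demand the most care is the last one: verifying that the contraction corrections produced by the products $I_{n-1-k}(\cdots)\,S(h_n)$ match the expansion of $\nabla_t(I_{n-2}(g'))$ term by term, including the degenerate case $k=n-1$, in which no contraction occurs and the right leg is simply $S(h_n)=I_1(h_n)$. This is a routine but slightly fiddly index bookkeeping, made transparent by noting that both families are naturally indexed by the position of the differentiated variable among $h_1,\dots,h_{n-2}$.
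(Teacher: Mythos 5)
The paper does not prove this statement: it is imported as Proposition 5.3.9 of \cite{BS1998}, so there is no in-house argument to compare against. Your induction is a correct, self-contained reconstruction from exactly the ingredients the paper makes available, namely the two defining properties of $\nabla$ (its value $h\cdot 1_{\mathcal{A}}\otimes 1_{\mathcal{A}}$ on $S(h)$ and the derivation rule), the product formula of Proposition~\ref{productofI}, and the boundedness of $\nabla$ on $\mathcal{P}_n$ from Proposition~\ref{domainnabla}, which legitimises the reduction to elementary tensors $h_1\otimes\cdots\otimes h_n$ with real factors. The base cases, the recursion $I_n(h_1\otimes\cdots\otimes h_n)=I_{n-1}(h_1\otimes\cdots\otimes h_{n-1})S(h_n)-c\,I_{n-2}(h_1\otimes\cdots\otimes h_{n-2})$, and the cancellation of the order-one contraction corrections against $-c\,\nabla_t I_{n-2}(g')$ all check out, including the degenerate term $k=n-1$ where no contraction occurs. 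The one point I would make explicit is the bimodule convention: the right action of $B$ on $U\otimes V$ must be $U\otimes VB$ (this is what the paper's phrase about ``opposite multiplication on the right leg'' amounts to, and it is the convention forced by consistency with the chain rule $\nabla_t(P(A))=\partial P(A)\,\sharp\,\nabla_t A$ and by the $n=2,3$ instances of the formula itself); under the other reading, the product $I_{n-1-k}(h_{k+1}\otimes\cdots\otimes h_{n-1})S(h_n)$ would be replaced by $S(h_n)I_{n-1-k}(h_{k+1}\otimes\cdots\otimes h_{n-1})$, the contraction would pair $h_n$ with $h_{k+1}$ rather than with $h_{n-1}$, and your bookkeeping would not close. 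You use the correct convention, so the argument is sound.
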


Let $n\geq 0$. If $A_1\otimes  B_1\in \mathcal{P}_n\otimes \mathcal{P}_n$ and $A_2\otimes B_2\in \mathcal{P}_n \otimes \mathcal{P}_n$, we set
$$(A_1\otimes B_1)^*:=A_1^*\otimes B_1^*\text{ and }(A_1\otimes B_1)\sharp (A_2\otimes  B_2):=A_1A_2\otimes B_2B_1.$$
By anti-linearity and continuity, 
the mapping $A\mapsto A^*$ extends to an antilinear map
from $\mathcal{P}_n \otimes \mathcal{P}_n$ to itself. By bilinearity and continuity, we extend 
the mapping $(A, B)\mapsto A\sharp B$ to a continuous bilinear map
from $(\mathcal{P}_n \otimes \mathcal{P}_n)^2$ to $\mathcal{P}_{2n}\otimes \mathcal{P}_{2n}$. Indeed, if $A$ and $B$ are finite sums $A=\sum_k A_k\otimes C_k$ and $B=\sum_k B_k\otimes C_k$, with $\{C_k\}$ an orthonormal family, we have
\begin{multline*}\hspace{-0.4cm}\|A\sharp B\|_{L^2(\mathcal{A},\tau)\otimes L^2(\mathcal{A},\tau)}=\|\sum_{i,j}A_iB_j\otimes C_jC_i\|_{L^2(\mathcal{A},\tau)\otimes L^2(\mathcal{A},\tau)}\leq \sum_{i,j}\|A_iB_j\|_{L^2(\mathcal{A},\tau)}\| C_jC_i\|_{L^2(\mathcal{A},\tau)}\\
\leq \sum_{i,j}(n+1)^4\|A_i\|_{L^2(\mathcal{A},\tau)}\|B_j\|_{L^2(\mathcal{A},\tau)}\| C_j\|_{L^2(\mathcal{A},\tau)}\|C_i\|_{L^2(\mathcal{A},\tau)}=(n+1)^4\|A\|_{L^2(\mathcal{A},\tau)}\| B\|_{L^2(\mathcal{A},\tau)}
.\end{multline*}
By construction, we have $(A\sharp B)^*=B^*\sharp A^*$, and the map
$(A,B)\mapsto A\sharp B$ is associative on $\cup_n (\mathcal{P}_n \otimes \mathcal{P}_n)^2$. Moreover, the relation $$\langle A,B \rangle_{L^2(\mathcal{A},\tau)\otimes L^2(\mathcal{A},\tau)}=\tau \otimes \tau (B\sharp A^*),$$ which is true for $A=A_1\otimes  B_1\in \mathcal{P}_n\otimes \mathcal{P}_n$ and $B=A_2\otimes B_2\in \mathcal{P}_n \otimes \mathcal{P}_n$, extends by (anti)linearity and continuity to all $A, B \in \mathcal{P}_n \otimes \mathcal{P}_n$.

From the derivation rule of $\nabla$, we deduce the following chain rule for $A$ in the algebra generated by $\mathcal{S}$, and by density, for $A\in \mathcal{P}_n$.
\begin{prop}\label{chainrule}Let $A\in \mathcal{P}_n$ and $P\in \mathbb{C}[X]$. For almost all $t\geq 0$,
$\nabla_t(P(A))=(\partial P(A))\sharp (\nabla_t A).$
\end{prop}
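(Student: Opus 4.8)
The plan is to prove the identity first for $A$ in the $*$-algebra $\mathcal{S}_{\mathrm{alg}}$ generated by $\mathcal{S}$, where the Leibniz rule for $\nabla$ applies verbatim, and then to extend it to all of $\mathcal{P}_n$ by continuity. Since both $P\mapsto\nabla_t(P(A))$ and $P\mapsto(\partial P(A))\sharp(\nabla_t A)$ are linear in $P$ --- the second because $\partial$ is linear and $\sharp$ is bilinear --- it suffices to treat the monomials $P=X^m$, $m\geq 0$.

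For $m=0$ both sides vanish, since $\nabla_t(1_{\mathcal{A}})=0$ and $\partial 1=0$; for $m=1$ we have $\partial X=1\otimes 1$, and $(1_{\mathcal{A}}\otimes 1_{\mathcal{A}})\sharp(\nabla_t A)=\nabla_t A$ straight from the definition of $\sharp$, matching $\nabla_t(A)$. For the inductive step, assuming the identity for $X^m$, I would write $A^{m+1}=A^m\cdot A$ and apply the Leibniz rule, obtaining
$$\nabla_t(A^{m+1})=\big(\nabla_t(A^m)\big)\cdot A+A^m\cdot\big(\nabla_t A\big)=\big((\partial X^m(A))\sharp(\nabla_t A)\big)\cdot A+A^m\cdot\big(\nabla_t A\big),$$
where $\cdot$ denotes the left and right module actions entering the Leibniz rule. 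On the other side, isolating the term $k=m+1$ in $\partial X^{m+1}=\sum_{k=1}^{m+1}X^{k-1}\otimes X^{m+1-k}$ gives the decomposition $\partial X^{m+1}(A)=A^m\otimes 1_{\mathcal{A}}+\big(\partial X^m(A)\big)\cdot A$, the last dot being multiplication of the \emph{right} tensor leg by $A$. Inserting this into $\sharp(\nabla_t A)$ and using the formula $(C_1\otimes D_1)\sharp(C_2\otimes D_2)=C_1C_2\otimes D_2D_1$, one checks that $(A^m\otimes 1_{\mathcal{A}})\sharp(\nabla_t A)$ reproduces the summand $A^m\cdot(\nabla_t A)$ while $\big((\partial X^m(A))\cdot A\big)\sharp(\nabla_t A)$ reproduces $\big((\partial X^m(A))\sharp(\nabla_t A)\big)\cdot A$, so the two displayed expressions agree. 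This closes the induction, so the chain rule holds for $A\in\mathcal{S}_{\mathrm{alg}}$ and almost every $t\geq 0$.

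It remains to pass to $\mathcal{P}_n$. Fix $P$ of degree $d$. The map $A\mapsto P(A)$ is continuous from $\mathcal{P}_n$ (with the $L^2$ norm) into $\mathcal{P}_{dn}$, by iterating the corollary bounding products of elements of $\mathcal{P}_n$, and $\nabla$ is bounded on $\mathcal{P}_{dn}$ by Proposition~\ref{domainnabla}; hence $A\mapsto\nabla_t(P(A))$ is continuous on $\mathcal{P}_n$. Likewise $A\mapsto\partial P(A)$ is continuous from $\mathcal{P}_n$ into $\mathcal{P}_{(d-1)n}\otimes\mathcal{P}_{(d-1)n}$ by the same corollary, $\nabla$ is bounded, and $\sharp$ is continuous (as established just before the statement), so $A\mapsto(\partial P(A))\sharp(\nabla_t A)$ is continuous on $\mathcal{P}_n$. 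Finally, $\mathcal{S}_{\mathrm{alg}}$ is dense in $L^2(\mathcal{S},\tau)$ and is preserved by the orthogonal projection onto $\mathcal{P}_n$ (each $\pi_k$ maps polynomials in the $S(h)$ to polynomials), so its image in $\mathcal{P}_n$ is dense; the identity, valid there, extends to all of $\mathcal{P}_n$ by continuity, still for almost every $t\geq 0$.

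The step I expect to be the real obstacle is the bookkeeping in the induction: one must keep careful track of which tensor leg each factor acts on, and with which orientation, so that after applying $\sharp(\nabla_t A)$ the terms line up precisely with the two pieces of the Leibniz expansion. Everything else --- the base cases, the reduction to monomials, and the density/continuity argument --- is routine.
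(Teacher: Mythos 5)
Your proof is correct and follows exactly the route the paper intends (and only sketches in the sentence preceding the statement): establish the identity on the algebra generated by $\mathcal{S}$ via the derivation rule --- here spelled out by induction on monomials --- and extend to all of $\mathcal{P}_n$ using the boundedness of $\nabla$ on each $\mathcal{P}_{m}$, the continuity of $\sharp$, and density. The bookkeeping in your inductive step is consistent with the convention $(A_1\otimes B_1)\sharp(A_2\otimes B_2)=A_1A_2\otimes B_2B_1$ and with the explicit formula of Proposition~\ref{nabla}, so nothing further is needed.
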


\subsection{A free Stein kernel for elements of the Wigner chaos}
Let $n\geq 0$. The bounded linear maps $\tau\otimes \id:\mathcal{P}_n\otimes \mathcal{P}_n\to \mathcal{P}_n$ and $\id\otimes \tau:\mathcal{P}_n\otimes \mathcal{P}_n\to \mathcal{P}_n$ are defined by $\tau\otimes \id(A\otimes B)=\tau(A)\cdot B\text{\ and\ }\id\otimes \tau(A\otimes B)=\tau(B)\cdot A$ for all $A,B\in \mathcal{P}_n$.

\begin{lemma}\label{lemmanabla}For all $A,B\in \mathcal{P}_n$ such that $\tau(A)=0$ or $\tau(B)=0$, we have
$$\tau(AB)=\tau\left(\int_{\mathbb{R}_+}(\id\otimes \tau(\nabla_t A))\cdot (\tau\otimes \id(\nabla_t B))\diff t\right).$$
\end{lemma}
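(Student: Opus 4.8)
The plan is to reduce the identity to homogeneous Wigner integrals and then verify it by an explicit computation, using the description of $\nabla_t$ in Proposition~\ref{nabla} together with the product formula in Proposition~\ref{productofI}. Concretely, I would first use the decomposition $\mathcal{P}_n=\bigoplus_{k=0}^{n}\mathcal{H}_k$ to write $A=\tau(A)\,1_{\mathcal{A}}+\sum_{k=1}^{n}I_k(f_k)$ and $B=\tau(B)\,1_{\mathcal{A}}+\sum_{k=1}^{n}I_k(g_k)$ with $f_k,g_k\in L^2(\mathbb{R}_+^k)$. Both sides of the asserted identity are bilinear in $(A,B)$ and continuous for the $L^2$-norm on $\mathcal{P}_n$: on the left because $|\tau(AB)|\leq\|A\|_{\mathcal{A}}\|B\|_{L^2(\mathcal{A},\tau)}$ and Theorem~\ref{Haagerup}, and on the right because $\nabla$ is bounded on $\mathcal{P}_n$ (Proposition~\ref{domainnabla}) while $\id\otimes\tau$, $\tau\otimes\id$ and the product on $\mathcal{P}_n\otimes\mathcal{P}_n$ are bounded. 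Hence it suffices to treat the contributions of the homogeneous pieces separately. The terms of the bilinear expansion involving $\tau(A)\,1_{\mathcal{A}}$ or $\tau(B)\,1_{\mathcal{A}}$ contribute $\tau(A)\tau(B)$ to the left-hand side and $0$ to the right-hand side, since $\nabla 1_{\mathcal{A}}=0$; this is exactly where the hypothesis $\tau(A)\tau(B)=0$ is used. It therefore remains to show, for all $k,m\geq1$, $f\in L^2(\mathbb{R}_+^k)$ and $g\in L^2(\mathbb{R}_+^m)$,
$$\tau\big(I_k(f)I_m(g)\big)=\tau\left(\int_{\mathbb{R}_+}\big(\id\otimes\tau(\nabla_tI_k(f))\big)\cdot\big(\tau\otimes\id(\nabla_tI_m(g))\big)\,\diff t\right).$$

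To compute the right-hand side I would invoke Proposition~\ref{nabla}: $\nabla_tI_k(f)=\sum_{j=1}^{k}(I_{j-1}\otimes I_{k-j})(f^j_t)$, which is a sum of elements of $\mathcal{H}_{j-1}\otimes\mathcal{H}_{k-j}$. Since $\tau$ vanishes on $\mathcal{H}_{k-j}$ whenever $j<k$, applying $\id\otimes\tau$ annihilates all terms except $j=k$, leaving $\id\otimes\tau(\nabla_tI_k(f))=I_{k-1}(f^k_t)$, where $f^k_t(t_1,\dots,t_{k-1})=f(t_1,\dots,t_{k-1},t)$; symmetrically $\tau\otimes\id(\nabla_tI_m(g))=I_{m-1}(g^1_t)$ with $g^1_t(t_1,\dots,t_{m-1})=g(t,t_1,\dots,t_{m-1})$. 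Proposition~\ref{productofI} then gives $I_{k-1}(f^k_t)\,I_{m-1}(g^1_t)=\sum_p I_{k+m-2-2p}(f^k_t\overset{p}{\frown}g^1_t)$, and applying $\tau$ retains only the scalar term, which forces $k=m$ and equals $f^k_t\overset{k-1}{\frown}g^1_t$; in particular both sides of the reduced identity vanish for $k\neq m$. Because $t\mapsto(\id\otimes\tau(\nabla_tI_k(f)))\cdot(\tau\otimes\id(\nabla_tI_k(g)))$ is Bochner-integrable in $L^2(\mathcal{A},\tau)$ --- by Cauchy--Schwarz in $t$, boundedness of $\nabla$ on $\mathcal{P}_k$, and Theorem~\ref{Haagerup} --- and $\tau=\langle1_{\mathcal{A}},\cdot\rangle_{L^2(\mathcal{A},\tau)}$ is $L^2$-continuous, $\tau$ may be exchanged with $\int_{\mathbb{R}_+}$. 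It then remains to check the scalar identity $\int_{\mathbb{R}_+}\big(f^k_t\overset{k-1}{\frown}g^1_t\big)\,\diff t=f\overset{k}{\frown}g=\tau(I_k(f)I_k(g))$, which follows by writing out both contractions, applying Fubini, and relabelling (reversing) the $k$ integration variables.

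The main obstacle is the variable bookkeeping. One must keep track of which coordinate of $f$ and of $g$ gets frozen at the integration variable $t$, and reconcile the order-reversals built into the definitions of the adjoint $f\mapsto f^*$ and of the contraction $\overset{p}{\frown}$, so as to confirm that freezing the \emph{last} coordinate of $f$, the \emph{first} coordinate of $g$, and integrating over $t$ reconstructs exactly $f\overset{k}{\frown}g$ rather than some permuted contraction. The functional-analytic ingredients --- bilinearity, $L^2$-continuity of both sides, and the exchange of $\tau$ with the $t$-integral --- are routine given Haagerup's inequality and the boundedness of $\nabla$ on $\mathcal{P}_n$.
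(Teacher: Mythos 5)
Your proof is correct and follows essentially the same route as the paper's: reduce to homogeneous Wigner integrals, identify $\id\otimes \tau(\nabla_t I_k(f))=I_{k-1}(f^k_t)$ and $\tau\otimes \id(\nabla_t I_m(g))=I_{m-1}(g^1_t)$, and match the resulting $t$-integral of contractions with $\delta_{k=m}\,f\overset{k}{\frown}g=\tau(AB)$. The only difference is cosmetic: the paper checks the computation on elementary tensors $f=f_1\otimes\cdots\otimes f_n$ and extends by linearity and continuity, whereas you work with general kernels by combining Proposition~\ref{nabla} with the orthogonality of the chaoses and the product formula.
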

\begin{proof}If $\tau(A)=0$, we can assume that $\tau(A)=\tau(B)=0$ by replacing $B$ by $B-\tau(B)1_{\mathcal{A}}$ if necessary: indeed, $\tau(A\tau(B))=0$ and $\nabla_t(\tau(B)1_{\mathcal{A}})=0$. For the same reason, if $\tau(B)=0$, we can assume that $\tau(A)=\tau(B)=0$.

By linearity, it suffices to prove the result when $A=I_n(f)\in \mathcal{H}_n$ and $B=I_m(g)\in \mathcal{H}_m$ with $n,m>0$. By linearity and continuity of both side, we can assume that $f=f_1\otimes \cdots \otimes f_n$ and $g=g_1\otimes \cdots \otimes f_m$. We have
$$\nabla_t A=\sum_{i=1}^nf_i(t)\cdot  I_{i-1}(f_1\otimes \cdots \otimes f_{i-1})\otimes  I_{n-i}(f_{i+1}\otimes \cdots \otimes f_{n}))$$
and applying $\id\otimes \tau$ gives us $\id\otimes \tau(\nabla_t A)=f_n(t)I_{n-1}(f_1\otimes \cdots \otimes f_{n-1})$. Similarly, $\tau\otimes \id(\nabla_t B)=g_1(t)I_{m-1}(g_2\otimes \cdots \otimes g_{m-1})$.
Finally,
\begin{align*}
&\hspace{-1cm}\tau\left(\int_{\mathbb{R}_+}(\id\otimes \tau(\nabla_t A))\cdot (\tau\otimes \id(\nabla_t B))\diff t\right)\\
&=\int_{\mathbb{R_+}}f_n(t)g_m(t)\diff t \cdot \tau(I_{n-1}(f_1\otimes \cdots \otimes f_{n-1})I_{m-1}(g_2\otimes \cdots \otimes g_{m-1}))\\
&=\int_{\mathbb{R_+}}f_n(t)g_m(t)\diff t \cdot \delta_{n-1=m-1}(f_1\otimes \cdots \otimes f_{n-1})\overset{n}{\frown} (g_2\otimes \cdots \otimes g_{n-1})\\
&=\delta_{n=m}f\overset{n}{\frown} g\\
& =\tau(AB).\qedhere
 \end{align*}
\end{proof}
Recall that, if $A \in \mathcal{P}_n$ and $B \in \mathcal{P}_n\otimes\mathcal{P}_n$, the left action $A\cdot B$ is the multiplication on the left leg:
$$A\cdot B=(A\otimes 1_{\mathcal{A}}) \sharp B.$$
\begin{corollary}\label{corStein}For all $F\in \mathcal{P}_n$ such that $\tau(F)=0$, we have, for all polynomial $P\in \mathbb{C}[X]$,
$$\tau(P(F)F)=\tau\otimes \tau\left(\partial P(F)\sharp \int_{\mathbb{R}_+}\nabla_tF\sharp \left((\tau\otimes \id(\nabla_t F))\otimes 1_{\mathcal{A}}\right)\diff t\right).$$
In other words, if $F$ is self-adjoint,
$$\int_{\mathbb{R}_+}(\id\otimes \tau(\nabla_tF))\cdot (\nabla_tF)^*\diff t$$
is a Stein kernel of $F$ which belongs to $\mathcal{P}_{2n}$.
\end{corollary}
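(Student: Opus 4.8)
The plan is to feed the chain rule (Proposition~\ref{chainrule}) into Lemma~\ref{lemmanabla} and then to recognise the outcome as the claimed right-hand side by means of a single bookkeeping identity for $\sharp$, $\tau\otimes\id$, $\id\otimes\tau$ and the involution on $\mathcal{P}_m\otimes\mathcal{P}_m$. Fix $P\in\mathbb{C}[X]$; if $P$ is constant both sides vanish (using $\tau(F)=0$ and $\partial P=0$), so assume $d:=\deg P\geq 1$. Then $P(F)$ and $F$ lie in $\mathcal{P}_{nd}$ and $\partial P(F),\nabla_tF$ in $\mathcal{P}_{nd}\otimes\mathcal{P}_{nd}$, so all the continuity and (bi)linearity statements of this section are available with $m=nd$. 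The first move is to apply Lemma~\ref{lemmanabla} with $A=P(F)$ and $B=F$, which is legitimate since $\tau(F)=0$, to get
$$\tau(P(F)F)=\int_{\mathbb{R}_+}\tau\Big(\big(\id\otimes\tau(\nabla_t(P(F)))\big)\cdot\big(\tau\otimes\id(\nabla_tF)\big)\Big)\,\diff t,$$
and then to substitute $\nabla_t(P(F))=\partial P(F)\sharp\nabla_tF$ from Proposition~\ref{chainrule}.

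The second move is to isolate the algebraic identity: for all $U,V,W\in\mathcal{P}_m\otimes\mathcal{P}_m$,
$$\tau\Big(\big(\id\otimes\tau(U\sharp V)\big)\cdot\big(\tau\otimes\id(W)\big)\Big)=\tau\otimes\tau\Big(U\sharp\big(V\sharp((\tau\otimes\id(W))\otimes 1_{\mathcal{A}})\big)\Big).$$
Each side is separately linear in $U$, in $V$ and in $W$, and every map occurring ($\sharp$, the left action, $\tau$, $\tau\otimes\tau$, $\tau\otimes\id$, $\id\otimes\tau$) is continuous on the relevant $\mathcal{P}_m$-pieces by Haagerup's inequality (Theorem~\ref{Haagerup}), so it suffices to verify the identity for elementary tensors $U=A\otimes B$, $V=C\otimes D$, $W=C'\otimes D'$, where a direct computation shows that both sides equal $\tau(C')\,\tau(DB)\,\tau(ACD')$. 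Applying this with $U=\partial P(F)$ and $V=W=\nabla_tF$, and then interchanging the integral over $\mathbb{R}_+$ with the bounded linear map $X\mapsto\tau\otimes\tau(\partial P(F)\sharp X)$ — licit because $t\mapsto\nabla_tF$ lies in $L^2(\mathbb{R}_+;\mathcal{P}_n\otimes\mathcal{P}_n)$, so that the integrand is Bochner-integrable by Cauchy--Schwarz — produces exactly the first displayed identity of the corollary.

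For the ``in other words'' part, set $\mathcal{A}_F:=\int_{\mathbb{R}_+}\nabla_tF\sharp\big((\tau\otimes\id(\nabla_tF))\otimes 1_{\mathcal{A}}\big)\,\diff t$; since $\sharp$ maps $(\mathcal{P}_n\otimes\mathcal{P}_n)^2$ into the closed subspace $\mathcal{P}_{2n}\otimes\mathcal{P}_{2n}$, we have $\mathcal{A}_F\in\mathcal{P}_{2n}\otimes\mathcal{P}_{2n}$. Using $(U\sharp V)^*=V^*\sharp U^*$, $(A\otimes B)^*=A^*\otimes B^*$, $1_{\mathcal{A}}^*=1_{\mathcal{A}}$ and $(U\otimes 1_{\mathcal{A}})\sharp V=U\cdot V$, one computes $\mathcal{A}_F^*=\int_{\mathbb{R}_+}\big(\tau\otimes\id(\nabla_tF)\big)^*\cdot(\nabla_tF)^*\,\diff t$. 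When $F=F^*$ it suffices, by linearity, to treat $F=I_n(f)$ with $f=f^*$: Proposition~\ref{nabla} gives $\tau\otimes\id(\nabla_tI_n(f))=I_{n-1}(f^1_t)$ and $\id\otimes\tau(\nabla_tI_n(f))=I_{n-1}(f^n_t)$, and since $f=f^*$ forces $(f^1_t)^*=f^n_t$, one gets $\big(\tau\otimes\id(\nabla_tF)\big)^*=\id\otimes\tau(\nabla_tF)$, hence $\mathcal{A}_F^*=\int_{\mathbb{R}_+}(\id\otimes\tau(\nabla_tF))\cdot(\nabla_tF)^*\,\diff t$. Finally, combining the first part with $\tau(P(F)F)=\langle F,P(F)\rangle_{L^2(\mathcal{A},\tau)}$ (valid because $F=F^*$ and $\tau$ is tracial), the relation $\langle A,B\rangle_{L^2(\mathcal{A},\tau)\otimes L^2(\mathcal{A},\tau)}=\tau\otimes\tau(B\sharp A^*)$, and $(\mathcal{A}_F^*)^*=\mathcal{A}_F$, one obtains $\langle F,P(F)\rangle=\tau\otimes\tau(\partial P(F)\sharp\mathcal{A}_F)=\langle\mathcal{A}_F^*,\partial P(F)\rangle$ for every $P\in\mathbb{C}[X]$, so $\mathcal{A}_F^*$ is a free Stein kernel of $F$ and it belongs to $\mathcal{P}_{2n}\otimes\mathcal{P}_{2n}$.

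The step I expect to require the most care is the symmetry $\big(\tau\otimes\id(\nabla_tF)\big)^*=\id\otimes\tau(\nabla_tF)$ for self-adjoint $F$, as it forces one to keep track of how the adjoint $f\mapsto f^*$ interacts with the identifications of $L^2(\mathbb{R}_+^n)$ with the tensor products $L^2(\mathbb{R}_+^{k-1})\otimes L^2(\mathbb{R}_+^{n-k})$ underlying Proposition~\ref{nabla}; the remaining ingredients — the elementary-tensor check of the algebraic identity, the passages to the limit, and the manipulations with $\sharp$ — are routine.
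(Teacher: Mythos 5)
Your proof is correct and follows essentially the same route as the paper: Lemma~\ref{lemmanabla} combined with the chain rule of Proposition~\ref{chainrule}, an elementary-tensor bookkeeping identity for $\sharp$, and passage to the adjoint via $\langle A,B\rangle_{L^2(\mathcal{A},\tau)\otimes L^2(\mathcal{A},\tau)}=\tau\otimes\tau(B\sharp A^*)$. The only substantive difference is that you verify the symmetry $(\tau\otimes\id(\nabla_tF))^*=\id\otimes\tau(\nabla_tF)$ for self-adjoint $F$ directly on the kernels, where the paper cites \cite[Lemma 4.6]{M2015}.
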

Before the demonstration, let us remark that the use of the divergence operator $\delta$ (the adjoint of the Malliavin derivative, which is not used in this article) gives an alternative proof of the above corollary. Indeed, one can compute $\delta ((\id\otimes \tau(\nabla F))\otimes 1_\mathcal{A})=F$, from which we deduce the following computation: 
\begin{multline*}
\tau(P(F)F^*)= \tau( P(F)(\delta ((\id\otimes \tau(\nabla F))))^*)=\int_{\mathbb{R}}\tau\otimes \tau( \nabla_t(P(F))\sharp (\id\otimes \tau(\nabla_tF))\otimes 1_\mathcal{A})^*)\diff s\\=\tau\otimes \tau\left(\partial P(F)\sharp \int_{\mathbb{R}_+}\nabla_tF\sharp \left((\tau\otimes \id(\nabla_t F))\otimes 1_{\mathcal{A}}\right)\diff t\right).
\end{multline*}
The same kind of computation allows Kemp, Nourdin, Peccati and Speicher to find the following Stein kernel (see \cite[Equation (4.16)]{KNPS2012}): for all $F=F^*\in \mathcal{H}_n$ such that $\tau(F)=0$,
$$\frac{1}{n}\int_{\mathbb{R}_+}(\nabla_tF)\cdot (\nabla_tF)^*\diff t$$
is a Stein kernel of $F$. However, as explained in the next section, it is more convenient to replace it by
$$\int_{\mathbb{R}_+}(\id\otimes \tau(\nabla_tF))\cdot (\nabla_tF)^*\diff t$$
in order to control the Stein discrepancy by the fourth moment.
\begin{proof}If $A\otimes B\in\mathcal{P}_n\otimes \mathcal{P}_n$ and $C\in \mathcal{P}_n$, we have $\tau((\id \otimes \tau(A\otimes B))\cdot C)=\tau(AC)\tau(B)=\tau\otimes \tau((A \otimes B)\sharp (C\otimes 1_{\mathcal{A}})).$
As a consequence, $\tau((\id \otimes \tau(D))\cdot C)=\tau\otimes \tau(D\sharp (C\otimes 1_{\mathcal{A}}))$ for all $D\in \mathcal{P}_n\otimes \mathcal{P}_n$ and $B\in \mathcal{P}_n$. Applied to Lemma~\ref{lemmanabla}, it gives us
$$\tau(P(F)F)= \tau\otimes \tau\left(\int_{\mathbb{R}_+}\nabla_t(P(F))\sharp \left((\tau\otimes \id(\nabla_t F))\otimes 1_{\mathcal{A}}\right)\diff t\right).$$
The first equality of the corollary is then a consequence of the chain rule of Proposition~\ref{chainrule}:  $\nabla_t(P(F))=\partial P(F)\sharp \nabla_t(F)$.

Because $\langle A^*,B \rangle_{L^2(\mathcal{A},\tau)\otimes L^2(\mathcal{A},\tau)}=\tau \otimes \tau (B\sharp A)$ for all $A,B\in \cup_n \mathcal{P}_n\otimes \mathcal{P}_n$, we know that the adjoint
$\int_{\mathbb{R}_+}\left((\tau\otimes \id(\nabla_t F))\otimes 1_{\mathcal{A}}\right)^*\sharp (\nabla_tF)^*\diff t$
of $\int_{\mathbb{R}_+}\nabla_tF\sharp \left((\tau\otimes \id(\nabla_t F))\otimes 1_{\mathcal{A}}\right)\diff t$
is a Stein kernel of $F$. We conclude by remarking that
$$\left((\tau\otimes \id(\nabla_t F))\otimes 1_{\mathcal{A}}\right)^*=(\id\otimes \tau)(\nabla_tF)\otimes 1_{\mathcal{A}},$$
where we used that $(\tau\otimes \id(\nabla_t F))^*=\id\otimes \tau(\nabla_t (F^*))=\id\otimes \tau(\nabla_t F)$ as shown in \cite[Lemma 4.6]{M2015}.
\end{proof}
\subsection{The Stein discrepancy and the fourth moment}

\begin{prop}\label{propFourthmoment}Let $n\geq 2$, and let $F$ be a self-adjoint element in the  homogeneous Wigner chaos $\mathcal{H}_n$ such that $\tau(F^2)=1$. Then,
$$\left\|\int_{\mathbb{R}_+}(\id\otimes \tau(\nabla_tF))\cdot (\nabla_tF)^*\diff t-1_{\mathcal{A}}\otimes 1_{\mathcal{A}}\right\|^2_{L^2(\mathcal{A},\tau)\otimes L^2(\mathcal{A},\tau)}\leq n^{3/2} \left(\tau(F^4)-2\right)^{1/2},$$
and in particular, we have the following bound on the Stein discrepancy:
$$\Sigma^*(F|S) \leq n^{3/4}\left(\tau(F^4)-2\right)^{1/4}.$$
\end{prop}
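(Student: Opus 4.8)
Since $F=F^*\in\mathcal{H}_n$, write $F=I_n(f)$ with $f=f^*\in L^2(\mathbb{R}_+^n)$ and $\|f\|_{L^2(\mathbb{R}_+^n)}^2=\tau(F^2)=1$, and denote by $\mathcal{A}:=\int_{\mathbb{R}_+}(\id\otimes\tau(\nabla_tF))\cdot(\nabla_tF)^*\,\diff t$ the free Stein kernel of $F$ given by Corollary~\ref{corStein}, which lies in $\mathcal{P}_{2n}\otimes\mathcal{P}_{2n}$. Applying the defining relation of a free Stein kernel to $P(X)=X$, for which $\partial P=1_{\mathcal{A}}\otimes 1_{\mathcal{A}}$, gives $\langle\mathcal{A},1_{\mathcal{A}}\otimes 1_{\mathcal{A}}\rangle=\langle F,F\rangle=1$, hence
$$\big\|\mathcal{A}-1_{\mathcal{A}}\otimes 1_{\mathcal{A}}\big\|^2_{L^2(\mathcal{A},\tau)\otimes L^2(\mathcal{A},\tau)}=\|\mathcal{A}\|^2_{L^2(\mathcal{A},\tau)\otimes L^2(\mathcal{A},\tau)}-1.$$
So it is enough to bound $\|\mathcal{A}\|^2-1$; the second inequality then follows from $\Sigma^*(F|S)\le\|\mathcal{A}-1_{\mathcal{A}}\otimes 1_{\mathcal{A}}\|$ by taking square roots.

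\textbf{Expanding the Stein kernel.} By Proposition~\ref{nabla}, $\nabla_tF=\sum_{k=1}^n(I_{k-1}\otimes I_{n-k})(f^k_t)$; since $\tau\circ I_m=0$ for $m\ge 1$, only the $k=n$ term survives $\id\otimes\tau$, so $\id\otimes\tau(\nabla_tF)=I_{n-1}(f^n_t)$, and similarly $(\nabla_tF)^*=\sum_{k=1}^n(I_{k-1}\otimes I_{n-k})(g^k_t)$, where $g^k_t$ is obtained from $f^k_t$ by reversing each of the two tensor legs (the conjugation being absorbed using $f=f^*$). Thus $\mathcal{A}=\sum_{k=1}^n\mathcal{A}_k$ with $\mathcal{A}_k=\int_{\mathbb{R}_+}I_{n-1}(f^n_t)\cdot(I_{k-1}\otimes I_{n-k})(g^k_t)\,\diff t$. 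Left multiplication does not touch the right leg, so $\mathcal{A}_k\in\mathcal{P}_{2n-2}\otimes\mathcal{H}_{n-k}$, and since $\mathcal{H}_{n-k}\perp\mathcal{H}_{n-k'}$ for $k\neq k'$, the sum is orthogonal and $\|\mathcal{A}\|^2=\sum_{k=1}^n\|\mathcal{A}_k\|^2$. Applying the product formula (Proposition~\ref{productofI}) on the left leg,
$$I_{n-1}(f^n_t)\cdot(I_{k-1}\otimes I_{n-k})(g^k_t)=\sum_{q=0}^{(n-1)\wedge(k-1)}(I_{n+k-2-2q}\otimes I_{n-k})\big(C^{k,q}_t\big),$$
with $C^{k,q}_t\in L^2(\mathbb{R}_+^{2n-2-2q})$ the order-$q$ contraction of $f^n_t$ with the left slots of $g^k_t$. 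Since $I_{n+k-2-2q}\otimes I_{n-k}$ is an isometry and distinct $q$ give orthogonal left chaoses, $\|\mathcal{A}_k\|^2=\sum_{q=0}^{(n-1)\wedge(k-1)}\big\|\int_{\mathbb{R}_+}C^{k,q}_t\,\diff t\big\|^2_{L^2(\mathbb{R}_+^{2n-2-2q})}$.

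\textbf{Isolating the constant and estimating.} Integrating out $t$ identifies the slot $t$ of $f^n_t$ with that of $g^k_t$, so $\int_{\mathbb{R}_+}C^{k,q}_t\,\diff t$ is a (permuted) contraction of $f$ with $f$ over exactly $p:=q+1$ honest pairs of slots. For $(k,q)=(n,n-1)$ this is a complete contraction, hence a scalar, and unfolding it and using $f=f^*$ shows it equals $\|f\|^2=1$; that single summand contributes $\|1_{\mathcal{A}}\otimes 1_{\mathcal{A}}\|^2=1$, which is exactly the $-1$ of the reduction step, and it is the only complete contraction. Every other summand is the squared $L^2$-norm of a contraction of $f$ with $f$ over $p$ pairs with $1\le p\le n-1$; being a pairing integral in four copies of $f$ that is admissible in the sense of Proposition~\ref{propJanson}, it is bounded — after grouping the two copies of $f$ that were contracted together and invoking the Cauchy–Schwarz inequality together with Proposition~\ref{propJanson} — by $\|f\overset{p}{\frown}f^*\|_{L^2(\mathbb{R}_+^{2n-2p})}$ for a suitable $1\le p\le n-1$. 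For each fixed $p$ the number of surviving pairs $(k,q)$ with $q+1=p$ is $n-p+1\le n$, so
$$\|\mathcal{A}\|^2-1\ \le\ n\sum_{p=1}^{n-1}\big\|f\overset{p}{\frown}f^*\big\|_{L^2(\mathbb{R}_+^{2n-2p})}.$$
By Proposition~\ref{propfourth} (with $f=f^*$), $\sum_{p=1}^{n-1}\|f\overset{p}{\frown}f^*\|^2=\tau(F^4)-2$, and by Cauchy–Schwarz $\sum_{p=1}^{n-1}\|f\overset{p}{\frown}f^*\|\le\sqrt{n-1}\,(\tau(F^4)-2)^{1/2}$. Hence $\|\mathcal{A}-1_{\mathcal{A}}\otimes 1_{\mathcal{A}}\|^2\le n\sqrt{n-1}\,(\tau(F^4)-2)^{1/2}\le n^{3/2}(\tau(F^4)-2)^{1/2}$, and $\Sigma^*(F|S)\le\|\mathcal{A}-1_{\mathcal{A}}\otimes 1_{\mathcal{A}}\|\le n^{3/4}(\tau(F^4)-2)^{1/4}$.

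\textbf{Main obstacle.} The delicate point is the estimate just sketched: one must, pair $(k,q)$ by pair $(k,q)$, describe precisely how the underlying admissible pairing splits so as to be dominated by a genuine contraction norm $\|f\overset{p}{\frown}f^*\|$ with $1\le p\le n-1$, while simultaneously tracking the reversal/adjoint operations defining $g^k_t$ and $(\cdot)^*$ and the multiplicities, so that the combinatorial prefactor comes out of order exactly $n$. A subsidiary but necessary check is that $(k,q)=(n,n-1)$ is the \emph{only} complete contraction, so that $1_{\mathcal{A}}\otimes 1_{\mathcal{A}}$ is extracted cleanly and no other $O(1)$ term remains.
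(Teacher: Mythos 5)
Your proposal is correct and follows essentially the same route as the paper: expand $\id\otimes\tau(\nabla_tF)=I_{n-1}(f^n_t)$ and $(\nabla_tF)^*$ via Proposition~\ref{nabla}, apply the product formula on the left leg, use orthogonality of the homogeneous chaoses to split the squared norm, identify the unique complete contraction as the $1_{\mathcal{A}}\otimes 1_{\mathcal{A}}$ term, bound each remaining squared kernel norm by a first-power contraction norm $\|f\overset{p}{\frown}f^*\|$ via Proposition~\ref{propJanson}, and conclude with Proposition~\ref{propfourth} and Cauchy--Schwarz. Your small variations (reducing to $\|A\|^2-1$ via $\langle A,1_{\mathcal{A}}\otimes 1_{\mathcal{A}}\rangle=\tau(F^2)=1$, and organizing the multiplicity count by contraction order rather than by $k$) are cosmetic and yield the same $n^{3/2}$ constant; the step you flag as the ``main obstacle'' is exactly the explicit pairing-integral bookkeeping the paper carries out, and your sketch of it is accurate.
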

This proposition has to be compared with \cite[Theorem 3.7]{BC2017}, which states that there exists a certain constant $C_n$ such that
$$\left\|\frac{1}{n}\int_{\mathbb{R}_+}(\nabla_tF)\cdot (\nabla_tF)^*\diff t-1_{\mathcal{A}}\otimes 1_{\mathcal{A}}\right\|^2_{L^2(\mathcal{A},\tau)\otimes L^2(\mathcal{A},\tau)}\leq C_n \left(\tau(F^4)-2\right)$$
for $F$ in a particular subset of the self-adjoint elements in $\mathcal{H}_n$ (the fully symmetric elements). Unfortunately, as explained in \cite[Remark 3.9]{BC2017}, this inequality fails to be true for all self-adjoint element in $\mathcal{H}_n$. This explains the necessity of replacing the Stein kernel {$\frac{1}{n}\int_{\mathbb{R}_+}(\nabla_tF)\cdot (\nabla_tF)^*\diff t$} by $\int_{\mathbb{R}_+}(\id\otimes \tau(\nabla_tF))\cdot (\nabla_tF)^*\diff t$.
\begin{proof}Thanks to Proposition~\ref{corStein},
$$\int_{\mathbb{R}_+}(\id\otimes \tau(\nabla_tF))\cdot (\nabla_tF)^*\diff t $$ 
is a Stein kernel, and it suffices to prove the first inequality.

Let us write $F=I_n(f)$ for some $f=f^*\in L^2(\mathbb{R}_+^n)$. We define 
$\tilde{f}^k_t\in L^2(\mathbb{R}_+^{n-1})$  (for almost all $t\in \mathbb{R}_+$) by
$$\overline{f(t_1,\ldots,t_{k-1},t,t_{k+1},\ldots,t_n)}=\tilde{f}^k_t(t_{k-1},t_{k-2},\ldots,t_1,t_n,t_{n-1},\ldots,t_{k+1}).$$
Remark that $([I_{k-1}\otimes I_{n-k}](f^k_t))^*=[I_{k-1}\otimes I_{n-k}](\tilde{f}^k_t)$ holds for almost all $t\geq 0$ if $f=f_1\otimes \cdots \otimes f_n$, and by linearity and density, it holds for all $f\in L^2(\mathbb{R}_+^n)$.

Let us compute
\begin{align*}
(\id\otimes \tau)(\nabla_tF)\cdot (\nabla_tF)^*&=\left((\id\otimes \tau)\left(\sum_{k=1}^n[I_{k-1}\otimes I_{n-k}](f^k_t)\right)\right)\cdot \left(\sum_{k=1}^n[I_{k-1}\otimes I_{n-k}](f^k_t)\right)^*
\\&=I_{n-1}(f^n_t)\cdot\left(\sum_{k=1}^n[I_{k-1}\otimes I_{n-k}](\tilde{f}^k_t)\right)\\
&=\sum_{k=1}^n\sum_{p=0}^{k-1}[I_{n-1+k-1-2p}\otimes I_{n-k}](f^n_t \overset{p}{\frown} \tilde{f}^k_t)
\end{align*}
for almost all $t\geq 0$,
where the equality $$I_{n-1}(f^n_t)\cdot[I_{k-1}\otimes I_{n-k}](\tilde{f}^k_t)=\sum_{p=0}^{k-1}[I_{n-1+k-1-2p}\otimes I_{n-k}](f^n_t \overset{p}{\frown} \tilde{f}^k_t)$$ is proved by density, starting from functions of type $f^n_t=h_1\otimes \cdots \otimes h_{n-1}$ and $\tilde{f}^k_t=g_1\otimes \cdots \otimes g_{n-1}$ thanks to Proposition~\ref{productofI}.

Remark that, when $k=n$ and $p=n-1$, $f^n_t \overset{p}{\frown} \tilde{f}^k_t$ is the constant
$$\int_{\mathbb{R}_+^{n-1}}|f(t_1,\ldots,t_{n-1},t)|^2\diff t_1 \cdots \diff t_{n-1}$$
and, when integrating the corresponding term $[I_{n-1+k-1-2p}\otimes I_{n-k}](f^n_t \overset{p}{\frown} \tilde{f}^k_t)=(f^n_t \overset{p}{\frown} \tilde{f}^k_t)\cdot 1_{\mathcal{A}}\otimes 1_{\mathcal{A}}$ over $t$, we get \smash{$\|f\|_{L^2(\mathbb{R}_+^{n})}^2\cdot 1_{\mathcal{A}}\otimes 1_{\mathcal{A}}=\tau(F^2)\cdot 1_{\mathcal{A}}\otimes 1_{\mathcal{A}}=1_{\mathcal{A}}\otimes 1_{\mathcal{A}}$}. Consequently, it will disappear in the following computation:
\begin{align*}
&\hspace{-1cm}\left\|\int_{\mathbb{R}_+}(\id\otimes \tau)(\nabla_tF)\cdot (\nabla_tF)^*\diff t-1_{\mathcal{A}}\otimes 1_{\mathcal{A}}\right\|^2_{L^2(\mathcal{A},\tau)\otimes L^2(\mathcal{A},\tau)}\\
&=\left\|\sum_{k=1}^{n-1}\sum_{p=0}^{k-1}\int_{\mathbb{R}_+}[I_{n-1+k-1-2p}\otimes I_{n-k}](f^n_t \overset{p}{\frown} \tilde{f}^k_t)\diff t\right.\\
&\hspace{3cm}\left.+\sum_{p=0}^{n-1}\int_{\mathbb{R}_+}[I_{2n-2p-2}\otimes I_{n-k}](f^n_t \overset{p}{\frown} \tilde{f}^n_t)\diff t-1_{\mathcal{A}}\otimes 1_{\mathcal{A}}\right\|^2_{L^2(\mathcal{A},\tau)\otimes L^2(\mathcal{A},\tau)}\\
&=\sum_{k=1}^{n-1}\sum_{p=0}^{k-1}\left\|\int_{\mathbb{R}_+}(f^n_t \overset{p}{\frown} \tilde{f}^k_t)\diff t\right\|^2_{L^2(\mathbb{R}_+^{2n-2p-2})}+\sum_{p=0}^{n-2}\left\|\int_{\mathbb{R}_+}(f^n_t \overset{p}{\frown} \tilde{f}^n_t)\diff t\right\|^2_{L^2(\mathbb{R}_+^{2n-2p-2})}.
\end{align*}
Now, fixing $k$ and $p$, let us compute
\begin{align*}&f^n_t \overset{p}{\frown} \tilde{f}^k_t(t_1,\ldots,t_{n-p-1},r_1,\ldots,r_{n-p-1})\\&=\int_{\mathbb{R}_+^{p}}f^n_t(t_1,\ldots,t_{n-p-1},s_p,\ldots,s_1)\tilde{f}^k_t(s_1,\ldots,s_p,r_1,\ldots,r_{n-p-1})\diff s_1\cdots \diff s_p\\
&=\int_{\mathbb{R}_+^{p}}f(t_1,\ldots,t_{n-p-1},s_p,\ldots,s_1,t)\overline{f(r_{k-p-1},\cdots,r_1,s_p,\ldots,s_1,t,r_{n-p-1},\ldots,r_{k-p})}\diff s_1\cdots \diff s_p
\end{align*}
from which we deduce that
\begin{align*}
&\hspace{-1.5cm}\left\|\int_{\mathbb{R}_+}(f^n_t \overset{p}{\frown} \tilde{f}^k_t)\diff t\right\|^2_{L^2(\mathbb{R}_+^{2n-2p-2})}\\
=\int_{\mathbb{R}_+^{2n}}& f(t_1,\ldots,t_{n-p-1},s_p,\ldots,s_1,t)\overline{f(r_{k-p-1},\cdots,r_1,s_p,\ldots,s_1,t,r_{n-p-1},\ldots,r_{k-p})}\\
&\cdot \overline{f(t_1,\ldots,t_{n-p-1},s'_p,\ldots,s'_1,t')}f(r_{k-p-1},\cdots,r_1,s'_p,\ldots,s'_1,t',r_{n-p-1},\ldots,r_{k-p})\\
&\hspace{4cm}\diff s_1\cdots \diff s_p\ \diff t\  \diff s'_1\cdots \diff s'_p\ \diff t'\  \diff t_1 \ldots \diff t_{n-p-1}\ \diff r_1 \ldots \diff r_{n-p-1}.
\end{align*}
Because $f=f^*$, we have
\begin{align*}
&\hspace{-1.5cm}\left\|\int_{\mathbb{R}_+}(f^n_t \overset{p}{\frown} \tilde{f}^k_t)\diff t\right\|^2_{L^2(\mathbb{R}_+^{2n-2p-2})}\\
=\int_{\mathbb{R}_+^{2n}}& f(t_1,\ldots,t_{n-p-1},s_p,\ldots,s_1,t)f(r_{k-p},\ldots,r_{n-p-1},t,s_1,\ldots,s_p,r_1,\ldots,r_{k-p-1})\\
&\cdot f(t',s'_1,\ldots,s'_p, t_{n-p-1},\ldots,t_1)f(r_{k-p-1},\cdots,r_1,s'_p,\ldots,s'_1,t',r_{n-p-1},\ldots,r_{k-p})\\
&\hspace{4cm}\diff s_1\cdots \diff s_p\ \diff t\  \diff s'_1\cdots \diff s'_p\ \diff t'\  \diff t_1 \ldots \diff t_{n-p-1}\ \diff r_1 \ldots \diff r_{n-p-1}.
\end{align*}
If we integrate over $\diff t_1 \ldots \diff t_{n-p-1}$, we get $f \overset{n-p-1}{\frown} f$.
Thus, we are left to
\smash{$\int_\pi  (f \overset{n-p-1}{\frown} f)\otimes f \otimes f $}
for some pairing $\pi\in \mathcal{P}_2((2p+2)\otimes n \otimes n)$. Using Proposition~\ref{propJanson} leads to the following bounds
\begin{align*}
&\hspace{-1.5cm}\left\|\int_{\mathbb{R}_+}(f^n_t \overset{p}{\frown} \tilde{f}^k_t)\diff t\right\|^2_{L^2(\mathbb{R}_+^{2n-2p-2})}\leq \left|\int_\pi  (f \overset{n-p-1}{\frown} f)\otimes f\otimes f\right|\\
&\leq \left\|f \overset{n-p-1}{\frown} f\right\|_{L^2(\mathbb{R}_+^{2p+2})}\cdot \left\|f\right\|_{L^2(\mathbb{R}_+^{n})}\cdot \left\|f\right\|_{L^2(\mathbb{R}_+^{n})}= \left\|f \overset{n-p-1}{\frown} f\right\|_{L^2(\mathbb{R}_+^{2p+2})}.
\end{align*}

The norm of the contraction which appears can be controlled by $\tau(F^4)-2$ in the following way: Proposition~\ref{propfourth} ensures that \smash{$\sum_{i=1}^{n-1}\|f \overset{i}{\frown} f\|_{_{L^2(\mathbb{R}_+^{2n-2i})}}^2$} is bounded by $\tau(I_n(f)^4)-2=\tau(F^4)-2$, and consequently,
$$ \sum_{i=1}^{n-1}\left\|f \overset{i}{\frown} f\right\|_{L^2(\mathbb{R}_+^{2n-i})}
\leq \sqrt{n-1}\left(\sum_{i=1}^{n-1}\left\|f \overset{i}{\frown} f\right\|_{L^2(\mathbb{R}_+^{2n-i})}^2\right)^{1/2}\leq\sqrt{n} \left(\tau(F^4)-2\right)^{1/2}
.$$

Finally, \begin{align*}
&\hspace{-1cm}\left\|\int_{\mathbb{R}_+}(\id\otimes \tau)(\nabla_tF)\cdot (\nabla_tF)^*\diff t-1\otimes 1\right\|^2_{L^2(\mathcal{A},\tau)\otimes L^2(\mathcal{A},\tau)}\\&=
\sum_{k=1}^{n-1}\sum_{p=0}^{k-1}\left\|\int_{\mathbb{R}_+}(f^n_t \overset{p}{\frown} \tilde{f}^k_t)\diff t\right\|^2_{L^2(\mathbb{R}_+^{2n-2p-2})}+\sum_{p=0}^{n-2}\left\|\int_{\mathbb{R}_+}(f^n_t \overset{p}{\frown} \tilde{f}^n_t)\diff t\right\|^2_{L^2(\mathbb{R}_+^{2n-2p-2})}\\
&\leq\sum_{k=1}^{n-1}\sum_{p=0}^{k-1}\left\|f \overset{n-p-1}{\frown} f\right\|_{L^2(\mathbb{R}_+^{2p+2})}+\sum_{p=0}^{n-2}\left\|f \overset{n-p-1}{\frown} f\right\|_{L^2(\mathbb{R}_+^{2p+2})}\\
&\leq\sum_{k=1}^{n-1}\sqrt{n} \left(\tau(F^4)-2\right)^{1/2}+\sqrt{n} \left(\tau(F^4)-2\right)^{1/2}\\
 &\leq n^{3/2} \left(\tau(F^4)-2\right)^{1/2}.\qedhere
\end{align*}
\end{proof}
\subsection{Fourth moment theorems in the Wigner chaos}\label{Fully}Given $n\geq 1$, $f\in L^2(\mathbb{R}_+^n)$ is called \emph{mirror symmetric} if $f=f^*$. An element $F$ of the homogeneous Wigner chaos $\mathcal{H}_n$ is self-adjoint if and only if $F=I_n(f)$ for a certain mirror symmetric $f\in L^2(\mathbb{R}_+^n)$, which allows us to state the free fourth moment theorem, due to Kemp, Nourdin, Peccati and Speicher, in the following form.

\begin{theorem}[Theorem 1.3 of \cite{KNPS2012}]Let $n\geq 2$, and let $(F_k)_{k\geq 1}$ be a sequence of self-adjoint element in the  homogeneous Wigner chaos $\mathcal{H}_n$, each with $\tau(F_k^2)=1$. The distribution of $F_k$ converges weakly to the distribution of a semi-circular variable $S$ if and only if $\tau(F_k^4)$ converges to $\tau(S^4)=2$ as $k$ tends to $\infty$.
\end{theorem}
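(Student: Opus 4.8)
The plan is to obtain both implications directly from the quantitative estimate of Theorem~\ref{mainth}, combined with the $L^\infty$--$L^2$ comparison of Haagerup's inequality (Theorem~\ref{Haagerup}); the real content of the fourth moment theorem has already been packaged into those two statements.

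First I would treat the implication ``$\tau(F_k^4)\to 2$ implies weak convergence''. Each $F_k$ is self-adjoint, lies in $\mathcal{H}_n$, and satisfies $\tau(F_k^2)=1$, so Theorem~\ref{mainth} applies and yields $W_2(F_k,S)\le n^{3/4}(\tau(F_k^4)-2)^{1/4}$. By Proposition~\ref{propfourth} one has $\tau(F_k^4)\ge 2$, so the right-hand side is well defined, and it tends to $0$ by hypothesis. Since convergence in the classical $2$-Wasserstein distance on $\mathbb{R}$ implies convergence in distribution, the law of $F_k$ converges weakly to the semicircular law, which is the law of $S$.

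Next I would treat the converse. Assume the distribution $\mu_k$ of $F_k$ converges weakly to the semicircular law $\mu$. By Theorem~\ref{Haagerup}, $\|F_k\|_{\mathcal{A}}\le (n+1)\|F_k\|_{L^2(\mathcal{A},\tau)}=n+1$, so the measures $\mu_k$ are all supported in the fixed compact interval $[-(n+1),n+1]$, and the limit $\mu$ is supported in the same interval. For a sequence of probability measures with common compact support, weak convergence upgrades to convergence of all moments, so in particular $\tau(F_k^4)=\int_{\mathbb{R}}x^4\,\diff\mu_k(x)\to\int_{\mathbb{R}}x^4\,\diff\mu(x)=\tau(S^4)=2$.

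I do not expect a genuine obstacle in this last step: both directions are short once Theorem~\ref{mainth} and Theorem~\ref{Haagerup} are in hand. The only items worth spelling out are the two elementary facts invoked above — that $W_2$-convergence on $\mathbb{R}$ dominates the weak topology, and that weak convergence of uniformly compactly supported probability measures implies convergence of moments — both of which are standard.
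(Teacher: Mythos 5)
Your argument is correct, and it is worth noting that the paper itself offers no proof of this statement: it is quoted verbatim as Theorem 1.3 of \cite{KNPS2012}, where the original argument proceeds quite differently (one shows that $\tau(F_k^4)\to 2$ is equivalent to the vanishing of all nontrivial contractions $f_k\overset{p}{\frown}f_k^*$, and then that this forces all moments of $F_k$ to converge to the Catalan numbers via the pairing combinatorics of Proposition~\ref{productofI}). Your route instead deduces the qualitative theorem from the paper's quantitative Theorem~\ref{mainth}, which is legitimate and free of circularity: Theorem~\ref{mainth} rests only on Proposition~\ref{propWS} and Proposition~\ref{propFourthmoment}, neither of which invokes the result being proved. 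The two small facts you lean on are both sound here --- Proposition~\ref{propfourth} gives $\tau(F_k^4)=2+\sum_{p=1}^{n-1}\|f_k\overset{p}{\frown}f_k^*\|^2\geq 2$ so the fourth root is well defined, and $W_2$-convergence on $\mathbb{R}$ dominates weak convergence; for the converse, Haagerup's inequality (Theorem~\ref{Haagerup}) does confine all the spectra to $[-(n+1),n+1]$, and the semicircular limit is supported in $[-2,2]$, so truncating $x^4$ to a bounded continuous function gives $\tau(F_k^4)\to\tau(S^4)=2$. What your approach buys is brevity and an explicit rate in one direction (at the cost of presupposing the full Stein-discrepancy machinery); what the original approach buys is independence from that machinery and the finer equivalence with contraction convergence, which is the form actually used elsewhere in \cite{KNPS2012}.
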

A natural question is whether it is possible to quantify the convergence in the fourth moment theorem, as in the classical case. Kemp, Nourdin, Peccati and Speicher give an answer in the $2$-th chaos, using the following distance. For all self-adjoint random variables $A$ and $B$ in $(\mathcal{A},\tau)$, the distance $d_{\mathcal{C}_2}$, introduced in \cite{KNPS2012}, is defined by
$$d_{\mathcal{C}_2}(A,B):=\sup_{\substack{h\in \mathcal{C}_2\\ \mathcal{I}_2(h)\leq 1}}\left|\tau(h(A))-\tau(h(B))\right|,$$ where $\mathcal{C}_2$ is the set of Fourier transforms $h:\mathbb{R}\to \mathbb{C}$ of complex measures $\mu$ such that the quantity $\mathcal{I}_2(h):=\int_\mathbb{R}x^2|\mu|(\diff x)$ is finite.

\begin{prop}[Corollary 1.12 of \cite{KNPS2012}]Let $F$ be a self-adjoint element in the  homogeneous Wigner chaos $\mathcal{H}_2$, such that $\tau(F^2)=1$. Then,
$$d_{\mathcal{C}_2}(F,S)  \leq \frac{1}{2}\sqrt{\frac{3}{4}}\left(\tau(F^4)-2\right)^{1/2}.$$
\end{prop}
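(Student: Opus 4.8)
I would follow the original argument of \cite[Corollary~1.12]{KNPS2012}, rephrased in the language of Section~\ref{Sec:Wigner}. The plan is in three steps.

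\emph{Reduction to characteristic functions.} Writing $h=\hat{\mu}\in\mathcal{C}_2$ with $\mathcal{I}_2(h)=\int_{\mathbb{R}}\xi^2\,|\mu|(\diff\xi)\le1$ and setting $\varphi_F(\xi):=\tau(e^{i\xi F})$, $\varphi_S(\xi):=\tau(e^{i\xi S})$, Fubini (legitimate since $|\mu|$ is finite and $|\varphi_F|,|\varphi_S|\le1$) gives $\tau(h(F))-\tau(h(S))=\int_{\mathbb{R}}(\varphi_F(\xi)-\varphi_S(\xi))\,\mu(\diff\xi)$. As $\tau(F)=\tau(S)=0$ and $\tau(F^2)=\tau(S^2)=1$, the smooth function $\varphi_F-\varphi_S$ has a zero of order at least three at the origin and is bounded by $2$, so $\xi\mapsto\xi^{-2}(\varphi_F(\xi)-\varphi_S(\xi))$ is bounded on $\mathbb{R}$ and $|\tau(h(F))-\tau(h(S))|\le\big(\sup_{\xi\ne0}\xi^{-2}|\varphi_F(\xi)-\varphi_S(\xi)|\big)\,\mathcal{I}_2(h)$. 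It thus suffices to prove $\sup_{\xi\ne0}\xi^{-2}|\varphi_F(\xi)-\varphi_S(\xi)|\le\tfrac12\sqrt{3/4}\,(\tau(F^4)-2)^{1/2}$.

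\emph{An integro-differential equation.} Let $\Gamma_F\in\mathcal{P}_4$ be the free Stein kernel of $F$ given by Corollary~\ref{corStein} (with $n=2$). Using $\partial(e^{i\xi X})=i\xi\int_0^1 e^{i\xi sX}\otimes e^{i\xi(1-s)X}\,\diff s$ — an identity for polynomials that extends to $e^{i\xi X}$ by norm convergence of the exponential series, $F$ being bounded (Theorem~\ref{Haagerup}) and $\nabla$ bounded on $\mathcal{P}_2$ (Proposition~\ref{domainnabla}) — the defining relation of $\Gamma_F$ yields $\varphi_F'(\xi)=i\,\tau(Fe^{i\xi F})=-\xi\int_0^1\langle\Gamma_F,\,e^{i\xi sF}\otimes e^{i\xi(1-s)F}\rangle_{L^2(\mathcal{A},\tau)\otimes L^2(\mathcal{A},\tau)}\,\diff s$. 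For the standard semicircular variable the Schwinger--Dyson equation is precisely this relation with $\Gamma_S=1_{\mathcal{A}}\otimes1_{\mathcal{A}}$, i.e. $\varphi_S'(\xi)=-\xi\int_0^1\varphi_S(\xi s)\varphi_S(\xi(1-s))\,\diff s$. Subtracting, $D:=\varphi_F-\varphi_S$ satisfies $D(0)=0$ and $D'(\xi)=-\xi\int_0^1\big(\varphi_S(\xi s)D(\xi(1-s))+D(\xi s)\varphi_F(\xi(1-s))\big)\diff s-\xi\int_0^1\langle\Gamma_F-1_{\mathcal{A}}\otimes1_{\mathcal{A}},\,e^{i\xi sF}\otimes e^{i\xi(1-s)F}\rangle\,\diff s$.

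\emph{Estimate of the source term and conclusion.} Computing $\nabla_tF$ from Proposition~\ref{nabla} and the products from Proposition~\ref{productofI}, one finds that for $F=I_2(f)$ the element $\Gamma_F-1_{\mathcal{A}}\otimes1_{\mathcal{A}}$ splits into a summand in $\mathcal{H}_1\otimes\mathcal{H}_1$ and the summand $I_2(f\overset{1}{\frown}f)\otimes1_{\mathcal{A}}\in\mathcal{H}_2\otimes\mathcal{H}_0$. Since $F\in\mathcal{H}_2$ forces $e^{i\xi sF}\in\bigoplus_{k\text{ even}}\mathcal{H}_k$, the $\mathcal{H}_1\otimes\mathcal{H}_1$ summand drops out of the inner product in the last display, and because $\|I_2(f\overset{1}{\frown}f)\|_{L^2(\mathcal{A},\tau)}=\|f\overset{1}{\frown}f\|_{L^2(\mathbb{R}_+^2)}=(\tau(F^4)-2)^{1/2}$ by Proposition~\ref{propfourth}, Cauchy--Schwarz bounds that source term by $|\xi|\,(\tau(F^4)-2)^{1/2}$. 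Inserting this into the integro-differential relation for $D$ and running a Gr\"onwall-type argument — in which the quadratic feedback $\varphi_S(\xi s)D(\xi(1-s))+D(\xi s)\varphi_F(\xi(1-s))$ is tamed for large $\xi$ by the genuine decay of $\varphi_S$ (the Bessel function $\xi\mapsto\xi^{-1}J_1(2\xi)$) — gives $\sup_{\xi\ne0}\xi^{-2}|D(\xi)|\le\tfrac12\sqrt{3/4}\,(\tau(F^4)-2)^{1/2}$. The main obstacle is exactly this last step: a crude estimate already produces a bound of the form $c\,(\tau(F^4)-2)^{1/2}$, so all the difficulty lies in the global analysis of the integro-differential inequality needed to pin the constant down to $c=\tfrac12\sqrt{3/4}$; the remaining points — Fubini, the extension from polynomials to $e^{i\xi\,\cdot}$, and the chaos decomposition of $\Gamma_F$ — are routine.
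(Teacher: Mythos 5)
The paper does not prove this proposition at all: it is imported verbatim as Corollary~1.12 of \cite{KNPS2012}, so there is no internal proof to compare against and your attempt has to stand on its own. Your first two steps are sound and are indeed the standard Nourdin--Peccati/KNPS strategy (reduction to $\sup_{\xi\neq 0}\xi^{-2}|\varphi_F(\xi)-\varphi_S(\xi)|$, the identity $\partial e^{i\xi X}=i\xi\int_0^1 e^{i\xi sX}\otimes e^{i\xi(1-s)X}\,\diff s$, the resulting equation for $\varphi_F'$), and your chaos decomposition of $\Gamma_F-1_{\mathcal{A}}\otimes 1_{\mathcal{A}}$ together with the parity argument killing the $\mathcal{H}_1\otimes\mathcal{H}_1$ component is correct.

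The proof nevertheless has a genuine gap, and it is exactly where you park the difficulty. First, the Gr\"onwall argument does not close as described: the feedback term contains $D(\xi s)\varphi_F(\xi(1-s))$, and $\varphi_F$ has no quantitative decay whatsoever (it is merely bounded by $1$), so the decay of $\varphi_S$ tames only half of the feedback; a Gr\"onwall estimate on $|D|$ then produces a factor of order $e^{c\xi^2}$ and no uniform bound on $\xi^{-2}|D(\xi)|$ follows. Second, even if the feedback could be discarded, your source-term bound is $|\xi|\,(\tau(F^4)-2)^{1/2}$ (the surviving $\mathcal{H}_2\otimes\mathcal{H}_0$ component of the kernel of Corollary~\ref{corStein} has norm exactly $\|f\overset{1}{\frown}f\|=(\tau(F^4)-2)^{1/2}$), and integrating it from $0$ to $\xi$ yields at best the constant $\tfrac12$, which is strictly larger than $\tfrac12\sqrt{3/4}$; the constant $\sqrt{3/4}$ cannot emerge from this route. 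The argument that actually works replaces the stationary integro-differential equation by the interpolation $F_t=e^{-t}F+\sqrt{1-e^{-2t}}\,S$, writing $\tau(h(F))-\tau(h(S))=-\int_0^\infty\frac{\diff}{\diff t}\tau(h(F_t))\,\diff t$ and expressing the $t$-derivative through the Stein kernel $e^{-2t}A+(1-e^{-2t})1_{\mathcal{A}}\otimes 1_{\mathcal{A}}$ of $F_t$ (Lemma~\ref{lemmaSigma}); this eliminates the feedback term entirely, the factor $\tfrac12$ comes from $\int_0^\infty e^{-2t}\,\diff t$, and the factor $\sqrt{3/4}$ comes from an exact $L^2$ computation of the KNPS kernel $\tfrac12\int_{\mathbb{R}_+}(\nabla_tF)\sharp(\nabla_tF)^*\,\diff t$ in the second chaos, using all of its chaos components with their precise weights rather than a single Cauchy--Schwarz estimate.
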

Given $n\geq 1$, $f\in L^2(\mathbb{R}_+^n)$ is called \emph{(fully) symmetric} if $f$ is real-valued and, for any permutation $\sigma$ of $\{1,\ldots,n\}$, $f=f_\sigma$ in $L^2(\mathbb{R}_+^n)$, where $f_\sigma(t_1,\ldots,t_n)=f(t_{\sigma(1)},\ldots,t_{\sigma(n)})$.  A fully symmetric function is mirror symmetric, however, the set of element $I_n(f)$ where $f$ is fully symmetric is a strict subset of the self-adjoint elements of $\mathcal{H}_n$. By restricting themselves to the case of fully symmetric kernels,  Bourguin and Campese prove the following quantitative bound.
\begin{prop}[Corollary 3.8 of \cite{BC2017}]Let $n\geq 2$, and let $F$ be a self-adjoint element in the  homogeneous Wigner chaos $\mathcal{H}_n$, which can be written $F=I_n(f)$ for a fully symmetric $f\in L^2(\mathbb{R}_+^n)$, and such that $\tau(F^2)=1$. Then,
$$d_{\mathcal{C}_2}(F,S)  \leq \sqrt{C_n}\left(\tau(F^4)-2\right)^{1/2}$$
for a constant $C_n$ which grows asymptotically linearly with $n$.
\end{prop}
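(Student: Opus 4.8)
The estimate will follow the free Stein/Malliavin scheme of \cite{KNPS2012}. Write $F=I_n(f)$ with $f\in L^2(\mathbb{R}_+^n)$ fully symmetric and $\|f\|_{L^2(\mathbb{R}_+^n)}^2=\tau(F^2)=1$, and set
$$\tilde A:=\frac{1}{n}\int_{\mathbb{R}_+}(\nabla_tF)\cdot(\nabla_tF)^*\,\diff t\ \in\ \mathcal{P}_{2n}\otimes\mathcal{P}_{2n},$$
which is a free Stein kernel of $F$, as recalled after Corollary~\ref{corStein} (see \cite[Eq.~(4.16)]{KNPS2012}). The proof then splits into two independent parts: (A) a Stein inequality bounding $d_{\mathcal{C}_2}(F,S)$ by a constant multiple of $\|\tilde A-1_{\mathcal{A}}\otimes1_{\mathcal{A}}\|_{L^2(\mathcal{A},\tau)\otimes L^2(\mathcal{A},\tau)}$, and (B) the estimate $\|\tilde A-1_{\mathcal{A}}\otimes1_{\mathcal{A}}\|_{L^2\otimes L^2}^2\le C_n(\tau(F^4)-2)$ for fully symmetric $f$, with $C_n$ growing linearly in $n$. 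One cannot deduce the statement merely from Theorem~\ref{introth} and the fact that $d_{\mathcal{C}_2}$ is dominated by $W_2$ (mentioned in the introduction): that route only yields a bound of order $(\tau(F^4)-2)^{1/4}$, which is weaker when the fourth-moment gap is small.

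\textbf{Step A: the Stein inequality.} I claim $d_{\mathcal{C}_2}(F,S)\le c\,\|A-1_{\mathcal{A}}\otimes1_{\mathcal{A}}\|_{L^2\otimes L^2}$ for any free Stein kernel $A$ of $F$, with $c$ an absolute constant; the argument uses only that $F$ is bounded with $\tau(F)=0$ and $\tau(F^2)=1$. Fix $h\in\mathcal{C}_2$, $h=\widehat\mu$, with $\mathcal{I}_2(h)=\int_{\mathbb{R}}\xi^2|\mu|(\diff\xi)\le1$; this forces $\|h''\|_\infty\le\mathcal{I}_2(h)\le1$, which is exactly why $\mathcal{C}_2$ is normalised through $\mathcal{I}_2$. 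The semicircular law is characterised by the Schwinger--Dyson equation, and the associated Stein equation $\mathcal{L}\psi_h=h-\tau(h(S))$ — with $\mathcal{L}$ the generator of the free Ornstein--Uhlenbeck semigroup, which acts as $-n\cdot\id$ on $\mathcal{H}_n$ — admits a solution $\psi_h$ whose relevant two-variable difference quotient is bounded in operator norm by a constant multiple of $\mathcal{I}_2(h)$. Writing $\tau(h(F))-\tau(h(S))=\tau(\mathcal{L}\psi_h(F))$ and re-expressing the right-hand side through the Schwinger--Dyson equation together with the defining identity of the Stein kernel $A$, one arrives, after a Cauchy--Schwarz step that uses $\tau(F^2)=1$, at $|\tau(h(F))-\tau(h(S))|\le c\,\|A-1_{\mathcal{A}}\otimes1_{\mathcal{A}}\|_{L^2\otimes L^2}$; taking the supremum over $h$ proves the claim. (Since $h$ and $\psi_h$ are not polynomials, the Schwinger--Dyson and Stein-kernel identities must first be extended from $\mathbb{C}[X]$ to bounded smooth functions by approximation, which is legitimate because $F$ and $S$ are bounded.)

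\textbf{Step B: the contraction estimate.} For fully symmetric $f$ one has $\|\tilde A-1_{\mathcal{A}}\otimes1_{\mathcal{A}}\|_{L^2\otimes L^2}^2\le C_n(\tau(F^4)-2)$ with $C_n=O(n)$; this is \cite[Theorem 3.7]{BC2017}, quoted in the discussion after Proposition~\ref{propFourthmoment}, and I take it as given. Its proof runs parallel to that of Proposition~\ref{propFourthmoment}: expand $\nabla_tF=\sum_{k=1}^n(I_{k-1}\otimes I_{n-k})(f^k_t)$; use the full symmetry of $f$ to identify all the kernels $f^k_t$ with a single fully symmetric function of $n-1$ variables; evaluate $\tilde A$ via the product formula of Proposition~\ref{productofI}; and discard the $\mathcal{H}_0\otimes\mathcal{H}_0$-component, which equals $\tau(F^2)\cdot1_{\mathcal{A}}\otimes1_{\mathcal{A}}=1_{\mathcal{A}}\otimes1_{\mathcal{A}}$. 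After integration in $t$, each surviving term is a pairing integral of a tensor of symmetrised contractions of $f$; Proposition~\ref{propJanson} bounds it by some $\|f\overset{p}{\frown}f^*\|_{L^2}^2$, and summing the $O(n^2)$ terms grouped by contraction order, together with the identity $\sum_{p=1}^{n-1}\|f\overset{p}{\frown}f^*\|_{L^2}^2=\tau(F^4)-2$ of Proposition~\ref{propfourth}, gives the bound with $C_n=O(n)$. Full symmetry is essential: as explained after Proposition~\ref{propFourthmoment}, for a general mirror-symmetric $f$ the contraction norms enter only to the first power — which is precisely why the proof of Proposition~\ref{propFourthmoment} reaches only $(\tau(F^4)-2)^{1/2}$ — and the inequality of this step then fails.

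Combining Step A (applied to $A=\tilde A$) with Step B,
$$d_{\mathcal{C}_2}(F,S)\ \le\ c\,\|\tilde A-1_{\mathcal{A}}\otimes1_{\mathcal{A}}\|_{L^2\otimes L^2}\ \le\ c\sqrt{C_n}\,(\tau(F^4)-2)^{1/2},$$
which is the assertion once $c^2C_n$ is renamed $C_n$ (still linear in $n$). I expect the main obstacle to be Step A: constructing the solution $\psi_h$ of the free semicircular Stein equation and establishing the second-order regularity bound matching the $\mathcal{I}_2$-normalisation of $\mathcal{C}_2$; Step B is quoted from \cite{BC2017}, and everything else is bookkeeping.
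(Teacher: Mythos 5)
This statement is quoted verbatim from \cite{BC2017} (it is their Corollary 3.8), and the paper under review gives no proof of it --- it is stated purely as background before Theorem~\ref{mainth} --- so there is no internal proof to compare against; what you have written is a reconstruction of the external argument. As such it is correct in outline and follows the same two-step scheme as the source: a Stein-type bound on $d_{\mathcal{C}_2}$ in terms of the kernel discrepancy, plus the contraction estimate for fully symmetric $f$. Two remarks. First, the step you flag as the main obstacle (Step A, constructing $\psi_h$ and its second-order regularity) is not actually needed in that form: in \cite{BC2017} it is exactly \cite[Theorem 1.10]{KNPS2012}, which gives $d_{\mathcal{C}_2}(F,S)\le\frac{1}{2}\|\tilde A-1_{\mathcal{A}}\otimes1_{\mathcal{A}}\|_{L^2\otimes L^2}$ for the specific kernel $\tilde A=\frac1n\int\nabla_tF\sharp(\nabla_tF)^*\,\diff t$, and within the present paper the chain Lemma~\ref{prop:W} ($d_{\mathcal{C}_2}\le W_2$), Proposition~\ref{propWS} ($W_2\le\Sigma^*$), and the observation after Corollary~\ref{corStein} that $\tilde A$ is a free Stein kernel already yield $d_{\mathcal{C}_2}(F,S)\le\|\tilde A-1_{\mathcal{A}}\otimes1_{\mathcal{A}}\|_{L^2\otimes L^2}$ with constant $1$, no Stein equation required. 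Second, Step B is where the entire content of the cited corollary lives, and you take it as given from \cite[Theorem 3.7]{BC2017}; your sketch of it (full symmetry collapses the $f^k_t$ to a single kernel, the off-diagonal terms are controlled by \emph{squares} of contraction norms, hence the exponent $1/2$ rather than $1/4$) correctly identifies why the hypothesis of full symmetry is essential and why the paper's own Proposition~\ref{propFourthmoment} only reaches the fourth root in general. So the proposal is a fair outline, but the only genuinely nontrivial ingredient is quoted rather than proved; if you want a self-contained argument, Step B is the part you would actually have to write out.
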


In fact, the following theorem shows that it is possible to control the distance $d_{\mathcal{C}_2}(F,S)$, and even the Wasserstein distance $W_2(F,S)$ of every self-adjoint element of a homogeneous Wigner chaos.
\begin{theorem}\label{mainth}Let $n\geq 2$, and let $F$ be a self-adjoint element in the  homogeneous Wigner chaos $\mathcal{H}_n$ such that $\tau(F^2)=1$. Then,
$$d_{\mathcal{C}_2}(F,S)\leq W_2(F,S)  \leq \Sigma^*(F|S) \leq n^{3/4}\left(\tau(F^4)-2\right)^{1/4}.$$
\end{theorem}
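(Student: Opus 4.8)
The plan is to obtain Theorem~\ref{mainth} as a short assembly: it is the end-of-paper summary, and every inequality in the displayed chain has already been put in place, so the only real task is to check that the hypotheses of the quoted results line up with those on $F$.

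First I would record the trivial but essential observation that, since $n\geq 2$ and $\mathcal{H}_n$ is orthogonal to $\mathcal{H}_0=\mathbb{C}\cdot 1_{\mathcal{A}}$, a self-adjoint $F\in\mathcal{H}_n$ automatically has $\tau(F)=0$. Hence Corollary~\ref{corStein} applies and exhibits an explicit free Stein kernel of $F$ lying in $\mathcal{P}_{2n}$; in particular $\Sigma^*(F|S)<+\infty$, and Proposition~\ref{propFourthmoment} is applicable to $F$. I would also note that $S$, being a standard semicircular variable, satisfies $\tau(S)=0$, $\tau(S^2)=1$ and $\tau(S^4)=2$, so the normalisation $\tau(F^2)=1$ is consistent with every statement being invoked, and that for Proposition~\ref{propWS} we are free to assume (after enlarging $(\mathcal{A},\tau)$, which changes no distribution) that the standard semicircular $S$ is free from $F$.

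The proof then consists of chaining three inequalities. The leftmost inequality $d_{\mathcal{C}_2}(F,S)\leq W_2(F,S)$ is the general comparison of the $\mathcal{C}_2$-distance with the $2$-Wasserstein distance, namely Lemma~\ref{prop:W} (this is a purely classical estimate about distributions on $\mathbb{R}$ and does not involve the free structure). The middle inequality $W_2(F,S)\leq\Sigma^*(F|S)$ is exactly Proposition~\ref{propWS}, the free WS inequality. The rightmost inequality $\Sigma^*(F|S)\leq n^{3/4}\bigl(\tau(F^4)-2\bigr)^{1/4}$ is the second assertion of Proposition~\ref{propFourthmoment}. Concatenating the three gives the theorem.

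I do not expect any obstacle at this assembly stage: the substantive difficulty has already been overcome, in Proposition~\ref{propFourthmoment}, whose proof is the combinatorial core of the paper — expanding $(\id\otimes\tau)(\nabla_tF)\cdot(\nabla_tF)^*$ by the product formula of Proposition~\ref{productofI}, using mirror symmetry $f=f^*$ and integrating out the shared variables to rewrite each integrated contraction's squared norm as a pairing integral of $\bigl(f\overset{n-p-1}{\frown}f\bigr)\otimes f\otimes f$, bounding it by $\bigl\|f\overset{n-p-1}{\frown}f\bigr\|_{L^2}$ via Proposition~\ref{propJanson}, and finally summing the $O(n^2)$ terms while controlling $\sum_i\|f\overset{i}{\frown}f\|_{L^2}$ by $\sqrt{n}\,(\tau(F^4)-2)^{1/2}$ through Proposition~\ref{propfourth} and Cauchy--Schwarz. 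The only points demanding a moment's care in the present proof are the automatic vanishing $\tau(F)=0$ noted above (so that Corollary~\ref{corStein} and Proposition~\ref{propFourthmoment} legitimately apply) and the fact that $d_{\mathcal{C}_2}\leq W_2$ requires no free-probabilistic input and is quoted with the same normalisation.
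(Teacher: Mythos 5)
Your proposal is correct and follows exactly the paper's own proof: the theorem is assembled by concatenating Lemma~\ref{prop:W}, Proposition~\ref{propWS} and Proposition~\ref{propFourthmoment}. Your additional remarks (that $\tau(F)=0$ holds automatically since $\mathcal{H}_n\perp\mathcal{H}_0$ for $n\geq 2$, and that the normalisation hypotheses of Lemma~\ref{prop:W} are met) are accurate checks that the paper leaves implicit.
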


\begin{proof}Thanks to Proposition~\ref{propWS} and Proposition~\ref{propFourthmoment}, it only remains to prove that $d_{\mathcal{C}_2}(F,S)\leq W_2(F,S)$. It is a consequence of Lemma~\ref{prop:W}.
\end{proof}

\begin{lemma}For all self-adjoint random variables $A$ and $B\in \mathcal{A}$ and all function $f$ twice differentiable, we have\label{prop:W}
$$|\tau(f(A))-\tau(f(B))|\leq |\tau(A)-\tau(B)|\cdot |f'(0)|+ \frac{1}{2}(\tau(A^2)^{1/2}+\tau(B^2)^{1/2})W_2(A,B)\cdot \|f''\|_\infty.$$
In particular, if $\tau(A)=\tau(B)=0$ and $\tau(A^2)=\tau(B^2)=1$, we have
$$d_{\mathcal{C}_2}(A,B)\leq W_2(A,B).$$
\end{lemma}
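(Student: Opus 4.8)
The plan is to reduce the statement to a classical computation on $\mathbb{R}$. Since $A$ and $B$ are bounded self-adjoint elements of $\mathcal{A}$, their distributions $\mu_A$ and $\mu_B$ are compactly supported probability measures on $\mathbb{R}$, we have $\tau(f(A)) = \int_{\mathbb{R}} f\,\diff\mu_A$ and $\tau(f(B)) = \int_{\mathbb{R}} f\,\diff\mu_B$ for every twice differentiable $f$ (only the values of $f$ on the compact supports of $\mu_A$ and $\mu_B$ matter), and, by definition, $W_2(A,B)^2 = \inf_{\pi \in \Pi(A,B)} \int_{\mathbb{R}^2} |x-y|^2\,\diff\pi(x,y)$. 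One may assume $\|f''\|_\infty < +\infty$, otherwise there is nothing to prove.

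First I would establish the pointwise inequality $|f(x) - f(y) - f'(0)(x-y)| \leq \tfrac12\,\|f''\|_\infty\,|x-y|\,(|x|+|y|)$ for all $x,y \in \mathbb{R}$. Writing $f(x) - f(y) - f'(0)(x-y) = \int_0^1 \bigl(f'(y + t(x-y)) - f'(0)\bigr)(x-y)\,\diff t$, then using $|f'(u) - f'(0)| \leq |u|\,\|f''\|_\infty$ at the point $u = (1-t)y + tx$ together with $|u| \leq (1-t)|y| + t|x|$, and finally integrating in $t$, yields the claim.

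Next I would integrate this estimate against an almost optimal coupling. Given $\varepsilon > 0$, pick $\pi \in \Pi(A,B)$ with $\int_{\mathbb{R}^2} |x-y|^2\,\diff\pi \leq W_2(A,B)^2 + \varepsilon$ (working with an $\varepsilon$-optimal coupling avoids invoking the existence of an optimal one). Since the marginals of $\pi$ give $\int x\,\diff\pi = \tau(A)$ and $\int y\,\diff\pi = \tau(B)$, integrating the pointwise bound produces $\bigl|\tau(f(A)) - \tau(f(B)) - f'(0)\bigl(\tau(A) - \tau(B)\bigr)\bigr| \leq \tfrac12\,\|f''\|_\infty \int |x-y|\,(|x|+|y|)\,\diff\pi$. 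The Cauchy--Schwarz inequality in $L^2(\pi)$ bounds the last integral by $\bigl(\int |x-y|^2\,\diff\pi\bigr)^{1/2}\bigl(\int (|x|+|y|)^2\,\diff\pi\bigr)^{1/2}$, and Minkowski's inequality in $L^2(\pi)$ bounds the second factor by $\tau(A^2)^{1/2} + \tau(B^2)^{1/2}$. Letting $\varepsilon \to 0$ and then moving the $f'(0)$ term across using the triangle inequality gives the first displayed inequality. For the second assertion: when $\tau(A) = \tau(B) = 0$ and $\tau(A^2) = \tau(B^2) = 1$ the first inequality becomes $|\tau(f(A)) - \tau(f(B))| \leq \|f''\|_\infty\,W_2(A,B)$; and for any $h \in \mathcal{C}_2$, writing $h$ as the Fourier transform of a complex measure $\mu$, the function $h''$ is, up to a sign, the Fourier transform of $x^2\,\mu$, whence $\|h''\|_\infty \leq \int_{\mathbb{R}} x^2\,|\mu|(\diff x) = \mathcal{I}_2(h)$. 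Taking the supremum over $h \in \mathcal{C}_2$ with $\mathcal{I}_2(h) \leq 1$ gives $d_{\mathcal{C}_2}(A,B) \leq W_2(A,B)$.

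There is no serious obstacle here; the only points deserving a little care are checking that $\tau(f(A)) = \int f\,\diff\mu_A$ is legitimate for a merely $C^2$ function $f$ (which is immediate since $\mu_A$ has compact support, so $f$ may as well be replaced by a bounded function agreeing with it on that support) and avoiding any appeal to the existence of an optimal coupling, which is handled by the $\varepsilon$-optimal choice above. If $W_2(A,B) = +\infty$ --- which does not happen here, as $A$ and $B$ are bounded --- the statement is vacuous.
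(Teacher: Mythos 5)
Your proof is correct and follows essentially the same route as the paper: a first-order Taylor bound giving the pointwise estimate $|f(x)-f(y)-f'(0)(x-y)|\leq\tfrac12\|f''\|_\infty|x-y|(|x|+|y|)$, integration against a (near-)optimal coupling, Cauchy--Schwarz together with the marginal conditions, and the observation that $\|h''\|_\infty\leq\mathcal{I}_2(h)$ for $h\in\mathcal{C}_2$. The only cosmetic differences are your parametrization of the Taylor remainder and the use of $\varepsilon$-optimal couplings in place of minimizing over $\pi$ at the end.
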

\begin{proof}Because the functions $h$ of $\mathcal{C}_2$ such that  $\mathcal{I}_2(h)\leq 1$ are twice differentiable with $\|h''\|_\infty\leq 1$, the second inequality is a consequence of the first inequality. Let us assume that $\|f''\|_\infty<+\infty$. We rewrite
\begin{align*}
f(x)-f(y)&=\int_y^xf'(t) \diff t\\
&=\int_y^x(f'(0)+\int_0^tf''(s)\diff s) \diff t\\
&=(x-y)f'(0)+\int_y^x\int_0^tf''(s)\ \diff s\ \diff t.
\end{align*}
For all $\pi \in \Pi(A,B)$, we can integrate the above equality and get
$$\tau(f(A))-\tau(f(B))=(\tau(A)-\tau(B))f'(0)+\int_{\mathbb{R}^2}\int_y^x\int_0^tf''(s)\ \diff s\ \diff t\ \diff \pi(x,y).$$
Taking the absolute value yields
$$|\tau(f(A))-\tau(f(B))|\leq |\tau(A)-\tau(B)|\cdot |f'(0)|+\|f''\|_\infty \int_{\mathbb{R}^2}\int_{x\wedge y}^{x \vee y} |t|\ \diff t\ \diff \pi(x,y).$$
Let us remark that $|x\wedge y|\cdot x \vee y-|x \vee y|\cdot x\wedge y\geq 0$, which allows to compute
\begin{align*}
\int_{x\wedge y}^{x \vee y} |t|\diff t=&\frac{1}{2}(|x\vee  y|\cdot x\vee  y-|x\wedge y|\cdot x\wedge y)\\
\leq& \frac{1}{2}(|x\vee  y|\cdot x \vee y-|x\wedge y|\cdot x\wedge y+|x\wedge y|\cdot x \vee y-|x \vee y|\cdot x\wedge y)\\
&=\frac{1}{2}(|x\vee  y|+|x\wedge  y|)(x\vee  y-x\wedge  y)\\
&=\frac{1}{2}(|x|+|y|)|x-  y|.
\end{align*}
Therefore, by using Hölder's inequality,
\begin{align*}
&\int_{\mathbb{R}^2}\int_{x\wedge y}^{x \vee y} |t|\ \diff t\ \diff \pi(x,y)\leq \frac{1}{2}\int_{\mathbb{R}^2}(|x|+|y|)\cdot|x-  y|\diff \pi(x,y)\\
&\leq \frac{1}{2}\left(\int_{\mathbb{R}^2}|x|^2\diff \pi(x,y)\int_{\mathbb{R}^2}|x-y|^2\diff \pi(x,y)\right)^{1/2}+\frac{1}{2}\left(\int_{\mathbb{R}^2}|y|^2\diff \pi(x,y)\int_{\mathbb{R}^2}|x-y|^2\diff \pi(x,y)\right)^{1/2}\\
&= \frac{1}{2}(\tau(A^2)^{1/2}+\tau(B^2)^{1/2})\left(\int_{\mathbb{R}^2}|x-y|^2\diff \pi(x,y)\right)^{1/2}.
\end{align*}
Finally, $|\tau(f(X))-\tau(f(Y))|$ is bounded by
$$ |\tau(A)-\tau(B)|\cdot |f'(0)|+\frac{1}{2}(\tau(A^2)^{1/2}+\tau(B^2)^{1/2})\left(\int_{\mathbb{R}^2}|x-y|^2\diff \pi(x,y)\right)^{1/2}\|f''\|_\infty.$$
and we conclude by minimizing over $\pi\in \Pi(A,B)$.
\end{proof}

\subsection*{Acknowledgements} The author wishes to thank Tobias Mai and Roland Speicher for plenty of useful discussions, and in particular Tobias Mai for introducing him to this problem and motivating him to establish Proposition~\ref{propWS}.

\bibliographystyle{amsalpha}
\bibliography{Stein_kernel_in_Wigner_chaos}

\end{document}